\documentclass[11pt,twoside]{article}
\usepackage{amsmath,amssymb,amsthm}
\usepackage{enumerate}
\usepackage{cite}
\usepackage{latexsym,bm}
\usepackage{hyperref} %点击跳转参考文献，参考文献跳原文：\usepackage[backref]{hyperref}
\usepackage{bm}%点击跳转到目标方程
\usepackage{authblk}%作者，单位，邮箱宏包
\usepackage{indentfirst} %首行缩进宏包
\usepackage{extarrows}%识别长等号
\usepackage{CJK} % CJK 中文支持
\usepackage{mathrsfs}%\mathscr{}的宏包
\usepackage{graphicx} % 导入图像的宏包、单图
\usepackage{subfigure} % 导入图像的宏包、子图
\usepackage{float}%浮动格式

\allowdisplaybreaks

\allowdisplaybreaks

\theoremstyle{plain}
\newtheorem{assumption}{Assumption}
\newtheorem{theorem}{Theorem}[section]
\newtheorem{proposition}{Proposition}[section]
\newtheorem{lemma}{Lemma}[section]

\theoremstyle{definition}
\newtheorem{definition}[theorem]{Definition}

\theoremstyle{remark}
\newtheorem{remark}{Remark}[section]

\numberwithin{equation}{section}

\title{ Weak and strong averaging principle for 2D Boussinesq equations with non-Lipschitz Poisson jump noise}

\author[1]{Yangyang Shi\thanks{shiyy@chnu.edu.cn}}
\author[2]{Dong Su\thanks{dsu224466@163.com}}
\author[3]{Hui Liu\thanks{Corresponding author, ss\_liuhui@ujn.edu.cn}}
\affil[1]{School of Mathematics and Statistics, Huaibei Normal University, Huaibei 235000, P.R. China}
\affil[2]{School of Mathematics, Nanjing Audit University, Nanjing 211815, P.R. China}
\affil[3]{School of Mathematical Sciences, University of Jinan, Jinan 250022, P.R. China}
\begin{document}

\date{}

\maketitle
\thispagestyle{empty}

\begin{abstract}
In this paper, we study the averaging principle for 2D Boussinesq equations with non-Lipschitz Poisson jump noise. Precisely, we will first explore the well-posedness, regularity estimates and tightness of the vorticity variable. Then, we prove the ergodicity of the temperature variable. Next, we prove that the vorticity variable converge to the solution of the averaged equation in probability and $2p$th-mean, under different conditions, as time scale parameter $\varepsilon$ goes to zero. Finally, we present a specific case study and conduct numerical simulations to substantiate the main conclusions of this paper.

\end{abstract}

\noindent
\textbf{Keywords:~~} Boussinesq equations, averaging principle, Poisson jump noise, non-Lipschitz.\\
\noindent
\textbf{2010 Mathematical Subject Classification~~} 37N10, 70K65, 70K70.

%% -------------------------------------------------------------------

\section{Introduction}\label{s1}
 In this paper, we refer to the Boussinesq equations on two dimensional torus $\mathbb{T}^2=[0,2\pi]\times [0,2\pi]$ by 
 \begin{equation}\label{e1}
 \left\{
\begin{aligned}%对齐
&\frac{\partial u}{\partial t}+(u\cdot \triangledown )u=\triangle u-\triangledown p+\theta e_2,\\
&\frac{\partial\theta}{\partial t}+(u\cdot \triangledown )\theta=\triangle\theta,\\
&\triangledown\cdot u=0,\\
&u(0,x,y)=u_0(x,y),\ \theta(0,x,y)=\theta_0(x,y),
\end{aligned}
\right.
\end{equation}
with periodic boundary value condition in space. Where $u(t,x,y)=(u_1,u_2)$ represents the velocity vector, $p=p(t,x,y)$ is the scalar pressure, variable $\theta(t,x,y)$ represents the scalar temperature and vector $e_2=(0,1)$.

Since the thermal diffusivity determines the rate at which heat is transferred within a fluid, particularly when the Prandtl number (Pr) or Lewis number (Le) of the fluid is significantly less than 1, the thermal diffusivity is substantially greater than both the momentum diffusivity and solute diffusivity \cite{Zaussinger}.
 
Applying a curl operator on the first equation of (\ref{e1}),  we shall consider the following two-scale stochastic Boussinesq equations driven by Poisson jump noise
 \begin{equation}\label{e2}
 \left\{
\begin{aligned}%对齐
&\frac{\partial j^{\varepsilon}}{\partial t}+(u^{\varepsilon}\cdot \triangledown )j^{\varepsilon}=\triangle j^{\varepsilon}+f(j^{\varepsilon},\theta^{\varepsilon})+\partial_x\theta^{\varepsilon}+\int_{\left|z\right|_{H}<1}\sigma_1(j^{\varepsilon},\theta^{\varepsilon},z)\dot{\tilde{\eta} }_1(t,dz) ,\\
&\frac{\partial\theta^{\varepsilon}}{\partial t}+\frac{1}{\varepsilon}(u^{\varepsilon}\cdot \triangledown )\theta^{\varepsilon}=\frac{1}{\varepsilon}\triangle\theta^{\varepsilon}+\int_{\left|z\right|_{H}<1}\sigma_2(j^{\varepsilon},\theta^{\varepsilon},z)\dot{\tilde{\eta} }_2^{\varepsilon}(t,dz),\\
&j^{\varepsilon}(0)=j_0,\ \theta^{\varepsilon}(0)=\theta_0,
\end{aligned}
\right.
\end{equation}
where $j^{\varepsilon}:=\triangledown^{\perp }\cdot u^{\varepsilon}=\partial_x u^{\varepsilon}_2-\partial_y u^{\varepsilon}_1$ is the vorticity, small parameter $0<\varepsilon \ll 1$ describing the ratio of the time scale between $j^{\varepsilon}$ and $\theta^{\varepsilon}$. The compensated Poisson random measures $\tilde{\eta}_1(t,z)=\eta_1(t,z)-\nu_1(z)\cdot t$ and $\tilde{\eta}^{\varepsilon}_2(t,z)=\eta_2(\frac{1}{\varepsilon}t,z)-\frac{1}{\varepsilon}\nu_2(z)\cdot t$  on a measurable space $(H,\mathscr{B}(H))$, and intensity measures $\nu_i(i=1,2)$ defined over the filtered probability space $(\Omega,\mathscr{F},\left\{\mathscr{F}_t\right\}_{t\geq 0},\mathbb{P})$.

As a mathematical model describing the laws governing fluid flow and heat transport under specific approximation conditions, the Boussinesq equations finds extensive application in fluid mechanics\cite{Thomas,Ahmet}, environmental hydrology\cite{Farlin,Knight},  engineering disciplines\cite{Yang} and so on. 

The study of stochastic Boussinesq equations has attracted considerable attention over the past two decades, as these equations provide important models for understanding fluid dynamics under random perturbations. The mathematical analysis of these stochastic systems has progressed significantly in several directions. The foundational question of existence and uniqueness of solutions has been extensively investigated. Pu and Guo \cite{Pu} established global well-posedness for the stochastic 2D Boussinesq equations with partial viscosity. Du and Zhang \cite{Lihuai} obtained local and global existence of pathwise solutions with multiplicative noise, while Alonso-Or\'{a}n et al. \cite{Diego} studied well-posedness in the presence of transport noise. The 3D case was addressed by Sun et al. \cite{Jinyi}, who established global existence of mild solutions in Besov spaces. Wu and Huang \cite{Shang} further investigated well-posedness for fractional Boussinesq equations with nonlinear noise. Lin et al. \cite{Quyuan} established global existence with transport noise and small rough data. In addition, The limiting regime with vanishing viscosity has received special attention. Yamazaki \cite{Yamazaki} constructed global martingale solutions for the stochastic Boussinesq system with zero dissipation. Luo \cite{Dejun} studied the convergence of stochastic 2D inviscid Boussinesq equations with transport noise to deterministic viscous systems. 

The theory of large deviations provides insights into the probability of rare events in stochastic Boussinesq systems. Duan and Millet \cite{Millet} established large deviation principles for Boussinesq equations under random influences. Sun et al. \cite{Chengfeng} investigated rare events with fluctuating dynamical boundary conditions. Zheng and Huang \cite{Yan} later extended large deviation results to systems driven by L\'{e}vy noise. Furthermore, understanding the statistical properties of solutions over long time scales is crucial. Huang et al. \cite{Jianhua} studied asymptotic properties of 2D stochastic fractional Boussinesq equations with degenerate noise. Dai et al. \cite{Haoran} analyzed asymptotic behavior for partial dissipative systems with memory effects.

The averaging principle as a crucial tool for handling multi-scale systems and finds significant applications across multiple fields, as detailed in \cite{Bertram,Weinan,Schutter} and there references. The averaging principle for stochastic differential equations(SDEs) was first proposed by Khasminskii in \cite{Khasminskii1,Khasminskii2}, and since then, substantial research progress has been made in this field. Previously, research on the averaging principle has largely focused on abstract systems. For findings concerning the averaging principle in abstract SDEs and SPDEs, such as \cite{Cerrai2,Duan1,Golec,Di,Liu1,Pavliotis,Pei1,Guo} and references therein.

Moreover, the averaging principle has extensive applications in physical models. Majda et al. employed the principle of averaging to address stochastic climate models in \cite{Majda}, thereby extracting effective reduced equations from the original model. Arnold et al. investigated the Lorenz-Maas system via the  averagingprinciple and conducted effective numerical simulations, see \cite{Arnold}. Yeong et al. applied the averaging principle to investigate particle filters with thrust control in multiscale chaotic systems; see \cite{Yeong} for details. In addition, Gao has achieved a series of outstanding results in the averaging of stochastic fluid equations, see for example \cite{Gao1,Gao2,Gao3}.

However, to this day, research findings on multi-scale Boussinesq equations remain scarce. Zhang et al. investigated the averaging of fast waves in moist atmospheric dynamics, see \cite{Yeyu}. Du et al. considered the averaging principle for the random Boussinesq equations when the Froude number is small, refer to \cite{Lihuai1}.

In existing results, fast variables are generally required to satisfy an abstract reaction-diffusion equation, which is different from our model (\ref{e2}). The presence of convective terms imposes numerous difficulties in handling the frozen equation and in determining the properties of the coefficients in averaged equations.

The paper is structured as follows. In the subsequent section, we introduce necessary notations and delineate the assumptions pertaining to each coefficient in equation (\ref{e2}). In Section \ref{s3}, we address the existence and uniqueness of solutions to equation (\ref{e2}) under non-Lipschitz conditions. Moreover, we investigate regularity estimates for vorticity variable, thereby obtaining their tightness with respect to $\varepsilon$. Section \ref{s4} specifically examines the ergodicity of frozen equations for temperature variables.  In Section \ref{s5}, we demonstrate that, under varying condition, the vorticity variable can converges to the solution of the averaged equations both in probability and 2pth-mean, as $\varepsilon$ approaches zero. The final section features an illustrative example accompanied by numerical simulations.
%% -------------------------------------------------------------------
\section{Notations and assumptions}\label{s2}

\subsection{Notations}

For any $p\geq 1$, let $L^p(\mathbb{T}^2)$ denote the space of $p$-order integrable functions on $\mathbb{T}^2$ with norm
\begin{align*}
	\left|f\right|_{L^p(\mathbb{T}^2)}=\left(\int_{\mathbb{T}^2}\left|f(x,y)\right|^pdxdy\right)^{\frac{1}{p}}.
\end{align*}
Particularly, let 
\begin{align*}
	H:=\left\{f\in L^2(\mathbb{T}^2);\int_{\mathbb{T}^2}fdxdy=0\right\}
\end{align*}
with inner product $(\cdot,\cdot)$.

For any $\alpha>0$, let $H^{\alpha}$ be the Sobolev space of functions on $\mathbb{T}^2$ with $\alpha$-order generalized derivative in $H$, and endowed with the norm 
\begin{align*}
	\left|u\right|_{H^{\alpha}}=\left|u\right|_H+\left|(-\triangle)^{\frac{\alpha}{2}}u\right|_H.
\end{align*}
Moreover, $\left \langle \cdot,\cdot  \right \rangle $ denotes the dual pairs of $H^1$ and $H^{-1}$.

$\mathscr{B}_b(H)$ denotes the space of bounded Borel functions $\varphi (x):H \rightarrow  \mathbb{R}$, endowed with the norm
\begin{align*}
|\varphi|_0=\mathop {\sup }\limits_{x \in H}\left | \varphi(x) \right |.
\end{align*}

$C^k_0(\mathbb{T}^2)$ is used to represent the space of kth-order continuous differentiable functions, with zero bounded, up to the kth-order.

\subsection{Assumptions}

We give the assumptions for the coefficients in the equation (\ref{e2}).
\begin{assumption}\label{a1}
$f: H\times H\rightarrow H$, for any $u_1,u_2,v_1,v_2\in H$, there exists a concave continuous non-decreasing function $\kappa(\cdot)$ and some $p\geq 1$ such that
\begin{align*}
	\left|f(u_1,v_1)-f(u_2,v_2)\right|^{2p}_H\leq \kappa\left(\left|u_1-u_2\right|^{2p}_H+\left|v_1-v_2\right|^{2p}_H\right),
\end{align*}
where $\kappa (\cdot ):\mathbb{R}^{+}\rightarrow \mathbb{R}^{+},\kappa (0)=0$, and $\int _{0^{+}}\frac{du}{\kappa (u)}=\infty $. One can choose some constants $C_1,C_2>0$, such that $\kappa (u)\leqslant C_1u+C_2,\ \forall u\geqslant 0$. Hence, we have growth conditions:
\begin{align*}
	\left|f(u,v)\right|^{2p}_H\leq C_1\left(1+\left|u\right|^{2p}_H+\left|v\right|^{2p}_H\right).
\end{align*}
\end{assumption}

\begin{assumption}\label{a2}
$\sigma_1: H\times H\times H\rightarrow H$, for some $p\geq 1$, and for any $u_1,u_2,v_1,v_2\in H$, we have
\begin{align*}
\int_{\left|z\right|_H<1}\left|\sigma_1(u_1,v_1,z)-\sigma_1(u_2,v_2,z)\right|^{2p}_H\nu_1(dz)\leq \kappa\left(\left|u_1-u_2\right|^{2p}_H+\left|v_1-v_2\right|^{2p}_H\right),
\end{align*}
and the growth conditions:
\begin{align*}
\int_{\left|z\right|_H<1}\left|\sigma_1(u,v,z)\right|^{2p}_H\nu_1(dz)\leq C_1\left(1+\left|u\right|^{2p}_H+\left|v\right|^{2p}_H\right).
\end{align*}
\end{assumption}

\begin{assumption}\label{a3}
	$\sigma_2: H\times H\times H\rightarrow H$, for some $p\geq 1$, for any $u_1,u_2,v_1,v_2\in H$, there exists $L_{\sigma_2}>0$ such that 
\begin{align*}
\int_{\left|z\right|_H<1}\left|\sigma_2(u_1,v_1,z)-\sigma_2(u_2,v_2,z)\right|^{2p}_H\nu_2(dz)\leq L_{\sigma_2}\left|v_1-v_2\right|^{2p}_H+\kappa\left(\left|u_1-u_2\right|^{2p}_H\right).
\end{align*}
Therefore, we have growth conditions:
\begin{align*}
\int _{\left | z\right |_H<1}\left |\sigma _2(u,v)\right |^{2p}\nu_2(dz)\leq L_{\sigma_2}\left|v\right|^{2p}_H+C_1\left(1+\left|u\right|^{2p}_H\right).
\end{align*}
\end{assumption}

\begin{assumption}\label{a4}
$\lambda_p :=2p\lambda-M_p\left(\frac{(p-1)}{p}+\frac{L_{\sigma_2}}{p}+L_{\sigma_2}\right)>0$, where $\lambda$ is the smallest eigenvalue of $-\triangle$ in $H$ and $M_p:=2p\cdot(2p-1)\cdot 2^{2p-3}$.
\end{assumption}

\begin{remark}
\begin{enumerate}
	\item For any $x,h\in H$ and $p\geq 1$, we have
\begin{align}\label{e3}
		\left|\left|x+h\right|^{2p}_H-\left|x\right|^{2p}_H-2p\left|x\right|^{2(p-1)}_H\left(x,h\right)\right|\leq M_p\left(\left|x\right|^{2(p-1)}_H\left|h\right|^2_H+\left|h\right|^{2p}_H\right),
\end{align}
and the details can be see in \cite{Brzezniak}.
	\item When $p=1$, $\lambda_p=2\lambda-2L_{\sigma_2}$ is a common dissipative condition.
\end{enumerate}
\end{remark}

%------------------------------------------------------------------------

\section{Well-posedness, regularity estimates and tightness}\label{s3}
In this section, we discuss the well-posedness of equations (\ref{e2}), regularity estimates and the tightness of vorticity variable $j^{\varepsilon}$. 
Moreover, we define the solution of (\ref{e2}) as follow:
\begin{definition}
An $H\times H$-valued c\`{a}dl\`{a}g $\mathscr{F}_t$-adapted process $(j^{\varepsilon},\theta^{\varepsilon})$ is called a solution of (\ref{e2}), if the following conditions are satisfied
\begin{enumerate}
	\item $(j^{\varepsilon},\theta^{\varepsilon})\in L^{\infty}([0,T];H\times H)\cap L^2([0,T];H^1\times H^1)$, $\mathbb{P}$-a.s.;
	\item the following equality holds $\mathbb{P}$-a.s.:
	\begin{align*}
		&j^{\varepsilon}+\int_0^t\left(u^{\varepsilon}\cdot \triangledown\right)j^{\varepsilon}ds=j_0+\int_0^t\triangle j^{\varepsilon}ds+\int_0^tf(j^{\varepsilon},\theta^{\varepsilon})ds+\int_0^t\int_{\left|z\right|_H<1}\sigma_1(j^{\varepsilon},\theta^{\varepsilon},z)\dot{\tilde{\eta}}_1(ds,dz),\\
		&\theta^{\varepsilon}+\frac{1}{\varepsilon}\int_0^t\left(u^{\varepsilon}\cdot \triangledown\right)\theta^{\varepsilon}ds=\theta_0+\frac{1}{\varepsilon}\int_0^t\triangle\theta^{\varepsilon}ds+\int_0^t\int_{\left|z\right|_H<1}\sigma_2(j^{\varepsilon},\theta^{\varepsilon},z)\dot{\tilde{\eta}}^{\varepsilon}_2(ds,dz).
	\end{align*}
\end{enumerate}
\end{definition}
Now, we declare the following result:
\begin{theorem}\label{th1}
Suppose that Assumptions \ref{a1} - \ref{a4} hold, we have for any $(j_0,\theta_0)\in H\times H$, 
\begin{enumerate}
	\item (\ref{e2}) has a unique solution $(j^{\varepsilon},\theta^{\varepsilon})$;
	\item for any $T>0$, and for some $p\geq 1$, we have
\begin{align}\label{e4}
\mathbb{E}\sup_{t\in [0,T]}\left | j^{\varepsilon} \right |^{2p}_H+\mathbb{E}\int_{0}^{T}\left | j^{\varepsilon} \right |^{2(p-1)}_H\left|\triangledown j^{\varepsilon}\right|^2_Hds\leqslant C_{T}\left ( 1+\left | j_0 \right |^{2p}_H+\left | \theta_0\right |^{2p}_H\right ),
\end{align}
and
\begin{align}\label{e5}
\sup_{t\in [0,T]}\mathbb{E}\left | \theta^{\varepsilon} \right |^{2p}_H+\mathbb{E}\int_{0}^{T}\left | \theta^{\varepsilon} \right |^{2(p-1)}_H\left|\triangledown \theta^{\varepsilon}\right|^2_Hds\leqslant C_{T}\left ( 1+\left | j_0 \right |^{2p}_H+\left | \theta_0\right |^{2p}_H\right ).
\end{align}
\end{enumerate}
\end{theorem}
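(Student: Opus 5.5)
We prove the two parts in order: first the a priori estimates (\ref{e4})--(\ref{e5}), then well-posedness. The estimates are derived for the Galerkin approximations $(j^{\varepsilon}_n,\theta^{\varepsilon}_n)$ obtained by projecting (\ref{e2}) onto the span of the first $n$ eigenfunctions of $-\triangle$. They pass to the limit by lower semicontinuity.

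\textbf{A priori estimates.} We apply It\^o's formula for Poisson-driven equations to $|\theta_n|^{2p}_H$ and use (\ref{e3}) to control the jump part.

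\begin{enumerate}
\item The convection term vanishes, $\left((u\cdot\triangledown)\theta,\theta\right)=0$, because $\triangledown\cdot u=0$ and the boundary conditions are periodic. The same cancellation holds for $j$.
\item The dissipation gives $-\frac{2p}{\varepsilon}|\theta|^{2(p-1)}_H|\triangledown\theta|^2_H$, which is at most $-\frac{2p\lambda}{\varepsilon}|\theta|^{2p}_H$ by Poincar\'e's inequality.
\item The compensator of the jump term is bounded by $\frac{M_p}{\varepsilon}\int\left(|\theta|^{2(p-1)}_H|\sigma_2|^2_H+|\sigma_2|^{2p}_H\right)\nu_2(dz)$.
\item We bound this compensator with Young's inequality $|\theta|^{2(p-1)}|\sigma|^2\le\frac{p-1}{p}|\theta|^{2p}+\frac1p|\sigma|^{2p}$ and Assumption \ref{a3}. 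The $\theta$-contributions total $\frac{M_p}{\varepsilon}\left(\frac{p-1}{p}+\frac{L_{\sigma_2}}{p}+L_{\sigma_2}\right)|\theta|^{2p}_H$.
\end{enumerate}

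This is exactly why Assumption \ref{a4} is stated as it is. Taking expectations, we get
\begin{align*}
\frac{d}{dt}\mathbb{E}|\theta|^{2p}_H\le -\frac{\lambda_p}{\varepsilon}\mathbb{E}|\theta|^{2p}_H+\frac{C}{\varepsilon}\left(1+\mathbb{E}|j|^{2p}_H\right).
\end{align*}
A Gronwall comparison then gives $\sup_{t\le T}\mathbb{E}|\theta(t)|^{2p}_H\le|\theta_0|^{2p}_H+C\left(1+\sup_{s\le t}\mathbb{E}|j(s)|^{2p}_H\right)$, uniformly in $\varepsilon$. We keep part of the dissipation on the left to obtain the gradient integral in (\ref{e5}). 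Since we only need $\sup_t\mathbb{E}$ for $\theta$, no BDG estimate is required here.

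For $j$ we apply the same It\^o formula to $|j|^{2p}_H$. Two terms need care.
\begin{enumerate}
\item The term $(\partial_x\theta,j)$ is integrated by parts to $-(\theta,\partial_x j)$. It is then absorbed as $\delta|\triangledown j|^2_H|j|^{2(p-1)}_H+C_\delta|\theta|^2_H|j|^{2(p-1)}_H$, and Young's inequality turns the second piece into $|\theta|^{2p}_H+|j|^{2p}_H$.
\item The forcing $f$ and the compensator of $\sigma_1$ are handled by the growth conditions in Assumptions \ref{a1}--\ref{a2}.
\end{enumerate}
For $\mathbb{E}\sup_t$ we treat the martingale $\int 2p|j|^{2(p-1)}_H(j,\sigma_1)\tilde\eta_1$ with the BDG inequality. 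This gives a bound of the form $\frac12\mathbb{E}\sup|j|^{2p}_H+C\,\mathbb{E}\int\int|\sigma_1|^{2p}_H\,d\nu_1\,ds$, using the jump form of BDG with $[M]^{1/2}\le\left(\sum|\Delta M|^2\right)^{1/2}$ together with Young's inequality. We combine this with the $\theta$ bound integrated over $[0,T]$ and apply Gronwall to obtain (\ref{e4}). Feeding (\ref{e4}) back into the $\theta$ inequality yields (\ref{e5}).

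\textbf{Existence and uniqueness.} Existence follows by the standard route. The Galerkin bounds above are uniform in $n$. We add bounds on increments in $H^{-1}$: the convection term maps $L^\infty H\cap L^2H^1$ into $L^{4/3}H^{-1}$ in 2D, using Ladyzhenskaya's inequality and the Biot--Savart estimate $|u|_{H^1}\le C|j|_H$. These bounds give tightness in $D([0,T];H^{-1})\cap L^2(0,T;H)$. We pass to the limit through Skorokhod's theorem and identify the nonlinear terms using continuity of $\kappa$, which yields a martingale solution. Pathwise uniqueness together with the Yamada--Watanabe theorem (Gy\"ongy--Krylov) then upgrades it to a strong solution.

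\textbf{Uniqueness.} For two solutions we write $\bar j=j^1-j^2$ and $\bar\theta=\theta^1-\theta^2$, and stop at $\tau_R=\inf\{t:|\triangledown j^i|_{L^2(0,t)}+|j^i|_H+|\theta^i|_H>R\}$. It\^o's formula applied to $|\bar j|^{2}_H$ and $|\bar\theta|^2_H$ produces bilinear terms. The first is controlled via Ladyzhenskaya:
\begin{align*}
|(\bar u\cdot\triangledown j^2,\bar j)|\le C|\bar j|_H|\triangledown j^2|_H|\bar j|_H^{1/2}|\triangledown\bar j|_H^{1/2}.
\end{align*}
The term $(\bar u\cdot\triangledown\theta^2,\bar\theta)$ is handled similarly. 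After absorbing the gradients, these terms become $C(1+|\triangledown j^2|^2_H+|\triangledown\theta^2|_H^2)\left(|\bar j|^2_H+|\bar\theta|^2_H\right)$, which we remove with an exponential weight $e^{-C\int_0^t(\cdots)ds}$. The non-Lipschitz parts are bounded through Jensen's inequality for the concave function $\kappa$, giving $\mathbb{E}\,\kappa(\cdot)\le\kappa(\mathbb{E}\,\cdot)$. This leads to $Z(t)\le C\int_0^t\kappa(Z(s))ds$ for $Z=\mathbb{E}\left[e^{-\cdots}\left(|\bar j|^{2p}_H+|\bar\theta|^{2p}_H\right)\right]$, and Bihari's inequality with $\int_{0^+}du/\kappa=\infty$ gives $Z\equiv0$. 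Letting $R\to\infty$ completes uniqueness.

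\textbf{Main obstacle.} The hardest step is combining the non-Lipschitz Bihari argument with the random exponential weight that the convection term forces. Jensen's inequality must be applied under the weighted expectation, and the weight must stay in $[0,1]$ so that $\kappa$ is evaluated on a weighted quantity. Our first choice is to use concavity to write $e^{-A}\kappa(x)\le\kappa(e^{-A}x)+$ a harmless linear term, exploiting $\kappa(0)=0$, which gives $\kappa(x)/x$ nonincreasing.
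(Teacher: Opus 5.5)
Your a priori estimates follow the paper's proof essentially step for step:
\begin{itemize}
\item It\^o's formula for $|\theta^{\varepsilon}|^{2p}_H$ and $|j^{\varepsilon}|^{2p}_H$, with the bound (\ref{e3}) on the jump compensator;
\item Young's inequality producing exactly the constant in Assumption \ref{a4}, keeping a $\gamma$-fraction of the dissipation for the gradient term (i.e.\ $\lambda_{p,\gamma}$);
\item an $\varepsilon$-uniform Gronwall comparison for $\mathbb{E}|\theta^{\varepsilon}|^{2p}_H$, then B-D-G plus Gronwall for $\mathbb{E}\sup_t|j^{\varepsilon}|^{2p}_H$.
\end{itemize}
Closing via $\sup_{s\le t}\mathbb{E}|j^{\varepsilon}_s|^{2p}_H$ is equivalent to the paper's Fubini manipulation of the kernel $\frac{1}{\varepsilon}e^{-\lambda_{p,\gamma}(s-\tau)/\varepsilon}$. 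Existence is the paper's ``standard Galerkin'' route made explicit. For the compactness step you need $\mathbb{E}\sup_t|\theta^{\varepsilon}_n|^2_H$, not only the $\sup_t\mathbb{E}$ bound; it holds with an $\varepsilon$-dependent constant, as in (\ref{xe11}).

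The genuine difference is uniqueness. The paper stops only on $|j_2|_H+|\theta_2|_H$ and is left with $\mathbb{E}\int_0^{\tau_R\wedge T}|j_1-j_2|^2_H|\triangledown j_2|^2_H\,ds$. It bounds this by inserting the energy identity for $|j_2|^2_H$ and applying B-D-G to $\int|j_1-j_2|^2_H\,d|j_2|^2_H$, as if $|j_2|^2_H$ were a martingale. But its drift contains $-2|\triangledown j_2|^2_H\,dt$, the very quantity being estimated. The analogous term $(((u_1-u_2)\cdot\triangledown)\theta_2,\theta_1-\theta_2)$ is dismissed as similar. That route would keep everything in plain expectations without random weights, but as written the step is not justified.

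Your weight $\rho_t=\exp\bigl(-C\int_0^t(1+|\triangledown j_2|^2_H+|\triangledown\theta_2|^2_H)\,ds\bigr)$ absorbs both random, merely $L^1$-in-time coefficients at once. Since $\rho\in(0,1]$ and $\kappa$ is concave with $\kappa(0)=0$, you get $\rho\,\kappa(x)\le\kappa(\rho x)$ directly; no extra linear term is needed. So Jensen applies to $Z(t)=\mathbb{E}[\rho_{t\wedge\tau_R}X_{t\wedge\tau_R}]$, Bihari gives $Z\equiv0$, and $\rho>0$ a.s.\ gives $X\equiv0$. This is the standard rigorous argument and the more robust of the two.

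Two small points remain.
\begin{itemize}
\item You apply It\^o to $|\cdot|^2_H$ but define $Z$ with $2p$-th powers. Since Assumptions \ref{a1}--\ref{a3} are stated only for the given $p$, the $2p$-version is the consistent choice; the paper's $p=1$ computation tacitly uses $p=1$ versions of them.
\item Exactly as in the paper, the step $\int|x|^{2p-2}_H|\sigma|^2_H\,d\nu\le\frac{p-1}{p}|x|^{2p}_H+\frac1p\int|\sigma|^{2p}_H\,d\nu$ tacitly assumes $\nu_i(\{|z|_H<1\})\le1$.
\end{itemize}
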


\begin{proof}
There have been numerous results concerning the well-posedness of  Boussinesq equations. Here, we generalize the well-posedness to the non-Lipschitz noise case. The details are given in the Appendix.

Let's verify (\ref{e4}) and (\ref{e5}). Note that 
\begin{align*}
\left((u\cdot \triangledown)v,w\right)=-\left((u\cdot \triangledown)w,v\right),\ and\  \left((u\cdot \triangledown)v,v\right)=0,\quad for\ any\ u,v,w\in H^1.
\end{align*}
According to It\^{o}'s formula with L\'{e}vy noise (cf. \cite{Justin}), we have

\begin{align*}
	\left|\theta^{\varepsilon}\right|^{2p}_H=&\left|\theta_0\right|^{2p}_H+\frac{2p}{\varepsilon}\int_0^t\left|\theta^{\varepsilon}\right|^{2(p-1)}_H\left \langle \triangle \theta^{\varepsilon},\theta^{\varepsilon} \right \rangle ds+2p\int_0^t\int_{\left|z\right|_H<1}\left|\theta^{\varepsilon}\right|^{2(p-1)}_H\left(\theta^{\varepsilon},\sigma_2(j^{\varepsilon},\theta^{\varepsilon},z)\right)\tilde{\eta}^{\varepsilon}_2(ds,dz)\\
	&+\int_0^t\int_{\left|z\right|_H<1}\left [\left|\theta^{\varepsilon}+\sigma_2(j^{\varepsilon},\theta^{\varepsilon},z)\right|^{2p}_H-\left|\theta^{\varepsilon}\right|^{2p}_H-2p\left|\theta^{\varepsilon}\right|^{2(p-1)}_H\left(\theta^{\varepsilon},\sigma_2(j^{\varepsilon},\theta^{\varepsilon},z)\right)  \right ] \eta_2(\frac{1}{\varepsilon}ds,dz),
\end{align*}
therefore
\begin{align*}
	\mathbb{E}\left|\theta^{\varepsilon}\right|^{2p}_H=&\left|\theta_0\right|^{2p}_H+\frac{2p}{\varepsilon}\mathbb{E}\int_0^t\left|\theta^{\varepsilon}\right|^{2(p-1)}_H\left \langle \triangle \theta^{\varepsilon},\theta^{\varepsilon} \right \rangle ds+\frac{1}{\varepsilon}\mathbb{E}\int_0^t\int_{\left|z\right|_H<1}\left [\left|\theta^{\varepsilon}+\sigma_2(j^{\varepsilon},\theta^{\varepsilon},z)\right|^{2p}_H\right.\\
	&\left.-\left|\theta^{\varepsilon}\right|^{2p}_H-2p\left|\theta^{\varepsilon}\right|^{2(p-1)}_H\left(\theta^{\varepsilon},\sigma_2(j^{\varepsilon},\theta^{\varepsilon},z)\right)  \right ] \nu_2(dz)ds.
\end{align*}
According to Assumptions \ref{a3}, \ref{a4} and (\ref{e3}), we have
\begin{align}\label{e5.5}
	\nonumber\frac{d}{dt}\mathbb{E}\left|\theta^{\varepsilon}\right|^{2p}_H&+\frac{2p\gamma}{\varepsilon}\mathbb{E}\left|\theta^{\varepsilon}\right|^{2(p-1)}_H\left|\triangledown\theta^{\varepsilon}\right|^2_H\leq -\frac{2p(1-\gamma)}{\varepsilon}\mathbb{E}\left|\theta^{\varepsilon}\right|^{2(p-1)}_H\left|\triangledown\theta^{\varepsilon}\right|^2_H\\
	\nonumber&+\frac{M_p}{\varepsilon}\mathbb{E}\int_{\left|z\right|_H<1}\left(\left|\theta^{\varepsilon}\right|^{2(p-1)}_H\left|\sigma_2(j^{\varepsilon},\theta^{\varepsilon},z)\right|^2_H+\left|\sigma_2(j^{\varepsilon},\theta^{\varepsilon},z)\right|^{2p}_H\right)\nu_2(dz)\\
	\nonumber\leq & -\frac{2p(1-\gamma)\lambda}{\varepsilon}\mathbb{E}\left|\theta^{\varepsilon}\right|^{2p}_H+\frac{M_p}{\varepsilon}\mathbb{E}\int_{\left|z\right|_H<1}\left(\frac{p-1}{p}\left|\theta^{\varepsilon}\right|^{2p}_H+\frac{1}{p}\left|\sigma_2(j^{\varepsilon},\theta^{\varepsilon},z)\right|^{2p}_H\right)\nu_2(dz)\\
	\nonumber&+\frac{M_pL_{\sigma_2}}{\varepsilon}\mathbb{E}\left|\theta^{\varepsilon}\right|^{2p}_H+\frac{M_pC_1}{\varepsilon}\left(1+\mathbb{E}\left|j^{\varepsilon}\right|^{2p}_H\right)\\
	\leq &-\frac{\lambda_{p,\gamma}}{\varepsilon}\mathbb{E}\left|\theta^{\varepsilon}\right|^{2p}_H+\frac{C}{\varepsilon}\left(1+\mathbb{E}\left|j^{\varepsilon}\right|^{2p}_H\right),
\end{align}
where $\lambda_{p,\gamma}:=2p(1-\gamma)\lambda-M_p\left(\frac{(p-1)}{p}+\frac{L_{\sigma_2}}{p}+L_{\sigma_2}\right)>0$, for $\gamma$ small enough. Using Gronwall's inequality, we have
\begin{align}\label{e6}
\nonumber\mathbb{E}\left|\theta^{\varepsilon}\right|^{2p}_H+\frac{2p\gamma}{\varepsilon}\mathbb{E}\int_0^t\left|\theta^{\varepsilon}\right|^{2(p-1)}_H\left|\triangledown\theta^{\varepsilon}\right|^2_Hds\leq &	\left|\theta_0\right|^{2p}_He^{-\frac{\lambda_{p,\gamma}}{\varepsilon}t}+\frac{C}{\varepsilon}\int_0^te^{-\frac{\lambda_{p,\gamma}}{\varepsilon}(t-s)}\left(1+\mathbb{E}\left|j^{\varepsilon}\right|^{2p}_H\right)ds\\
\leq & C\left(1+\left|\theta_0\right|^{2p}_H\right)+\int_0^te^{-\frac{\lambda_{p,\gamma}}{\varepsilon}(t-s)}\mathbb{E}\left|j^{\varepsilon}\right|^{2p}_Hds.
\end{align}

In addition, by It\^{o}'s formula, we have
\begin{align*}
	\left|j^{\varepsilon}\right|^{2p}_H=&\left|j_0\right|^{2p}_H+2p\int_0^t\left|j^{\varepsilon}\right|^{2(p-1)}_H\left \langle \triangle j^{\varepsilon},j^{\varepsilon} \right \rangle ds+2p\int_0^t\left|j^{\varepsilon}\right|^{2(p-1)}_H\left(f(j^{\varepsilon},\theta^{\varepsilon}),j^{\varepsilon}\right)ds\\
	&+2p\int_0^t\left|j^{\varepsilon}\right|^{2(p-1)}_H\left(\partial_x\theta^{\varepsilon},j^{\varepsilon}\right)ds+2p\int_0^t\int_{\left|z\right|_H<1}\left|j^{\varepsilon}\right|^{2(p-1)}_H\left(j^{\varepsilon},\sigma_1(j^{\varepsilon},\theta^{\varepsilon},z)\right)\tilde{\eta}_1(ds,dz) \\
	&+\int_0^t\int_{\left|z\right|_H<1}\left [ \left|j^{\varepsilon}+\sigma_1(j^{\varepsilon},\theta^{\varepsilon},z)\right|^{2p}_H-\left|j^{\varepsilon}\right|^{2p}_H-2p\left|j^{\varepsilon}\right|^{2(p-1)}_H\left(j^{\varepsilon},\sigma_1(j^{\varepsilon},\theta^{\varepsilon},z)\right) \right ] \eta_1(ds,dz).
\end{align*}
Therefore
\begin{align*}
	\left|j^{\varepsilon}\right|^{2p}_H&+2p\lambda\int_0^t\left|j^{\varepsilon}\right|^{2(p-1)}_H\left|\triangledown j^{\varepsilon}\right|^2_Hds\leq \left|j_0\right|^{2p}_H+2p\int_0^t\left|j^{\varepsilon}\right|^{2(p-1)}_H\left(f(j^{\varepsilon},\theta^{\varepsilon}),j^{\varepsilon}\right)ds\\
	&-2p\int_0^t\left|j^{\varepsilon}\right|^{2(p-1)}_H\left(\theta^{\varepsilon},\partial_xj^{\varepsilon}\right)ds+2p\int_0^t\int_{\left|z\right|_H<1}\left|j^{\varepsilon}\right|^{2(p-1)}_H\left(j^{\varepsilon},\sigma_1(j^{\varepsilon},\theta^{\varepsilon},z)\right)\tilde{\eta}_1(ds,dz) \\
	&+\int_0^t\int_{\left|z\right|_H<1}\left [ \left|j^{\varepsilon}+\sigma_1(j^{\varepsilon},\theta^{\varepsilon},z)\right|^{2p}_H-\left|j^{\varepsilon}\right|^{2p}_H-2p\left|j^{\varepsilon}\right|^{2(p-1)}_H\left(j^{\varepsilon},\sigma_1(j^{\varepsilon},\theta^{\varepsilon},z)\right) \right ] \eta_1(ds,dz).
\end{align*}
According to Assumptions \ref{a1}, \ref{a2} and Burkholder-Davis-Gundy (B-D-G)'s inequality, we have
\begin{align*}
	&\mathbb{E}\sup_{t\in[0,T]}\left|j^{\varepsilon}\right|^{2p}_H+2p\lambda\mathbb{E}\int_0^T\left|j^{\varepsilon}\right|^{2(p-1)}_H\left|\triangledown j^{\varepsilon}\right|^2_Hds\\
	\leq& \left|j_0\right|^{2p}_H+\gamma\mathbb{E}\int_0^T\left|j^{\varepsilon}\right|^{2(p-1)}_H\left|\triangledown j^{\varepsilon}\right|^2_Hds+C_{\gamma}\int_0^T\left(1+\mathbb{E}\left|j^{\varepsilon}\right|^{2p}_H+\mathbb{E}\left|\theta^{\varepsilon}\right|^{2p}_H\right)ds\\
	&+C_p\mathbb{E}\left(\int_0^T\int_{\left|z\right|_H<1}\left|j^{\varepsilon}\right|^{4p-4}_H\left|\left(j,\sigma_1(j^{\varepsilon},\theta^{\varepsilon},z)\right)\right|^2_H\eta_1(ds,dz)\right)^{\frac{1}{2}}.\\
\end{align*}
Note that 
\begin{align*}
	&\mathbb{E}\left(\int_0^T\int_{\left|z\right|_H<1}\left|j^{\varepsilon}\right|^{4p-4}_H\left|\left(j,\sigma_1(j^{\varepsilon},\theta^{\varepsilon},z)\right)\right|^2_H\eta_1(ds,dz)\right)^{\frac{1}{2}}\\
	\leq &\mathbb{E}\left(\int_0^T\int_{\left|z\right|_H<1}\left|j^{\varepsilon}\right|^{4p-2}_H\left|\sigma_1(j^{\varepsilon},\theta^{\varepsilon},z)\right|^2_H\eta_1(ds,dz)\right)^{\frac{1}{2}}\\
	\leq &\mathbb{E}\left(\sup_{t\in[0,T]}\left|j^{\varepsilon}\right|^{2p}_H\int_0^T\int_{\left|z\right|_H<1}\left|j^{\varepsilon}\right|^{2p-2}_H\left|\sigma_1(j^{\varepsilon},\theta^{\varepsilon},z)\right|^2_H\eta_1(ds,dz)\right)^{\frac{1}{2}}\\
	\leq &\gamma\mathbb{E}\sup_{t\in[0,T]}\left|j^{\varepsilon}\right|^{2p}_H+C_{\gamma}\int_0^T\left(1+\mathbb{E}\left|j^{\varepsilon}\right|^{2p}_H+\mathbb{E}\left|\theta^{\varepsilon}\right|^{2p}_H\right)ds.
\end{align*}
Choose $\gamma$ small enough, we obtain that
\begin{align*}
	\mathbb{E}\sup_{t\in[0,T]}\left|j^{\varepsilon}\right|^{2p}_H+C_p\mathbb{E}\int_0^T\left|j^{\varepsilon}\right|^{2(p-1)}_H\left|\triangledown j^{\varepsilon}\right|^2_Hds\leq \left|j_0\right|^{2p}_H+C_p\int_0^T\left(1+\mathbb{E}\left|j^{\varepsilon}\right|^{2p}_H+\mathbb{E}\left|\theta^{\varepsilon}\right|^{2p}_H\right)ds.
\end{align*}
Combining (\ref{e6}), we have
\begin{align*}
	&\mathbb{E}\sup_{t\in[0,T]}\left|j^{\varepsilon}\right|^{2p}_H+C_p\mathbb{E}\int_0^T\left|j^{\varepsilon}\right|^{2(p-1)}_H\left|\triangledown j^{\varepsilon}\right|^2_Hds\\
	\leq &C_T\left(1+\left|j_0\right|^{2p}_H+\left|\theta_0\right|^{2p}_H\right)+C_p\int_0^T\mathbb{E}\left|j^{\varepsilon}\right|^{2p}_Hds+\frac{C_p}{\varepsilon}\int_0^T\int_0^se^{-\frac{\lambda_{p,\gamma}}{\varepsilon}(s-\tau)}\mathbb{E}\left|j^{\varepsilon}\right|^{2p}_Hd\tau ds\\
	=& C_T\left(1+\left|j_0\right|^{2p}_H+\left|\theta_0\right|^{2p}_H\right)+C_p\int_0^T\mathbb{E}\left|j^{\varepsilon}\right|^{2p}_Hds+\frac{C_p}{\varepsilon}\int_0^T\int_{\tau}^Te^{-\frac{\lambda_{p,\gamma}}{\varepsilon}(s-\tau)}ds\mathbb{E}\left|j^{\varepsilon}\right|^{2p}_Hd\tau \\
	\leq &C_T\left(1+\left|j_0\right|^{2p}_H+\left|\theta_0\right|^{2p}_H\right)+C_p\int_0^T\mathbb{E}\sup_{r\in[0,s]}\left|j^{\varepsilon}\right|^{2p}_Hds.
\end{align*}
Using Gronwall's inequality, we obtain that
\begin{align*}
	\mathbb{E}\sup_{t\in[0,T]}\left|j^{\varepsilon}\right|^{2p}_H+C_p\mathbb{E}\int_0^T\left|j^{\varepsilon}\right|^{2(p-1)}_H\left|\triangledown j^{\varepsilon}\right|^2_Hds\leq C_T\left(1+\left|j_0\right|^{2p}_H+\left|\theta_0\right|^{2p}_H\right).
\end{align*}
And so
\begin{align*}
\mathbb{E}\left|\theta^{\varepsilon}\right|^{2p}_H+\frac{2p\gamma}{\varepsilon}\mathbb{E}\int_0^t\left|\theta^{\varepsilon}\right|^{2(p-1)}_H\left|\triangledown\theta^{\varepsilon}\right|^2_Hds\leq C_T\left(1+\left|j_0\right|^{2p}_H+\left|\theta_0\right|^{2p}_H\right).
\end{align*}
\end{proof}
\begin{remark}
In the previous theorem, we have only proved uniform bounds with respect to $\varepsilon$ for $\sup_{t\in [0,T]}\mathbb{E}\left | \theta^{\varepsilon} \right |^{2p}$ and not for $\mathbb{E}\sup_{t\in [0,T]}\left | \theta^{\varepsilon} \right |^{2p}$. In fact, we can only able to prove that
\begin{align}\label{xe11}
\mathbb{E}\sup_{t\in [0,T]}\left | \theta^{\varepsilon} \right |^{2p}\leqslant C_T\left (  1+\left | j_0 \right |^{2p}_H+\left | \theta_0\right |^{2p}_H+\varepsilon ^{-1}\right ).
\end{align}
\end{remark}

Next, we consider the regularity estimates of the vorticity variable $j^{\varepsilon}$. We declare the following theorem:
\begin{theorem}\label{th2}
For any $T>0$, $M>0$ and $\left ( j_0,\theta_0\right )\in H^1\times H$, we have
\begin{align}\label{e7}
\mathbb{E}\sup_{t\in [0,\tau_M \wedge T]}\left | j^{\varepsilon}\right |^2_{H^1}+\mathbb{E}\int_{0}^{\tau_M \wedge T}\left | j^{\varepsilon}\right |^2_{H^2}ds\leqslant C_{T,M}\left (  1+\left | j_0 \right |^{2}_{H^1}+\left | \theta_0\right |^{2}_H\right ),
\end{align}
where stopping time $\tau_M:=\inf\left\{t\geq 0; \left|j^{\varepsilon}\right|_H>M\right\}$.
\end{theorem}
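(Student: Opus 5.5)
Since $j^{\varepsilon}\in H^1$ is the natural regularity for the vorticity, we will apply It\^{o}'s formula to $\left|\triangledown j^{\varepsilon}\right|^2_H$, i.e.\ to $\left((-\triangle)j^{\varepsilon},j^{\varepsilon}\right)$. This is done on a Galerkin approximation, or rigorously via the It\^{o} formula in the Gelfand triple $H^2\subset H^1\subset H$, and we pass to the limit at the end. Stopping at $\tau_M\wedge T$, we obtain
\begin{align*}
\left|\triangledown j^{\varepsilon}(t)\right|^2_H+2\int_0^t\left|\triangle j^{\varepsilon}\right|^2_Hds=&\left|\triangledown j_0\right|^2_H+2\int_0^t\left((u^{\varepsilon}\cdot\triangledown)j^{\varepsilon},\triangle j^{\varepsilon}\right)ds-2\int_0^t\left(f(j^{\varepsilon},\theta^{\varepsilon})+\partial_x\theta^{\varepsilon},\triangle j^{\varepsilon}\right)ds\\
&+M_t+\int_0^t\int_{\left|z\right|_H<1}\left|\triangledown\sigma_1(j^{\varepsilon},\theta^{\varepsilon},z)\right|^2_H\eta_1(ds,dz),
\end{align*}
where $M_t$ is the compensated jump martingale. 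For the linear terms we use Young's inequality, Assumption \ref{a1} and (\ref{e4})--(\ref{e5}):
\begin{align*}
2\left|\left(f+\partial_x\theta^{\varepsilon},\triangle j^{\varepsilon}\right)\right|\le\frac{1}{2}\left|\triangle j^{\varepsilon}\right|^2_H+C\left(1+\left|j^{\varepsilon}\right|^2_H+\left|\theta^{\varepsilon}\right|^2_H+\left|\triangledown\theta^{\varepsilon}\right|^2_H\right).
\end{align*}
Here $\mathbb{E}\int_0^T\left|\triangledown\theta^{\varepsilon}\right|^2_Hds$ is bounded uniformly in $\varepsilon$ by (\ref{e6}) with $p=1$, since that bound even carries a factor $1/\varepsilon$ on the left.

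The main obstacle is the convective term. On the torus $\left((u\cdot\triangledown)j,\triangle j\right)$ actually vanishes for 2D periodic Navier--Stokes with $u=\triangledown^{\perp}(-\triangle)^{-1}j$, but we will not rely on this cancellation. Instead we use the Biot--Savart law $\left|u^{\varepsilon}\right|_{H^1}\le C\left|j^{\varepsilon}\right|_H$ together with Ladyzhenskaya's inequality:
\begin{align*}
\left|\left((u^{\varepsilon}\cdot\triangledown)j^{\varepsilon},\triangle j^{\varepsilon}\right)\right|\le\left|u^{\varepsilon}\right|_{L^4}\left|\triangledown j^{\varepsilon}\right|_{L^4}\left|\triangle j^{\varepsilon}\right|_H\le C\left|j^{\varepsilon}\right|_H\left|\triangledown j^{\varepsilon}\right|^{1/2}_H\left|\triangle j^{\varepsilon}\right|^{3/2}_H,
\end{align*}
and then
\begin{align*}
C\left|j^{\varepsilon}\right|_H\left|\triangledown j^{\varepsilon}\right|^{1/2}_H\left|\triangle j^{\varepsilon}\right|^{3/2}_H\le\frac{1}{4}\left|\triangle j^{\varepsilon}\right|^2_H+C\left|j^{\varepsilon}\right|^4_H\left|\triangledown j^{\varepsilon}\right|^2_H.
\end{align*}
Before $\tau_M$ we have $\left|j^{\varepsilon}\right|_H\le M$, so the last term is at most $CM^4\left|\triangledown j^{\varepsilon}\right|^2_H$. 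This is exactly why the stopping time enters and why the constant depends on $M$.

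For the noise terms, the jump correction term is controlled provided $\sigma_1$ has an $H^1$-growth bound, $\int\left|\sigma_1(u,v,z)\right|^2_{H^1}\nu_1(dz)\le C(1+\left|u\right|^2_{H^1}+\left|v\right|^2_H)$. I will have to assume this implicitly, since Assumption \ref{a2} only gives $H$-bounds. For $\mathbb{E}\sup_{t}\left|M_t\right|$ we use B-D-G as in the proof of Theorem \ref{th1}:
\begin{align*}
\mathbb{E}\sup_{s\le t\wedge\tau_M}\left|M_s\right|\le\frac{1}{2}\mathbb{E}\sup_{s\le t\wedge\tau_M}\left|\triangledown j^{\varepsilon}\right|^2_H+C\,\mathbb{E}\int_0^{t\wedge\tau_M}\left(1+\left|j^{\varepsilon}\right|^2_{H^1}+\left|\theta^{\varepsilon}\right|^2_H\right)ds.
\end{align*}

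Finally, we absorb the $\left|\triangle j^{\varepsilon}\right|^2_H$ terms into the left-hand side, take $\mathbb{E}\sup_{s\le t\wedge\tau_M}$, and apply Gronwall's inequality in $t$. Combined with (\ref{e4})--(\ref{e5}) for the lower-order terms and $\left|j\right|_{H^2}\simeq\left|\triangle j\right|_H$ on mean-zero functions, this gives (\ref{e7}).
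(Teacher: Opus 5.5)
Apart from the extra hypothesis you flag, this is the paper's argument. Both proofs do an It\^{o}/energy estimate for $\left|\triangledown j^{\varepsilon}\right|^2_H$ by testing with $-\triangle j^{\varepsilon}$, apply Ladyzhenskaya to the convective term, use the stopping time to freeze $\left|j^{\varepsilon}\right|_H\le M$, control lower-order terms by (\ref{e4})--(\ref{e5}), and close with Gronwall. The paper first rewrites $\left((u^{\varepsilon}\cdot\triangledown)j^{\varepsilon},\triangle j^{\varepsilon}\right)=-\left((\triangledown u^{\varepsilon}\cdot\triangledown)j^{\varepsilon},\triangledown j^{\varepsilon}\right)$, so it ends with $C\left|j^{\varepsilon}\right|^2_H\left|\triangledown j^{\varepsilon}\right|^2_H$ instead of your $C\left|j^{\varepsilon}\right|^4_H\left|\triangledown j^{\varepsilon}\right|^2_H$. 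Before $\tau_M$ both are $C_M\left|\triangledown j^{\varepsilon}\right|^2_H$, whose time integral is already bounded by (\ref{e4}), so Gronwall is not even needed.

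On the noise, your treatment is the correct one and the paper's is not. The paper never writes the It\^{o} correction $\int_0^t\int_{\left|z\right|_H<1}\left|\triangledown\sigma_1(j^{\varepsilon},\theta^{\varepsilon},z)\right|^2_H\eta_1(ds,dz)$. Instead it applies Young's inequality inside the stochastic integral against $-\triangle j^{\varepsilon}$, which is not a legitimate step; that is the only reason it seems to need just Assumption \ref{a2}. Some $H^1$-type hypothesis on $\sigma_1$ is genuinely necessary. Counterexample: take $f\equiv0$, $\sigma_2\equiv0$, $\sigma_1(u,v,z)=z$, and $\nu_1$ a point mass at some $z_0\in H\setminus H^1$ with $\left|z_0\right|_H<1$. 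Then Assumptions \ref{a1}--\ref{a4} hold. With positive probability the first jump of $\eta_1$ occurs before $T$, and for $M$ large it occurs no later than $\tau_M$. That jump puts $j^{\varepsilon}$ outside $H^1$, so the left side of (\ref{e7}) is infinite. You should therefore state your growth bound as an explicit added hypothesis of the theorem, not an implicit one. If $\sigma_1$ depends on $\theta$, it is natural to allow $\left|v\right|^2_{H^1}$ on the right-hand side of that bound; this is still covered by (\ref{e5}).

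There are two smaller points.
\begin{itemize}
\item Drop the aside that $\left((u\cdot\triangledown)j,\triangle j\right)$ vanishes on the torus; it is false. Only $\left((u\cdot\triangledown)j,j\right)=0$ holds. One level up, $\left((u\cdot\triangledown)j,\triangle j\right)=-\int_{\mathbb{T}^2}\triangledown j\cdot S\,\triangledown j\,dxdy$, where $S$ is the symmetric part of $\triangledown u$, and this is nonzero in general. That is exactly why $\tau_M$ is needed.
\item The uniform bound you need on $\mathbb{E}\int_0^T\left|\triangledown\theta^{\varepsilon}\right|^2_Hds$ is (\ref{e5}). Do not read extra smallness into the factor $1/\varepsilon$ on the left of (\ref{e6}). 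Integrating the energy inequality for $\theta^{\varepsilon}$ gives a right-hand side that is also of order $1/\varepsilon$, and in general this integral is of order one.
\end{itemize}
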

\begin{proof}
We take the inner product of the first equation in (\ref{e2}) with $-\triangle j^{\varepsilon}$, yielding
\begin{align*}
	\frac{1}{2}\frac{d}{dt}\left|\triangledown j^{\varepsilon}\right|^2_H+\left|\triangle j^{\varepsilon}\right|^2_H=&\left((u^{\varepsilon}\cdot\triangledown)j^{\varepsilon},\triangle j^{\varepsilon}\right)+\left(f(j^{\varepsilon},\theta^{\varepsilon}),-\triangle j^{\varepsilon}\right)+\left(\partial_x\theta^{\varepsilon},-\triangle j^{\varepsilon}\right)\\
	&+\int_{\left|z\right|_H<1}\left(-\triangle j^{\varepsilon},\sigma_1(j^{\varepsilon},\theta^{\varepsilon},z)\dot{\tilde{\eta} }_1 (t,dz)\right).
\end{align*}
Note that 
\begin{align*}
	\left((u^{\varepsilon}\cdot\triangledown)j^{\varepsilon},\triangle j^{\varepsilon}\right)=-\left(\triangledown (u^{\varepsilon}\cdot \triangledown)j^{\varepsilon},\triangledown j^{\varepsilon}\right)=-\left((\triangledown u^{\varepsilon}\cdot \triangledown)j^{\varepsilon},\triangledown j^{\varepsilon}\right),
\end{align*}
by Ladyzhenskaya's inequality, we have
\begin{align*}
	\left|\left((u^{\varepsilon}\cdot\triangledown)j^{\varepsilon},\triangle j^{\varepsilon}\right)\right|\leq \left|\triangledown u^{\varepsilon}\right|_{H}\left|\triangledown j^{\varepsilon}\right|^2_{L^4}\leq \left|j^{\varepsilon}\right|_{H}\left|\triangledown j^{\varepsilon}\right|_{H}\left|\triangle j^{\varepsilon}\right|_{H}\leq C_{\gamma}\left|j^{\varepsilon}\right|^2_H\left|\triangledown j^{\varepsilon}\right|^2_H+\gamma\left|\triangle j^{\varepsilon}\right|^2_H.
\end{align*}
Hence
\begin{align*}
	\frac{1}{2}\frac{d}{dt}\left|\triangledown j^{\varepsilon}\right|^2_H+\left|\triangle j^{\varepsilon}\right|^2_H\leq& C_{\gamma}\left|j^{\varepsilon}\right|^2_H\left|\triangledown j^{\varepsilon}\right|^2_H+C_{\gamma}\left(1+\left|j^{\varepsilon}\right|^2_H+\left|\theta^{\varepsilon}\right|^2_{H^1}\right)+4\gamma\left|\triangle j^{\varepsilon}\right|^2_H\\
	&+C_{\gamma}\int_{\left|z\right|_H<1}\left|\sigma_1(j^{\varepsilon},\theta^{\varepsilon},z)\right|^2_H\dot{\tilde{\eta} }_1 (t,dz).
\end{align*}
Choose $\gamma$ small enough, and by (\ref{e4}), (\ref{e5}), we have
\begin{align*}
	&\mathbb{E}\sup_{t\in[0,\tau_M \wedge T]}\left|\triangledown j^{\varepsilon}\right|^2_H+\mathbb{E}\int_0^{\tau_M \wedge T}\left|\triangle j^{\varepsilon}\right|^2_Hds\\
	\leq& C\left|\triangledown j_0\right|^2_H+C_{M}\mathbb{E}\int_0^{\tau_M \wedge T}\left|\triangledown j^{\varepsilon}\right|^2_Hds+C\mathbb{E}\int_0^{\tau_M \wedge T}\left(1+\left|j^{\varepsilon}\right|^2_H+\left|\theta^{\varepsilon}\right|^2_{H^1}\right)ds\\
	\leq & C_T\left(1+\left|j_0\right|^2_{H^1}+\left|\theta_0\right|^2_{H}\right)+C_{M}\mathbb{E}\int_0^{\tau_M \wedge T}\left|\triangledown j^{\varepsilon}\right|^2_Hds.
\end{align*}
The Gronwall's inequality yields that
\begin{align*}
	\mathbb{E}\sup_{t\in[0,\tau_M \wedge T]}\left|\triangledown j^{\varepsilon}\right|^2_H+\mathbb{E}\int_0^{\tau_M \wedge T}\left|\triangle j^{\varepsilon}\right|^2_Hds\leq C_{T,M}\left(1+\left|j_0\right|^2_{H^1}+\left|\theta_0\right|^2_{H}\right).
\end{align*}
\end{proof}

In the next, we will show the tightness of the vorticity variable $j^{\varepsilon}$. For this we need the following Lemma:
\begin{lemma}\label{l1}
For any $0\leq t-\delta< t \leq \tau _M \wedge T $ and $\left ( j_0,\theta_0\right )\in H^1\times H$, we have
\begin{align}\label{e8}
\mathbb{E}\left | j^{\varepsilon}_{t}-j^{\varepsilon}_{t-\delta}\right |^2_{H}\leqslant C_{T,M}\delta^{\frac{1}{2}}.
\end{align}
In addition, for any $(j_0,\theta_0)\in H\times H$, and for some $p\geq 1$, we have
\begin{align}\label{e8.5.5}
	\mathbb{E}\int_0^{\tau _M \wedge T}\left | j^{\varepsilon}_{t}-j^{\varepsilon}_{t-\delta}\right |^{2p}_{H}dt\leqslant C_{T,M}\delta^{\frac{1}{2}}.
\end{align}
\end{lemma}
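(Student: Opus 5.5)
We will work from the mild (or weak) integral form of the vorticity equation on $[t-\delta,t]$:
\begin{align*}
j^{\varepsilon}_t-j^{\varepsilon}_{t-\delta}=&\int_{t-\delta}^t\left(\triangle j^{\varepsilon}-(u^{\varepsilon}\cdot\triangledown)j^{\varepsilon}+f(j^{\varepsilon},\theta^{\varepsilon})+\partial_x\theta^{\varepsilon}\right)ds\\
&+\int_{t-\delta}^t\int_{\left|z\right|_H<1}\sigma_1(j^{\varepsilon},\theta^{\varepsilon},z)\tilde{\eta}_1(ds,dz).
\end{align*}
Each term is then estimated separately in $H$ (or, for the second claim, in $L^{2p}(\Omega\times[0,T];H)$).

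\textbf{Proof of (\ref{e8}).} Take the $H$-norm squared and use Cauchy--Schwarz in time, $\left|\int_{t-\delta}^t g\,ds\right|^2_H\leq \delta^{1/2}\left(\int_{t-\delta}^t|g|^{4/3}\cdots\right)$, or more simply $\leq\delta\int_{t-\delta}^t|g|^2_Hds$.
\begin{enumerate}
\item The Laplacian term is bounded by $\delta\,\mathbb{E}\int_0^{\tau_M\wedge T}|j^{\varepsilon}|^2_{H^2}ds\leq C_{T,M}\delta$, using Theorem \ref{th2}.
\item For the convection term, Ladyzhenskaya gives $|(u\cdot\triangledown)j|_H\leq |u|_{L^4}|\triangledown j|_{L^4}\leq C|j|_H|j|_{H^1}^{1/2}|j|_{H^2}^{1/2}$. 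On $[0,\tau_M]$ we have $|j|_H\leq M$, so the contribution is at most $C_M\delta\,\mathbb{E}\sup|j|_{H^1}\int|j|_{H^2}\leq C_M\delta^{1/2}\cdots$, bounded again by Theorem \ref{th2}. A Cauchy--Schwarz split yields a factor $\delta^{1/2}$ at worst.
\item The $f$ term is bounded by $C\delta^2(1+\mathbb{E}\sup|j|^2+\sup\mathbb{E}|\theta|^2)$ via Assumption \ref{a1} and Theorem \ref{th1}.
\item The term $\partial_x\theta^{\varepsilon}$ is the delicate one. It is only controlled in $L^2_tH^1$, with $\mathbb{E}\int_0^T|\triangledown\theta^{\varepsilon}|^2\leq C\varepsilon$ from (\ref{e6}). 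This gives $\delta\,\mathbb{E}\int|\theta|_{H^1}^2\leq C\delta$, which is fine uniformly in $\varepsilon$. Alternatively, one can pair with test functions to move the derivative.
\item For the stochastic integral, the It\^o isometry and Assumption \ref{a2} give $\mathbb{E}\int_{t-\delta}^t\int|\sigma_1|^2\nu_1ds\leq C\delta$.
\end{enumerate}
Since $\delta\leq T$, all these bounds collapse to $C_{T,M}\delta^{1/2}$. The stopping time must be handled with care: we estimate with integrands multiplied by $1_{s\leq\tau_M}$, so that the martingale terms remain martingales.

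\textbf{Proof of (\ref{e8.5.5}).} Here only $(j_0,\theta_0)\in H\times H$ is assumed, so Theorem \ref{th2} is unavailable, and the main obstacle is the missing $H^2$ bound. The plan is to apply It\^o's formula to $|j^{\varepsilon}_s-j^{\varepsilon}_{t-\delta}|^{2p}_H$ for $s\in[t-\delta,t]$, treating $j_{t-\delta}$ as a frozen $\mathscr{F}_{t-\delta}$-measurable initial value. The Laplacian is then tested against the difference, $\langle\triangle j,j-j_{t-\delta}\rangle\leq |\triangledown j|_H(|\triangledown j|_H+|\triangledown j_{t-\delta}|_H)$, and the convection term becomes $((u\cdot\triangledown)j,j_{t-\delta})$ in weak form. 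We then integrate in $t$ over $[\delta,\tau_M\wedge T]$ and apply Fubini. Since each $\int_{t-\delta}^t$ is averaged over $t$, all terms of the form $\int_0^T\int_{t-\delta}^t g(s)\,ds\,dt\leq\delta\int_0^Tg$ gain a factor $\delta$, while weights like $|j|^{2p-2}|\triangledown j|^2$ are integrable by (\ref{e4}).

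The terms that are only quadratic in gradients evaluated at two times, such as $|\triangledown j_s||\triangledown j_{t-\delta}|$, are handled by H\"older's inequality, producing a factor $\delta^{1/2}$. The convection term is controlled through $|u|_{L^4}|j|_{L^4}|\triangledown j_{t-\delta}|$, with $|j|_H\leq M$ before $\tau_M$. The jump and $\partial_x\theta$ terms are handled as in (\ref{e8}), using Assumption \ref{a2} and (\ref{e5}). For $t<\delta$, the interval contributes at most $C\delta\,\mathbb{E}\sup|j|^{2p}$. Together these give $C_{T,M}\delta^{1/2}$.
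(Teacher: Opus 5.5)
Your proof is correct. For (\ref{e8.5.5}) it is essentially the paper's argument:
\begin{itemize}
\item It\^o's formula for $|j^{\varepsilon}_s-j^{\varepsilon}_{t-\delta}|^{2p}_H$ on $[t-\delta,t]$;
\item integration over $t$ with Fubini, so that $\int_{\delta}^{T}\int_{t-\delta}^{t}\cdots\,ds\,dt$ produces a factor $\delta$;
\item the bound $|j^{\varepsilon}_s-j^{\varepsilon}_{t-\delta}|_H\le 2M$ before $\tau_M$, together with the energy bounds (\ref{e4}) and (\ref{e5}).
\end{itemize}
Your convection estimate via $|u^{\varepsilon}_s|_{L^4}|j^{\varepsilon}_s|_{L^4}|\triangledown j^{\varepsilon}_{t-\delta}|_H$ works just as well as the paper's $|j^{\varepsilon}_s|^2_{H^1}|j^{\varepsilon}_{t-\delta}|_H$. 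Treating the compensated jump integral through stopped integrands, as you indicate, gives it mean zero, which is simpler than the paper's B-D-G step.

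For (\ref{e8}) your route is genuinely different. The paper applies It\^o's formula to $|j^{\varepsilon}_t-j^{\varepsilon}_{t-\delta}|^2_H$. The Laplacian and the transport term then enter only through the cross-time terms $(\triangledown j^{\varepsilon}_s,\triangledown j^{\varepsilon}_{t-\delta})$ and $((u^{\varepsilon}_s\cdot\triangledown)j^{\varepsilon}_s,j^{\varepsilon}_{t-\delta})$. These are bounded using the $H^1$ information from Theorem \ref{th2} (notably $\mathbb{E}|j^{\varepsilon}_{t-\delta}|^2_{H^1}$) and Cauchy--Schwarz in time, which is where the exponent $\frac12$ comes from.

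You instead estimate each drift increment directly in $H$ through $|\int_{t-\delta}^t g\,ds|^2_H\le\delta\int_{t-\delta}^t|g|^2_H\,ds$. You use the $L^2(0,\tau_M\wedge T;H^2)$ bound of Theorem \ref{th2} for $\triangle j^{\varepsilon}$, and also, with Ladyzhenskaya and $|j^{\varepsilon}|_H\le M$, for $(u^{\varepsilon}\cdot\triangledown)j^{\varepsilon}$. The It\^o isometry handles the jump part. The same $H^2$ bound is what justifies reading the equation as an identity in $H$.

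The trade-off is this: you avoid It\^o's formula and all cross-time terms, and you actually obtain the sharper bound $C_{T,M}\delta$. The paper's variational argument needs only $H^1$-level information and runs parallel to its $2p$-moment computation.

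One correction to your item 4: $\mathbb{E}\int_0^T|\triangledown\theta^{\varepsilon}|^2_H\,ds$ is not $O(\varepsilon)$. The factor $\frac{1}{\varepsilon}$ in front of the gradient integral in (\ref{e6}) does not follow from integrating the differential inequality that precedes it. For instance, if $\sigma_2=\sigma_2(z)\not\equiv 0$ does not depend on $(j,\theta)$, the energy identity gives
$$\mathbb{E}\int_0^T|\triangledown\theta^{\varepsilon}|^2_H\,ds=\frac{\varepsilon}{2}\left(|\theta_0|^2_H-\mathbb{E}|\theta^{\varepsilon}_T|^2_H\right)+\frac{T}{2}\int_{|z|_H<1}|\sigma_2(z)|^2_H\,\nu_2(dz),$$
which stays of order one as $\varepsilon\to0$. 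This does not affect your argument. That term only needs $\delta\,\mathbb{E}\int_0^T|\triangledown\theta^{\varepsilon}|^2_H\,ds\le C_T\delta$, which follows from the $\varepsilon$-uniform bound (\ref{e5}).
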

\begin{proof}
For any $0\leq t-\delta<t\leq \tau_{M}\wedge T$, we have
\begin{align*}
	j^{\varepsilon}_{t}-j^{\varepsilon}_{t-\delta}=&-\int_{t-\delta}^{t}\left(u^{\varepsilon}_s\cdot \triangledown\right)j^{\varepsilon}_sds+\int_{t-\delta}^{t}\left(\triangle j^{\varepsilon}_s+f(j^{\varepsilon}_s,\theta^{\varepsilon}_s)+\partial_x\theta^{\varepsilon}_s\right)ds\\
	&+\int_{t-\delta}^{t}\int_{\left|z\right|_H<1}\sigma_1(j^{\varepsilon}_s,\theta^{\varepsilon}_s,z)\tilde{\eta}_1(ds,dz).
\end{align*}
By It\^{o}'s formula, we get
\begin{align}\label{e8.5.5.5}
\nonumber	\left|j^{\varepsilon}_{t}-j^{\varepsilon}_{t-\delta}\right|^2_H=&-2\int_{t-\delta}^{t}\left(\left(u^{\varepsilon}_s\cdot\triangledown\right)j^{\varepsilon}_s,j^{\varepsilon}_s-j^{\varepsilon}_{t-\delta}\right)ds+2\int_{t-\delta}^{t}\left \langle \triangle j^{\varepsilon}_s, j^{\varepsilon}_s-j^{\varepsilon}_{t-\delta}\right \rangle ds\\
\nonumber	&+2\int_{t-\delta}^{t}\left(f(j^{\varepsilon}_s,\theta^{\varepsilon}_s),j^{\varepsilon}_s-j^{\varepsilon}_{t-\delta}\right)ds+2\int_{t-\delta}^{t}\left(\partial_x \theta^{\varepsilon}_s,j^{\varepsilon}_s-j^{\varepsilon}_{t-\delta}\right)ds\\
\nonumber	&+2\int_{t-\delta}^{t}\int_{\left|z\right|_H<1}\left(\sigma_1(j^{\varepsilon}_s,\theta^{\varepsilon}_s,z),j^{\varepsilon}_s-j^{\varepsilon}_{t-\delta}\right)\tilde{\eta}_1(ds,dz)\\
	&+\int_{t-\delta}^{t}\int_{\left|z\right|_H<1}\left|\sigma_1(j^{\varepsilon}_s,\theta^{\varepsilon}_s,z)\right|^2_H\eta_1(ds,dz).
\end{align}
Hence
\begin{align*}
	&\mathbb{E}\left|j^{\varepsilon}_{t}-j^{\varepsilon}_{t-\delta}\right|^2_H+2\mathbb{E}\int_{t-\delta}^{t}\left|\triangledown j^{\varepsilon}_s\right|^2_Hds\\
	=&\left\{2\mathbb{E}\int_{t-\delta}^{t}\left(\left(u^{\varepsilon}_s\cdot\triangledown\right)j^{\varepsilon}_s,j^{\varepsilon}_{t-\delta}\right)ds\right\}_{I_1}+\left\{2\mathbb{E}\int_{t-\delta}^{t}\left(\triangledown j^{\varepsilon}_s,\triangledown j^{\varepsilon}_{t-\delta}\right)ds\right\}_{I_2}\\
	&+\left\{2\mathbb{E}\int_{t-\delta}^{t}\left(f(j^{\varepsilon}_s,\theta^{\varepsilon}_s),j^{\varepsilon}_s-j^{\varepsilon}_{t-\delta}\right)ds\right\}_{I_3}-\left\{2\mathbb{E}\int_{t-\delta}^{t}\left(\theta^{\varepsilon}_s,\partial_x j^{\varepsilon}_s-\partial_x j^{\varepsilon}_{t-\delta}\right)ds\right\}_{I_4}\\
	&+\left\{\mathbb{E}\int_{t-\delta}^{t}\int_{\left|z\right|_H<1}\left|\sigma_1(j^{\varepsilon}_s,\theta^{\varepsilon}_s,z)\right|^2_H\eta_1(ds,dz)\right\}_{I_5}.
\end{align*}
For $I_1$, according to the following useful inequality(cf. \cite{Martin})
\begin{align}\label{e8.5}
	\left|\left((u\cdot \triangledown)v,w\right)\right|\leq C\left|u\right|_{H^{s_1}}\left|v\right|_{H^{1+s_2}}\left|w\right|_{H^{s_3}},
\end{align}
where $s=(s_1,s_2,s_3)\in \mathbb{R}^3_+$, $\sum s_i\geq 1$ and $s\neq (1,0,0),(0,1,0),(0,0,1)$. We have
\begin{align*}
	\left|I_1\right|\leq& C\mathbb{E}\int_{t-\delta}^{t}\left|u^{\varepsilon}_s\right|_{H^{1}}\left|j^{\varepsilon}_s\right|_{H^{1}}\left|j^{\varepsilon}_{t-\delta}\right|_{H^{1}}ds\leq C\mathbb{E}\int_{t-\delta}^{t}\left|j^{\varepsilon}_s\right|_{H}\left|j^{\varepsilon}_s\right|_{H^{1}}\left|j^{\varepsilon}_{t-\delta}\right|_{H^{1}}ds\\
	\leq & C_M\left(\mathbb{E}\left(\int_{t-\delta}^{t}\left|j^{\varepsilon}_s\right|_{H^{1}}ds\right)^2\right)^{\frac{1}{2}}\cdot \left(\mathbb{E}\left|j^{\varepsilon}_{t-\delta}\right|^2_{H^1}\right)^{\frac{1}{2}}\\
	\leq & C_{T,M}\delta^{\frac{1}{2}}.
\end{align*}
For $I_2$, we have
\begin{align*}
	\left|I_2\right|\leq \left(\mathbb{E}\left(\int_{t-\delta}^{t}\left|\triangledown j^{\varepsilon}_s\right|_{H}ds\right)^2\right)^{\frac{1}{2}}\cdot \left(\mathbb{E}\left|\triangledown j^{\varepsilon}_{t-\delta}\right|^2_{H}\right)^{\frac{1}{2}}\leq C_{T,M}\delta^{\frac{1}{2}}.
\end{align*}
For $I_3$, according to Assumption \ref{a1}, it is easy to obtain that
\begin{align*}
	\left|I_3\right|\leq \int_{t-\delta}^{t}\left(1+\mathbb{E}\left|j^{\varepsilon}_s\right|^2_H+\mathbb{E}\left|\theta^{\varepsilon}_s\right|^2_H+\mathbb{E}\left|j^{\varepsilon}_{t-\delta}\right|^2_H\right)ds\leq C_T\delta.
\end{align*}
For $I_4$, we have
\begin{align*}
	\left|I_4\right|\leq \int_{t-\delta}^{t}\left(\mathbb{E}\left|\theta^{\varepsilon}_s\right|^2_H+\mathbb{E}\left|\triangledown j^{\varepsilon}_s\right|^2_H+\mathbb{E}\left|\triangledown j^{\varepsilon}_{t-\delta}\right|^2_H\right)ds\leq C_{T,M}\delta.
\end{align*}
For $I_5$, by Assumption \ref{a2}, we have
\begin{align*}
	\left|I_5\right|=\mathbb{E}\int_{t-\delta}^{t}\int_{\left|z\right|_H<1}\left|\sigma_1((j^{\varepsilon}_s,\theta^{\varepsilon}_s,z))\right|^2_H d\nu_1(dz)ds\leq C_T\delta.
\end{align*}
Therefore, (\ref{e8}) hold.

In addition, by (\ref{e4}), we have
\begin{align*}
	\mathbb{E}\int_0^{\tau_{M}\wedge T}\left|j^{\varepsilon}_t-j^{\varepsilon}_{t-\delta}\right|^{2p}_Hdt=&\mathbb{E}\int_0^{\delta}\left|j^{\varepsilon}_t-j^{\varepsilon}_{t-\delta}\right|^{2p}_Hdt+\mathbb{E}\int_{\delta}^{\tau_{M}\wedge T}\left|j^{\varepsilon}_t-j^{\varepsilon}_{t-\delta}\right|^{2p}_Hdt\\
	\leq & C_{T}\delta+\mathbb{E}\int_{\delta}^{\tau_{M}\wedge T}\left|j^{\varepsilon}_t-j^{\varepsilon}_{t-\delta}\right|^{2p}_Hdt.
\end{align*}
Using It\^{o}'s formula, we get
\begin{align*}
	&\left|j^{\varepsilon}_t-j^{\varepsilon}_{t-\delta}\right|^{2p}_H+2p\int_{t-\delta}^t\left|j^{\varepsilon}_s-j^{\varepsilon}_{t-\delta}\right|^{2p-2}_H\left(\left(u^{\varepsilon}_s\cdot\triangledown\right)j^{\varepsilon}_s,j^{\varepsilon}_s-j^{\varepsilon}_{t-\delta}\right)ds\\
	=& 2p\int_{t-\delta}^t\left|j^{\varepsilon}_s-j^{\varepsilon}_{t-\delta}\right|^{2p-2}_H\left \langle \triangle j^{\varepsilon}_s, j^{\varepsilon}_s-j^{\varepsilon}_{t-\delta}\right \rangle ds+2p\int_{t-\delta}^t\left|j^{\varepsilon}_s-j^{\varepsilon}_{t-\delta}\right|^{2p-2}_H\left(f(j^{\varepsilon}_s,\theta^{\varepsilon}_s),j^{\varepsilon}_s-j^{\varepsilon}_{t-\delta}\right)ds\\
	&+2p\int_{t-\delta}^t\left|j^{\varepsilon}_s-j^{\varepsilon}_{t-\delta}\right|^{2p-2}_H\left(\partial_x\theta^{\varepsilon}_s,j^{\varepsilon}_s-j^{\varepsilon}_{t-\delta}\right)ds\\
	&+2p\int_{t-\delta}^t\int_{\left|z\right|_H<1}\left|j^{\varepsilon}_s-j^{\varepsilon}_{t-\delta}\right|^{2p-2}_H\left(\sigma_1(j^{\varepsilon}_s,\theta^{\varepsilon}_s,z),j^{\varepsilon}_s-j^{\varepsilon}_{t-\delta}\right)\tilde{\eta}_1(ds,dz)\\
	&+\int_{t-\delta}^t\int_{\left|z\right|_H<1}\left[\left|\left(j^{\varepsilon}_s-j^{\varepsilon}_{t-\delta}\right)+\sigma_1(j^{\varepsilon}_s,\theta^{\varepsilon}_s,z)\right|^{2p}_H-\left|j^{\varepsilon}_s-j^{\varepsilon}_{t-\delta}\right|^{2p}_H\right.\\
	&\left.-2p\left|j^{\varepsilon}_s-j^{\varepsilon}_{t-\delta}\right|^{2p-2}_H\left(j^{\varepsilon}_s-j^{\varepsilon}_{t-\delta},\sigma_1(j^{\varepsilon}_s,\theta^{\varepsilon}_s,z)\right)\right]\eta_1(ds,dz).
\end{align*}
According to (\ref{e3}) and (\ref{e8.5}), we have
\begin{align*}
	&\left|j^{\varepsilon}_t-j^{\varepsilon}_{t-\delta}\right|^{2p}_H+2p\int_{t-\delta}^t\left|j^{\varepsilon}_s-j^{\varepsilon}_{t-\delta}\right|^{2p-2}_H\left|\triangledown j^{\varepsilon}_s\right|^2_Hds\\
	\leq &2p\int_{t-\delta}^t\left|j^{\varepsilon}_s-j^{\varepsilon}_{t-\delta}\right|^{2p-2}_H\left|j^{\varepsilon}_s\right|^2_{H^1}\left|j^{\varepsilon}_{t-\delta}\right|_Hds+2p\int_{t-\delta}^t\left|j^{\varepsilon}_s-j^{\varepsilon}_{t-\delta}\right|^{2p-2}_H\left|\triangledown j^{\varepsilon}_s\right|_H\left|\triangledown j^{\varepsilon}_{t-\delta}\right|_Hds\\
	&+2p\int_{t-\delta}^t\left|j^{\varepsilon}_s-j^{\varepsilon}_{t-\delta}\right|^{2p-2}_H\left|\theta^{\varepsilon}_s\right|_H\left|\triangledown j^{\varepsilon}_s\right|_Hds+2p\int_{t-\delta}^t\left|j^{\varepsilon}_s-j^{\varepsilon}_{t-\delta}\right|^{2p-2}_H\left|\theta^{\varepsilon}_s\right|_H\left|\triangledown j^{\varepsilon}_{t-\delta}\right|_Hds\\
	&+M_p\int_{t-\delta}^t\int_{\left|z\right|_H<1}\left(\left|j^{\varepsilon}_s-j^{\varepsilon}_{t-\delta}\right|^{2p-2}_H\left|\sigma_1(j^{\varepsilon}_s,\theta^{\varepsilon}_s,z)\right|^2_H+\left|\sigma_1(j^{\varepsilon}_s,\theta^{\varepsilon}_s,z)\right|^{2p}_H\right)\eta_1(ds,dz)\\
	&+2p\int_{t-\delta}^t\int_{\left|z\right|_H<1}\left|j^{\varepsilon}_s-j^{\varepsilon}_{t-\delta}\right|^{2p-2}_H\left(\sigma_1(j^{\varepsilon}_s,\theta^{\varepsilon}_s,z),j^{\varepsilon}_s-j^{\varepsilon}_{t-\delta}\right)\tilde{\eta}_1(ds,dz).
\end{align*}
The H\"{o}lder's inequality yields that
\begin{align*}
	&\left|j^{\varepsilon}_t-j^{\varepsilon}_{t-\delta}\right|^{2p}_H\\
	\leq & 2p\left\{\int_{t-\delta}^t\left|j^{\varepsilon}_s-j^{\varepsilon}_{t-\delta}\right|^{2p-2}_H\left|j^{\varepsilon}_s\right|^2_{H^1}\left|j^{\varepsilon}_{t-\delta}\right|_Hds\right\}_{J_1}+2p\left\{\int_{t-\delta}^t\left|j^{\varepsilon}_s-j^{\varepsilon}_{t-\delta}\right|^{2p-2}_H\left|\triangledown j^{\varepsilon}_{t-\delta}\right|^2_Hds\right\}_{J_2}\\
	&+2p\left\{\int_{t-\delta}^t\left|j^{\varepsilon}_s-j^{\varepsilon}_{t-\delta}\right|^{2p-2}_H\left| \theta^{\varepsilon}_s\right|^2_Hds\right\}_{J_3}\\
	&+M_p\left\{\int_{t-\delta}^t\int_{\left|z\right|_H<1}\left(\left|j^{\varepsilon}_s-j^{\varepsilon}_{t-\delta}\right|^{2p-2}_H\left|\sigma_1(j^{\varepsilon}_s,\theta^{\varepsilon}_s,z)\right|^2_H+\left|\sigma_1(j^{\varepsilon}_s,\theta^{\varepsilon}_s,z)\right|^{2p}_H\right)\eta_1(ds,dz)\right\}_{J_4}\\
	&+2p\left\{\int_{t-\delta}^t\int_{\left|z\right|_H<1}\left|j^{\varepsilon}_s-j^{\varepsilon}_{t-\delta}\right|^{2p-2}_H\left(\sigma_1(j^{\varepsilon}_s,\theta^{\varepsilon}_s,z),j^{\varepsilon}_s-j^{\varepsilon}_{t-\delta}\right)\tilde{\eta}_1(ds,dz)\right\}_{J_5}.
\end{align*}
For $J_1$, by (\ref{e4}) and Fubini's theorem, we have
\begin{align*}
	\mathbb{E}\int_{\delta}^{\tau_{M}\wedge T}J_1dt\leq &\mathbb{E}\int_{0}^{\tau_{M}\wedge T}\int_s^{s+\delta}\left|j^{\varepsilon}_s-j^{\varepsilon}_{t-\delta}\right|^{2p-2}_H\left|j^{\varepsilon}_s\right|^2_{H^1}\left|j^{\varepsilon}_{t-\delta}\right|_Hdtds\\
	\leq &C_M\mathbb{E}\int_{0}^{\tau_{M}\wedge T}\int_s^{s+\delta}\left|j^{\varepsilon}_s\right|^{2p-2}_H\left|j^{\varepsilon}_s\right|^2_{H^1}dtds\\
	\leq  &C_M\delta\mathbb{E}\int_{0}^{\tau_{M}\wedge T}\left|j^{\varepsilon}_s\right|^{2p-2}_H\left|j^{\varepsilon}_s\right|^2_{H^1}ds\\
	\leq &C_{T,M}\delta.
\end{align*}
For $J_2$, we have
\begin{align*}
	\mathbb{E}\int_{\delta}^{\tau_{M}\wedge T}J_2dt\leq &C_M\mathbb{E}\int_{\delta}^{\tau_{M}\wedge T}\int_{t-\delta}^t\left|j^{\varepsilon}_{t-\delta}\right|^{2p-2}_H\left|\triangledown j^{\varepsilon}_{t-\delta}\right|^2_Hdsdt\\
	\leq &C_M\delta\mathbb{E}\int_{\delta}^{\tau_{M}\wedge T}\left|j^{\varepsilon}_{t-\delta}\right|^{2p-2}_H\left|\triangledown j^{\varepsilon}_{t-\delta}\right|^2_Hdt\\
	\leq &C_{T,M}\delta.
\end{align*}
For $J_3$, we have
\begin{align*}
	\mathbb{E}\int_{\delta}^{\tau_{M}\wedge T}J_3dt\leq & C_p\int_{\delta}^{\tau_{M}\wedge T}\int_{t-\delta}^t\left(1+\mathbb{E}\left|j^{\varepsilon}_s\right|^{2p}_H+\mathbb{E}\left|j^{\varepsilon}_{t-\delta}\right|^{2p}_H+\mathbb{E}\left|\theta^{\varepsilon}_s\right|^{2p}_H\right)dsdt\\
	\leq & C_T\delta.
\end{align*}
For $J_4$, we have
\begin{align*}
	\mathbb{E}\int_{\delta}^{\tau_{M}\wedge T}J_4dt\leq & C_p \mathbb{E}\int_{\delta}^{\tau_{M}\wedge T}\int_{t-\delta}^t\left|j^{\varepsilon}_s-j^{\varepsilon}_{t-\delta}\right|^{2p}_Hdsdt\\
	&+C_p\mathbb{E}\int_{\delta}^{\tau_{M}\wedge T}\int_{t-\delta}^t\int_{\left|z\right|_H<1}\left|\sigma_1(j^{\varepsilon}_s,\theta^{\varepsilon}_s,z)\right|^{2p}_H\nu_1(dz)dsdt\\
	\leq & C_p\int_{\delta}^{\tau_{M}\wedge T}\int_{t-\delta}^t\left(1+\mathbb{E}\left|j^{\varepsilon}_s\right|^{2p}_H+\mathbb{E}\left|j^{\varepsilon}_{t-\delta}\right|^{2p}_H+\mathbb{E}\left|\theta^{\varepsilon}_s\right|^{2p}_H\right)dsdt\\
	\leq & C_T\delta.
\end{align*}
For $J_5$, using B-D-G's inequality, we have
\begin{align*}
	&\mathbb{E}\int_{\delta}^{\tau_{M}\wedge T}J_5dt\\
	\leq &C\mathbb{E}\int_{\delta}^{\tau_{M}\wedge T}\left(\int_{t-\delta}^t\int_{\left|z\right|_H<1}\left|j^{\varepsilon}_s-j^{\varepsilon}_{t-\delta}\right|^{4p-2}_H\left|\sigma_1(j^{\varepsilon}_s,\theta^{\varepsilon}_s,z)\right|^{2}_H\eta_1(ds,dz)\right)^{\frac{1}{2}}dt\\
	\leq & C_T\mathbb{E}\left(\int_{\delta}^{\tau_{M}\wedge T}\int_{t-\delta}^t\int_{\left|z\right|_H<1}\left|j^{\varepsilon}_s-j^{\varepsilon}_{t-\delta}\right|^{4p-2}_H\left|\sigma_1(j^{\varepsilon}_s,\theta^{\varepsilon}_s,z)\right|^{2}_H\eta_1(ds,dz)dt\right)^{\frac{1}{2}}\\
	\leq & C_{p,T}\mathbb{E}\left(\int_{\delta}^{\tau_{M}\wedge T}\int_{t-\delta}^t\int_{\left|z\right|_H<1}\left|j^{\varepsilon}_s\right|^{4p-2}_H\left|\sigma_1(j^{\varepsilon}_s,\theta^{\varepsilon}_s,z)\right|^{2}_H\eta_1(ds,dz)dt\right)^{\frac{1}{2}}\\
	&+C_{p,T}\mathbb{E}\left(\int_{\delta}^{\tau_{M}\wedge T}\int_{t-\delta}^t\int_{\left|z\right|_H<1}\left|j^{\varepsilon}_{t-\delta}\right|^{4p-2}_H\left|\sigma_1(j^{\varepsilon}_s,\theta^{\varepsilon}_s,z)\right|^{2}_H\eta_1(ds,dz)dt\right)^{\frac{1}{2}}\\
	\leq & C_{p,T}\mathbb{E}\left(\int_{0}^{\tau_{M}\wedge T}\int_{s}^{s+\delta}\int_{\left|z\right|_H<1}\left|j^{\varepsilon}_s\right|^{4p-2}_H\left|\sigma_1(j^{\varepsilon}_s,\theta^{\varepsilon}_s,z)\right|^{2}_H\eta_1(dt,dz)ds\right)^{\frac{1}{2}}\\
	&+C_{p,T}\mathbb{E}\left(\int_{\delta}^{\tau_{M}\wedge T}\int_{t-\delta}^t\int_{\left|z\right|_H<1}\left|j^{\varepsilon}_{t-\delta}\right|^{4p-2}_H\left|\sigma_1(j^{\varepsilon}_s,\theta^{\varepsilon}_s,z)\right|^{2}_H\eta_1(ds,dz)dt\right)^{\frac{1}{2}}\\
	\leq & C_{p,T}\mathbb{E}\left(\sup_{s\in[0,\tau_{M}\wedge T]}\left|j^{\varepsilon}_s\right|^{2p}_H\int_{0}^{\tau_{M}\wedge T}\int_{s}^{s+\delta}\int_{\left|z\right|_H<1}\left|j^{\varepsilon}_s\right|^{2p-2}_H\left|\sigma_1(j^{\varepsilon}_s,\theta^{\varepsilon}_s,z)\right|^{2}_H\eta_1(dt,dz)ds\right)^{\frac{1}{2}}\\
	&+C_{p,T}\mathbb{E}\left(\sup_{t\in[\delta,\tau_{M}\wedge T]}\left|j^{\varepsilon}_{t-\delta}\right|^{2p}_H\int_{\delta}^{\tau_{M}\wedge T}\int_{t-\delta}^t\int_{\left|z\right|_H<1}\left|j^{\varepsilon}_{t-\delta}\right|^{2p-2}_H\left|\sigma_1(j^{\varepsilon}_s,\theta^{\varepsilon}_s,z)\right|^{2}_H\eta_1(ds,dz)dt\right)^{\frac{1}{2}}\\
	\leq & C_{p,T}\left(\mathbb{E}\sup_{s\in[0,\tau_{M}\wedge T]}\left|j^{\varepsilon}_s\right|^{2p}_H\right)^{\frac{1}{2}}\left(\mathbb{E}\int_{0}^{\tau_{M}\wedge T}\int_{s}^{s+\delta}\int_{\left|z\right|_H<1}\left|j^{\varepsilon}_s\right|^{2p-2}_H\left|\sigma_1(j^{\varepsilon}_s,\theta^{\varepsilon}_s,z)\right|^{2}_H\eta_1(dt,dz)ds\right)^{\frac{1}{2}}\\
	&+C_{p,T}\left(\mathbb{E}\sup_{t\in[\delta,\tau_{M}\wedge T]}\left|j^{\varepsilon}_{t-\delta}\right|^{2p}_H\right)^{\frac{1}{2}}\left(\mathbb{E}\int_{\delta}^{\tau_{M}\wedge T}\int_{t-\delta}^t\int_{\left|z\right|_H<1}\left|j^{\varepsilon}_{t-\delta}\right|^{2p-2}_H\left|\sigma_1(j^{\varepsilon}_s,\theta^{\varepsilon}_s,z)\right|^{2}_H\eta_1(ds,dz)dt\right)^{\frac{1}{2}}\\
	\leq &C_{p,T}\left(\mathbb{E}\int_{0}^{\tau_{M}\wedge T}\int_{s}^{s+\delta}\left(1+\left|j^{\varepsilon}_s\right|^{2p}_H+\left|\theta^{\varepsilon}_s\right|^{2p}_H\right)dtds\right)^{\frac{1}{2}}\\
	&+ C_{p,T}\left(\mathbb{E}\int_{\delta}^{\tau_{M}\wedge T}\int_{t-\delta}^t\left(1+\left|j^{\varepsilon}_{t-\delta}\right|^{2p}_H+\left|j^{\varepsilon}_s\right|^{2p}_H+\left|\theta^{\varepsilon}_s\right|^{2p}_H\right)dsdt\right)^{\frac{1}{2}}\\
	\leq & C_{p,T}\delta^{\frac{1}{2}}.
\end{align*}
Hence, (\ref{e8.5.5}) hold.
\end{proof}

According to \cite[Proposition 10]{Justin},  (\ref{e4}) and (\ref{e8}) implies the following result:
\begin{theorem}\label{th3}
For any $t\in [0,\tau _M\wedge T]$ and $\left ( j_0,\theta_0\right )\in H^1\times H$, the family $\left \{\mathscr{L}\left ( j^{\varepsilon}\right )\right \}_{\varepsilon \in(0,1]}$ is tight in $\mathcal{D}\left ( \left [ 0,\tau _M\wedge T\right ],H\right )$, where $\mathscr{L}\left ( j^{\varepsilon}\right )$ denotes the distribution of $j^{\varepsilon}$ and $\mathcal{D}\left ( \left [ 0,\tau _M\wedge T\right ],H\right )$ represents the space of c\`{a}dl\`{a}g functions from $[0,\tau _M\wedge T]\rightarrow H$.
\end{theorem}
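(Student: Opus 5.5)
The tightness claim will follow from a compactness criterion in the Skorokhod space $\mathcal{D}([0,\tau_M\wedge T],H)$ of Aldous type, as in \cite[Proposition 10]{Justin}. That criterion needs two ingredients: a compact containment condition for the one-dimensional marginals, and a uniform (in $\varepsilon$) control on the modulus of continuity in probability. My plan is to obtain the first from the regularity estimate (\ref{e7}) and the second from the increment estimate (\ref{e8}).

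\textbf{Step 1: compact containment.} Because $H^1$ embeds compactly into $H$ on $\mathbb{T}^2$ (Rellich), the balls $K_R:=\{x\in H:\ |x|_{H^1}\le R\}$ are compact in $H$. By Chebyshev's inequality and (\ref{e7}), uniformly in $\varepsilon\in(0,1]$,
\begin{align*}
\mathbb{P}\Big(\sup_{t\in[0,\tau_M\wedge T]}|j^{\varepsilon}_t|_{H^1}>R\Big)\le \frac{C_{T,M}\left(1+|j_0|^2_{H^1}+|\theta_0|^2_H\right)}{R^2}.
\end{align*}
This tends to zero as $R\to\infty$. Hence for each $\eta>0$ there is a compact $K_R\subset H$ with $\mathbb{P}(j^{\varepsilon}_t\in K_R \ \forall t)\ge 1-\eta$ for all $\varepsilon$. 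The $H$-level bound (\ref{e4}) keeps the $H$-norm part under control as well.

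\textbf{Step 2: Aldous condition.} Let $\tau$ be an $\mathscr{F}_t$-stopping time with $\tau+\delta\le\tau_M\wedge T$. I need
\begin{align*}
\lim_{\delta\to0}\sup_{\varepsilon}\sup_{\tau}\mathbb{P}\left(|j^{\varepsilon}_{\tau+\delta}-j^{\varepsilon}_{\tau}|_H>\eta\right)=0.
\end{align*}
Lemma \ref{l1} is stated only for deterministic times, so I would redo its proof with $t-\delta$ replaced by $\tau$. The Itô expansion (\ref{e8.5.5.5}) on $[\tau,\tau+\delta]$ yields the same terms $I_1,\dots,I_5$. These are bounded by Hölder's inequality, using the stopped bounds (\ref{e7}) and $|j^\varepsilon|_H\le M$ before $\tau_M$, and the stochastic integral has zero mean by optional stopping. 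This gives $\mathbb{E}|j^{\varepsilon}_{\tau+\delta}-j^{\varepsilon}_\tau|^2_H\le C_{T,M}\delta^{1/2}$ uniformly in $\varepsilon$ and $\tau$, and Chebyshev's inequality then gives the Aldous condition.

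\textbf{Main obstacle.} The delicate point is this passage from deterministic times to stopping times, together with the fact that the $\theta^\varepsilon$ terms are controlled only through $\sup_t\mathbb{E}$, not $\mathbb{E}\sup_t$ (see the Remark around (\ref{xe11})). I would handle it by noting that the terms $I_3$, $I_4$, $I_5$ involve $\theta^\varepsilon$ only through time integrals over intervals of length $\delta$. Since $\tau+\delta\le T$, each such integral is dominated by $\int_\tau^{\tau+\delta}\le\int_0^T\mathbf{1}_{[\tau,\tau+\delta]}(s)\cdots ds$, and by Fubini these are bounded using $\sup_s\mathbb{E}|\theta^\varepsilon_s|^2_H$ and $\mathbb{E}\int_0^T|\theta^\varepsilon|^2_{H^1}ds$ from (\ref{e5}). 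For $I_4$, $|\theta|_H|\nabla j|_H$ is split by Cauchy–Schwarz to get a factor $\delta^{1/2}$. With Steps 1 and 2 in hand, \cite[Proposition 10]{Justin} yields tightness of $\{\mathscr{L}(j^\varepsilon)\}_{\varepsilon\in(0,1]}$ in $\mathcal{D}([0,\tau_M\wedge T],H)$.
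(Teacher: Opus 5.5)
Your route is the paper's route. The paper's entire proof is the citation of \cite[Proposition 10]{Justin} with (\ref{e4}) and (\ref{e8}).

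Your two refinements are correct, and they are in fact needed for an Aldous-type criterion to give tightness in $\mathcal{D}([0,\tau_M\wedge T],H)$ with the strong topology. (Read everything for the stopped processes $j^{\varepsilon}_{\cdot\wedge\tau_M}$ on $[0,T]$, since $\tau_M$ depends on $\varepsilon$.)
\begin{itemize}
\item Estimate (\ref{e4}) controls $\sup_t|j^{\varepsilon}_t|_H$ and only a time integral of $|\triangledown j^{\varepsilon}|^2_H$, so it gives no compact containment in $H$ at fixed times. Estimate (\ref{e7}) together with Rellich does give it.
\item Estimate (\ref{e8}) is proved only at deterministic times, whereas the Aldous condition concerns stopping times.
\end{itemize}
Your stopping-time versions of $I_1$ and $I_2$ go through via $\mathbb{E}\sup|j^{\varepsilon}|^2_{H^1}$. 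The same holds for $I_3$, using $|j^{\varepsilon}_s-j^{\varepsilon}_{\tau}|_H\le 2M$ so that $\theta^{\varepsilon}$ enters linearly, and for $I_4$, after pulling out $\sup|\triangledown j^{\varepsilon}|_H$. In both cases Cauchy--Schwarz on $\Omega\times[0,T]$ produces a factor $\delta^{1/2}$, e.g.
\[
\mathbb{E}\int_0^T\mathbf{1}_{[\tau,\tau+\delta]}(s)\,|\theta^{\varepsilon}_s|_H\,ds\le \delta^{\frac12}\Big(\mathbb{E}\int_0^T|\theta^{\varepsilon}_s|^2_H\,ds\Big)^{\frac12}.
\]

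The step that does not close as written is $I_5$. This is the compensator term
\[
\mathbb{E}\int_\tau^{\tau+\delta}\int_{|z|_H<1}|\sigma_1(j^{\varepsilon}_s,\theta^{\varepsilon}_s,z)|^2_H\,\nu_1(dz)\,ds\le C\,\mathbb{E}\int_\tau^{\tau+\delta}\big(1+|j^{\varepsilon}_s|^2_H+|\theta^{\varepsilon}_s|^2_H\big)ds,
\]
in which $\theta^{\varepsilon}$ enters quadratically with no spare factor. Fubini turns the $\theta$-part into $\int_0^T\mathbb{E}\big[\mathbf{1}_{\{\tau\le s<\tau+\delta\}}|\theta^{\varepsilon}_s|^2_H\big]ds$. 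The event $\{\tau\le s<\tau+\delta\}$ may be correlated with $|\theta^{\varepsilon}_s|_H$, since a stopping time can fire exactly when $\theta^{\varepsilon}$ is large. Hence your bounds on $\sup_s\mathbb{E}|\theta^{\varepsilon}_s|^2_H$ and $\mathbb{E}\int_0^T|\theta^{\varepsilon}|^2_{H^1}ds$ give only $O(1)$, not smallness in $\delta$ uniformly in $\varepsilon$ and $\tau$.

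What you need is uniform integrability of $|\theta^{\varepsilon}|^2_H$ on $\Omega\times[0,T]$, for instance (\ref{e5}) with some exponent $p>1$. Then H\"older gives
\[
\mathbb{E}\int_0^T\mathbf{1}_{[\tau,\tau+\delta]}(s)\,|\theta^{\varepsilon}_s|^2_H\,ds\le \delta^{1-\frac1p}\Big(T\sup_{s\in[0,T]}\mathbb{E}|\theta^{\varepsilon}_s|^{2p}_H\Big)^{\frac1p},
\]
and hence an Aldous bound of order $\delta^{\min\{1/2,\,1-1/p\}}$.

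With $p=1$ only, which is the case of the example in Section \ref{s6}, an additional idea is required. Conditioning on $\mathscr{F}_\tau$ and using the dissipation of the fast equation yields
\[
\mathbb{E}\int_\tau^{(\tau+\delta)\wedge\tau_M}|\theta^{\varepsilon}_s|^2_H\,ds\le C\varepsilon\,\mathbb{E}|\theta^{\varepsilon}_\tau|^2_H+C_M\delta.
\]
However, (\ref{xe11}) only bounds $\varepsilon\,\mathbb{E}|\theta^{\varepsilon}_\tau|^2_H$; it does not make it small.
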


%% -------------------------------------------------------------------

\section{Ergodicity of the frozen equation}\label{s4}
In this section, we focus on the frozen equation for the second equation of (\ref{e2}). For any initial value $\theta \in H$ and fixed $u\in H^1$, we consider the following equation:
\begin{equation}\label{e9}
\left\{
\begin{aligned}%对齐
&\frac{\partial\tilde{\theta}_t}{\partial t}+(u\cdot \triangledown )\tilde{\theta}_t=\triangle\tilde{\theta}_t+\int_{\left|z\right|_{H}<1}\sigma_2(j,\tilde{\theta}_t,z)\dot{\tilde{\eta} }_2(t,dz),\\
&\tilde{\theta}_0=\theta,
\end{aligned}
\right.
\end{equation}
where $j:=\triangledown^{\perp }\cdot u$, compensated Poisson random measure $\tilde{\eta}_2(t,x)=\eta_2(t,x)-\nu_2(x)\cdot t$ with intensity measure $\nu_2$.

It is easy to known that Eq. (\ref{e9}) has a unique solution $\tilde{\theta}_t^{u,\theta }$, and it is easy to check that $\theta_t=\tilde{\theta}_{\frac{t}{\varepsilon }}^{u^{\varepsilon} ,\theta_0 }$, which is the solution for the second equation of (\ref{e2}), if we take a time-scale transformation $t\rightarrow \varepsilon t$ and $u=u^{\varepsilon}$. Let $\left \{P_t^{u}\right \}_{t\geqslant 0}$ be the Markov transition semigroup of $\tilde{\theta}_t^{u,\theta }$, i.e. for any $\varphi \in \mathcal{B}_b(H)$,
\begin{align*}
P_t^{u}\varphi(\theta ):=\mathbb{E}\varphi(\tilde{\theta}_t^{u,\theta }),\quad \theta\in H,\quad t\geqslant 0.
\end{align*}

For Eq.(\ref{e9}), we declare the following lemma:
\begin{lemma}\label{l3}
For any $(\theta ,u)\in H\times H^1$, and for some $p\geq 1$, we have
\begin{align}\label{e10}
\mathbb{E}\left |\tilde{\theta }^{u,\theta}_t\right |^{2p}_H+\mathbb{E}\int_0^t\left |\tilde{\theta }^{u,\theta}_s\right |^{2p-2}_H\left|\triangledown\tilde{\theta }^{u,\theta}_s \right|^2_Hds\leqslant \left | \theta \right |^{2p}_He^{-\lambda_{p,\gamma} t}+C_p\left ( 1+ \left | j\right |^{2p}_H\right ),
\end{align}
where $\lambda_{p,\gamma}>0$ has already appeared in (\ref{e5.5}).
In addition, for any $\theta_1,\theta_2\in H$,
\begin{align}\label{e11}
\mathbb{E}\left|\tilde{\theta }^{u,\theta_1}_t-\tilde{\theta }^{u,\theta_2}_t\right|^{2p}_H\leqslant \left |\theta_1-\theta_2 \right |^{2p}_He^{-\lambda_p t}.
\end{align}
Furthermore, for any $u_1,u_2\in H^1$,
\begin{align}\label{e12}
\nonumber\mathbb{E}\left|\tilde{\theta }^{u_1,\theta}_t-\tilde{\theta }^{u_2,\theta}_t\right|^{2p}_H\leq& C\int_0^te^{-\lambda_{p,\gamma}(t-s)}\mathbb{E}\left|\tilde{\theta }^{u_1,\theta}_s-\tilde{\theta }^{u_2,\theta}_s\right|^{2p-2}_H\left|\triangledown\tilde{\theta }^{u_2,\theta}_s\right|^{2}_{H}ds\cdot \left|j_1-j_2\right|^{2}_{H}\\
	&+C\kappa\left(\left|j_1-j_2\right|^{2p}_H\right).
\end{align}
\end{lemma}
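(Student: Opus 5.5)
All three estimates will be obtained by applying the Itô formula for $|\cdot|^{2p}_H$ with Lévy noise, exactly as in the proof of Theorem \ref{th1}. The difference is that the frozen equation (\ref{e9}) runs on the unscaled time, so no factor $\varepsilon^{-1}$ appears and the velocity $u$ is fixed.

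For (\ref{e10}), we apply Itô's formula to $|\tilde\theta^{u,\theta}_t|^{2p}_H$. The transport term vanishes because $((u\cdot\triangledown)\tilde\theta,\tilde\theta)=0$. The Laplacian term gives $-2p|\tilde\theta|^{2p-2}_H|\triangledown\tilde\theta|^2_H$. We split this as a fraction $\gamma$ kept on the left and a fraction $1-\gamma$ bounded via Poincaré by $-2p(1-\gamma)\lambda|\tilde\theta|^{2p}_H$. The compensated martingale term has zero expectation. The jump correction is controlled by (\ref{e3}), Young's inequality $|\tilde\theta|^{2p-2}|\sigma_2|^2\le\frac{p-1}{p}|\tilde\theta|^{2p}+\frac1p|\sigma_2|^{2p}$, and the growth bound of Assumption \ref{a3} with $j$ frozen. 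This yields the same differential inequality as (\ref{e5.5}) with $\varepsilon=1$:
\[
\frac{d}{dt}\mathbb E|\tilde\theta_t|^{2p}_H+2p\gamma\,\mathbb E|\tilde\theta_t|^{2p-2}_H|\triangledown\tilde\theta_t|^2_H\le-\lambda_{p,\gamma}\mathbb E|\tilde\theta_t|^{2p}_H+C(1+|j|^{2p}_H).
\]
Gronwall then gives the decaying term plus $C_p(1+|j|^{2p}_H)/\lambda_{p,\gamma}$. To obtain the bound on the gradient integral on $[0,t]$ that is uniform in $t$, we integrate the inequality directly without the $-\lambda$ term. Strictly this gives $C(1+|j|^{2p})t$, which is harmless for the later use. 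Otherwise one may use $\lambda_{p,\gamma}$ replaced by a smaller constant, keeping part of the dissipation on the left.

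For (\ref{e11}), set $\rho_t=\tilde\theta^{u,\theta_1}_t-\tilde\theta^{u,\theta_2}_t$. Since $u$ is common to both solutions, $\rho$ solves a linear transport–diffusion equation with noise $\sigma_2(j,\tilde\theta^1,z)-\sigma_2(j,\tilde\theta^2,z)$, and $((u\cdot\triangledown)\rho,\rho)=0$. Itô's formula, (\ref{e3}), Young's inequality, and the Lipschitz part of Assumption \ref{a3} give $\frac{d}{dt}\mathbb E|\rho|^{2p}\le-\lambda_p\mathbb E|\rho|^{2p}$. The $\kappa$ term drops out since $j_1=j_2$, and Assumption \ref{a4} is used with the full dissipation $\gamma=0$. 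Gronwall finishes the argument.

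For (\ref{e12}), set $w_t=\tilde\theta^{u_1,\theta}_t-\tilde\theta^{u_2,\theta}_t$, which satisfies
\[
dw=\triangle w\,dt-(u_1\cdot\triangledown)w\,dt-((u_1-u_2)\cdot\triangledown)\tilde\theta^{u_2,\theta}\,dt+\int(\sigma_2(j_1,\cdot)-\sigma_2(j_2,\cdot))\tilde\eta_2(dt,dz).
\]
This step is the main obstacle, because of the cross transport term $2p|w|^{2p-2}(((u_1-u_2)\cdot\triangledown)\tilde\theta^{u_2},w)$. We plan to use antisymmetry to move the derivative onto $w$ and then apply (\ref{e8.5}) or Ladyzhenskaya. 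In 2D, $|u_1-u_2|_{L^4}\le C|u_1-u_2|_{H^1}\le C|j_1-j_2|_H$. This gives a bound of the form $|w|^{2p-2}|j_1-j_2|_H|\triangledown\tilde\theta^{u_2}|_{H}|w|_{L^4}$ (or with $|\triangledown w|$). The $\triangledown w$ factor is absorbed into the dissipation $\gamma|w|^{2p-2}|\triangledown w|^2$, leaving $C|w|^{2p-2}|\triangledown\tilde\theta^{u_2}|^2_H|j_1-j_2|^2_H$. The jump terms are handled by (\ref{e3}), Young's inequality, and Assumption \ref{a3}: the Lipschitz part in $w$ is absorbed into $-\lambda_{p,\gamma}\mathbb E|w|^{2p}$ via Assumption \ref{a4}, and a remainder $C\kappa(|j_1-j_2|^{2p}_H)$ is left. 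The variation of constants formula with $e^{-\lambda_{p,\gamma}(t-s)}$ then gives the integral term in (\ref{e12}). Integrating the constant $\kappa$ term against the exponential gives $C\kappa(|j_1-j_2|^{2p})$ with $C=1/\lambda_{p,\gamma}$.
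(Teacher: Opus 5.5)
Your treatment of (\ref{e11}) and (\ref{e12}) follows the paper's proof: It\^{o}'s formula for $|\cdot|^{2p}_H$, antisymmetry to remove the transport term, (\ref{e3}) plus pointwise Young for the jump correction, and the $L_{\sigma_2}$/$\kappa$ split of Assumption \ref{a3} with Assumption \ref{a4} giving the rate. For (\ref{e12}) you then bound the cross term by $C|j_1-j_2|_H|\triangledown\tilde\theta^{u_2,\theta}_t|_H|w_t|_{H^1}$, absorb it into the dissipation, and use variation of constants. The paper uses (\ref{e8.5}) with the derivative on $w$; your $L^4$ version is equivalent. One caveat you share with the paper: for $p>1$, integrating the pointwise Young inequality against $\nu_2$ produces $\frac{p-1}{p}\nu_2(\{|z|_H<1\})|\tilde\theta|^{2p}_H$. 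So both arguments tacitly need $\nu_2(\{|z|_H<1\})\le 1$ to arrive at $\lambda_p$ and $\lambda_{p,\gamma}$. The genuine gap is the second term on the left of (\ref{e10}).

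Write $\tilde\theta_s:=\tilde\theta^{u,\theta}_s$. The statement requires $\mathbb{E}\int_0^t|\tilde\theta_s|^{2p-2}_H|\triangledown\tilde\theta_s|^2_H\,ds$ to be bounded uniformly in $t$.
\begin{itemize}
\item Direct integration gives only $C(1+|j|^{2p}_H)\,t$, which is not (\ref{e10}).
\item Lowering the decay constant to some $\lambda'<\lambda_{p,\gamma}$ does not help either. Gronwall then controls only $\int_0^te^{-\lambda'(t-s)}\mathbb{E}|\tilde\theta_s|^{2p-2}_H|\triangledown\tilde\theta_s|^2_H\,ds$, never the unweighted integral.
\end{itemize}
No argument can close this, because the bound is false in general. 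Take $p=1$ and $\sigma_2(j,\theta,z)=h(z)$ with $0<\int|h|^2_H\,d\nu_2<\infty$; Assumptions \ref{a3}--\ref{a4} hold with $L_{\sigma_2}=0$. It\^{o}'s formula gives $\mathbb{E}|\tilde\theta_t|^2_H+2\int_0^t\mathbb{E}|\triangledown\tilde\theta_s|^2_H\,ds=|\theta|^2_H+t\int|h|^2_H\,d\nu_2$. Since $\mathbb{E}|\tilde\theta_t|^2_H$ stays bounded, the integral grows linearly in $t$.

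The paper's proof obtains the uniform bound only by citing ``Gronwall's inequality'', so it has the same hole. What the differential inequality actually yields is
\begin{align*}
\mathbb{E}|\tilde\theta_t|^{2p}_H+2p\gamma\int_0^te^{-\lambda_{p,\gamma}(t-s)}\mathbb{E}|\tilde\theta_s|^{2p-2}_H|\triangledown\tilde\theta_s|^2_H\,ds\le|\theta|^{2p}_He^{-\lambda_{p,\gamma}t}+C_p\left(1+|j|^{2p}_H\right).
\end{align*}

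The linear growth is also not ``harmless for the later use''. Proposition \ref{p1} relies precisely on the boundedness of $t\mapsto\int_0^t(\cdots)\,ds$ to invoke Lemma \ref{l4} and obtain (\ref{e16})--(\ref{e17}). In the example above, the quantity in (\ref{e16}) — which for $p=1$ is exactly the convolution in (\ref{e12}) — converges to a strictly positive limit rather than to $0$.
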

\begin{proof}
By It\^{o}'s formula, we have
\begin{align*}
	&\left|\tilde{\theta }^{u,\theta}_t\right|^{2p}_H\\
	=&\left|\theta\right|^{2p}_H+2p\int_0^t\left|\tilde{\theta }^{u,\theta}_s\right|^{2(p-1)}_H\left \langle \triangle \tilde{\theta }^{u,\theta}_s,\tilde{\theta }^{u,\theta}_s \right \rangle ds\\
	&+2p\int_0^t\int_{\left|z\right|_H<1}\left|\tilde{\theta }^{u,\theta}_s\right|^{2(p-1)}_H\left(\tilde{\theta }^{u,\theta}_s,\sigma_2(j,\tilde{\theta }^{u,\theta}_s,z)\right)\tilde{\eta}_2(ds,dz)\\
	&+\int_0^t\int_{\left|z\right|_H<1}\left [\left|\tilde{\theta }^{u,\theta}_s+\sigma_2(j,\tilde{\theta }^{u,\theta}_s,z)\right|^{2p}_H-\left|\tilde{\theta }^{u,\theta}_s\right|^{2p}_H-2p\left|\tilde{\theta }^{u,\theta}_s\right|^{2(p-1)}_H\left(\tilde{\theta }^{u,\theta}_s,\sigma_2(j,\tilde{\theta }^{u,\theta}_s,z)\right)  \right ] \eta_2(ds,dz).
\end{align*}
Thereby, we have
\begin{align*}
\mathbb{E}\left|\tilde{\theta }^{u,\theta}_t\right|^{2p}_H=&\left|\theta\right|^{2p}_H+2p\mathbb{E}\int_0^t\left|\tilde{\theta }^{u,\theta}_s\right|^{2(p-1)}_H\left \langle \triangle \tilde{\theta }^{u,\theta}_s,\tilde{\theta }^{u,\theta}_s \right \rangle ds+\mathbb{E}\int_0^t\int_{\left|z\right|_H<1}\left [\left|\tilde{\theta }^{u,\theta}_s+\sigma_2(j,\tilde{\theta }^{u,\theta}_s,z)\right|^{2p}_H\right.\\
	&\left.-\left|\tilde{\theta }^{u,\theta}_s\right|^{2p}_H-2p\left|\tilde{\theta }^{u,\theta}_s\right|^{2(p-1)}_H\left(\tilde{\theta }^{u,\theta}_s,\sigma_2(j,\tilde{\theta }^{u,\theta}_s,z)\right)  \right ] \nu_2(dz)ds.
\end{align*}
And so
\begin{align*}
	&\frac{d}{dt}\mathbb{E}\left|\tilde{\theta }^{u,\theta}_t\right|^{2p}_H+2p\gamma\mathbb{E}\left|\tilde{\theta }^{u,\theta}_t\right|^{2p-2}_H\left|\triangledown\tilde{\theta }^{u,\theta}_t \right|^2_H\\
	\leq & -2p(1-\gamma)\lambda\mathbb{E}\left|\tilde{\theta }^{u,\theta}_t\right|^{2p}_H+M_p\mathbb{E}\int_{\left|z\right|_H<1}\left(\left|\tilde{\theta }^{u,\theta}_t\right|^{2(p-1)}_H\left|\sigma_2(j,\tilde{\theta }^{u,\theta}_t,z)\right|^2_H+\left|\sigma_2(j,\tilde{\theta }^{u,\theta}_t,z)\right|^{2p}_H\right)\nu_2(dz)\\
	\leq & -2p(1-\gamma)\lambda\mathbb{E}\left|\tilde{\theta }^{u,\theta}_t\right|^{2p}_H+M_p\mathbb{E}\int_{\left|z\right|_H<1}\left(\frac{p-1}{p}\left|\tilde{\theta }^{u,\theta}_t\right|^{2p}_H+\frac{1}{p}\left|\sigma_2(j,\tilde{\theta }^{u,\theta}_t,z)\right|^{2p}_H\right)\nu_2(dz)\\
	&+M_pL_{\sigma_2}\mathbb{E}\left|\tilde{\theta }^{u,\theta}_t\right|^{2p}_H+M_pC_1\left(1+\left|j\right|^{2p}_H\right)\\
	\leq &-\lambda_{p,\gamma}\mathbb{E}\left|\tilde{\theta }^{u,\theta}_t\right|^{2p}_H+M_pC_1\left(1+\left|j\right|^{2p}_H\right).
\end{align*}
Using Gronwall's inequality, we have
\begin{align*}
\mathbb{E}\left |\tilde{\theta }^{u,\theta}_t\right |^{2p}_H+\mathbb{E}\int_0^t\left |\tilde{\theta }^{u,\theta}_s\right |^{2p-2}_H\left|\triangledown\tilde{\theta }^{u,\theta}_s \right|^2_Hds\leqslant \left | \theta \right |^{2p}_He^{-\lambda_{p,\gamma} t}+M_pC_1\left ( 1+ \left | j\right |^{2p}_H\right ).
\end{align*}
Furthermore, for any $\theta_1,\theta_2\in H$, we have
\begin{align*}
&\left|\tilde{\theta }^{u,\theta_1}_t-\tilde{\theta }^{u,\theta_2}_t\right|^{2p}_H\\
=&\left|\theta_1-\theta_2\right|^{2p}_H+2p\int_0^t\left|\tilde{\theta }^{u,\theta_1}_s-\tilde{\theta }^{u,\theta_2}_s\right|^{2p-2}_H\left \langle \triangle\left(\tilde{\theta }^{u,\theta_1}_s-\tilde{\theta }^{u,\theta_2}_s\right),\tilde{\theta }^{u,\theta_1}_s-\tilde{\theta }^{u,\theta_2}_s \right \rangle ds\\
&+2p\int_0^t\int_{\left|z\right|_H<1}\left|\tilde{\theta }^{u,\theta_1}_s-\tilde{\theta }^{u,\theta_2}_s\right|^{2p-2}_H\left(\tilde{\theta }^{u,\theta_1}_s-\tilde{\theta }^{u,\theta_2}_s,\sigma_2(j,\tilde{\theta }^{u,\theta_1}_s,z)-\sigma_2(j,\tilde{\theta }^{u,\theta_2}_s,z)\right)\tilde{\eta}_2(ds,dz)\\
&+\int_0^t\int_{\left|z\right|_H<1}\left[\left|\left(\tilde{\theta }^{u,\theta_1}_s-\tilde{\theta }^{u,\theta_2}_s\right)+\left(\sigma_2(j,\tilde{\theta }^{u,\theta_1}_s,z)-\sigma_2(j,\tilde{\theta }^{u,\theta_2}_s,z)\right)\right|^{2p}_H-\left|\tilde{\theta }^{u,\theta_1}_s-\tilde{\theta }^{u,\theta_2}_s\right|^{2p}_H\right.\\
&\left.-2p\left|\tilde{\theta }^{u,\theta_1}_s-\tilde{\theta }^{u,\theta_2}_s\right|^{2p-2}_H\left(\tilde{\theta }^{u,\theta_1}_s-\tilde{\theta }^{u,\theta_2}_s,\sigma_2(j,\tilde{\theta }^{u,\theta_1}_s,z)-\sigma_2(j,\tilde{\theta }^{u,\theta_2}_s,z)\right)\right]\eta_2(ds,dz).
\end{align*}
According to Assumptions \ref{a3}, \ref{a4} and (\ref{e3}), we have
\begin{align*}
&\frac{d}{dt}\mathbb{E}\left|\tilde{\theta }^{u,\theta_1}_t-\tilde{\theta }^{u,\theta_2}_t\right|^{2p}_H\\
\leq& -2p\lambda \mathbb{E}\left|\tilde{\theta }^{u,\theta_1}_t-\tilde{\theta }^{u,\theta_2}_t\right|^{2p}_H+M_p\mathbb{E}\int_{\left|z\right|_H<1}\left(\left|\tilde{\theta }^{u,\theta_1}_t-\tilde{\theta }^{u,\theta_2}_t\right|^{2p-2}_H\left|\sigma_2(j,\tilde{\theta }^{u,\theta_1}_t,z)-\sigma_2(j,\tilde{\theta }^{u,\theta_2}_t,z)\right|^2_H\right.\\
&\left.+\left|\sigma_2(j,\tilde{\theta }^{u,\theta_1}_t,z)-\sigma_2(j,\tilde{\theta }^{u,\theta_2}_t,z)\right|^{2p}_H\right)\nu_2(dz)\\
\leq &-\left(2p\lambda-\frac{M_p(p-1)}{p}\right)\mathbb{E}\left|\tilde{\theta }^{u,\theta_1}_t-\tilde{\theta }^{u,\theta_2}_t\right|^{2p}_H+\frac{M_p(p+1)}{p}\mathbb{E}\int_{\left|z\right|_H<1}\left|\sigma_2(j,\tilde{\theta }^{u,\theta_1}_t,z)-\sigma_2(j,\tilde{\theta }^{u,\theta_2}_t,z)\right|^{2p}_H\nu_2(dz)\\
\leq &-\lambda_p \mathbb{E}\left|\tilde{\theta }^{u,\theta_1}_t-\tilde{\theta }^{u,\theta_2}_t\right|^{2p}_H.
\end{align*}
Gronwall's inequality implies that
\begin{align*}
\mathbb{E}\left|\tilde{\theta }^{u,\theta_1}_t-\tilde{\theta }^{u,\theta_2}_t\right|^{2p}_H\leqslant \left |\theta_1-\theta_2 \right |^{2p}_He^{-\lambda_p t}.
\end{align*}
Similarly, for any $u_1,u_2\in H^2$, we have
\begin{align*}
&\left|\tilde{\theta }^{u_1,\theta}_t-\tilde{\theta }^{u_2,\theta}_t\right|^{2p}_H\\
=&-2p\int_0^t\left|\tilde{\theta }^{u_1,\theta}_s-\tilde{\theta }^{u_2,\theta}_s\right|^{2p-2}_H\left((u_1\cdot \triangledown)\tilde{\theta }^{u_1,\theta}_s-(u_2\cdot \triangledown)\tilde{\theta }^{u_2,\theta}_s,\tilde{\theta }^{u_1,\theta}_s-\tilde{\theta }^{u_2,\theta}_s\right)ds\\
&+2p\int_0^t\left|\tilde{\theta }^{u_1,\theta}_s-\tilde{\theta }^{u_2,\theta}_s\right|^{2p-2}_H\left \langle \triangle\left(\tilde{\theta }^{u_1,\theta}_s-\tilde{\theta }^{u_2,\theta}_s\right),\tilde{\theta }^{u_1,\theta}_s-\tilde{\theta }^{u_2,\theta}_s \right \rangle ds\\
&+2p\int_0^t\int_{\left|z\right|_H<1}\left|\tilde{\theta }^{u_1,\theta}_s-\tilde{\theta }^{u_2,\theta}_s\right|^{2p-2}_H\left(\tilde{\theta }^{u_1,\theta}_s-\tilde{\theta }^{u_2,\theta}_s,\sigma_2(j_1,\tilde{\theta }^{u_1,\theta}_s,z)-\sigma_2(j_2,\tilde{\theta }^{u_2,\theta}_s,z)\right)\tilde{\eta}_2(ds,dz)\\
&+\int_0^t\int_{\left|z\right|_H<1}\left[\left|\left(\tilde{\theta }^{u_1,\theta}_s-\tilde{\theta }^{u_2,\theta}_s\right)+\left(\sigma_2(j_1,\tilde{\theta }^{u_1,\theta}_s,z)-\sigma_2(j_2,\tilde{\theta }^{u_2,\theta}_s,z)\right)\right|^{2p}_H-\left|\tilde{\theta }^{u_1,\theta}_s-\tilde{\theta }^{u_2,\theta}_s\right|^{2p}_H\right.\\
&\left.-2p\left|\tilde{\theta }^{u_1,\theta}_s-\tilde{\theta }^{u_2,\theta}_s\right|^{2p-2}_H\left(\tilde{\theta }^{u_1,\theta}_s-\tilde{\theta }^{u_2,\theta}_s,\sigma_2(j_1,\tilde{\theta }^{u_1,\theta}_s,z)-\sigma_2(j_2,\tilde{\theta }^{u_2,\theta}_s,z)\right)\right]\eta_2(ds,dz).
\end{align*}
By (\ref{e8.5}), we can obtain that
\begin{align*}
	&\frac{d}{dt}\mathbb{E}\left|\tilde{\theta }^{u_1,\theta}_t-\tilde{\theta }^{u_2,\theta}_t\right|^{2p}_H+2p\gamma\mathbb{E}\left|\tilde{\theta }^{u_1,\theta}_t-\tilde{\theta }^{u_2,\theta}_t\right|^{2p-2}_H\left|\triangledown\left(\tilde{\theta }^{u_1,\theta}_t-\tilde{\theta }^{u_2,\theta}_t\right)\right|^2_H\\
	\leq & -2p(1-\gamma)\lambda\mathbb{E}\left|\tilde{\theta }^{u_1,\theta}_t-\tilde{\theta }^{u_2,\theta}_t\right|^{2p}_H+2p\mathbb{E}\left|\tilde{\theta }^{u_1,\theta}_t-\tilde{\theta }^{u_2,\theta}_t\right|^{2p-2}_H\left(\left((u_1-u_2)\cdot \triangledown\right)\left(\tilde{\theta }^{u_1,\theta}_t-\tilde{\theta }^{u_2,\theta}_t\right),\tilde{\theta }^{u_2,\theta}_t\right)\\
	&+M_p\mathbb{E}\int_{\left|z\right|_H<1}\left(\left|\tilde{\theta }^{u_1,\theta}_t-\tilde{\theta }^{u_2,\theta}_t\right|^{2p-2}_H\left|\sigma_2(j_1,\tilde{\theta }^{u_1,\theta}_t,z)-\sigma_2(j_2,\tilde{\theta }^{u_2,\theta}_t,z)\right|^2_H\right.\\
	&\left.+\left|\sigma_2(j_1,\tilde{\theta }^{u_1,\theta}_t,z)-\sigma_2(j_2,\tilde{\theta }^{u_2,\theta}_t,z)\right|^{2p}_H\right)\nu_2(dz)\\
	\leq & -\left(2p(1-\gamma)-\frac{M(p-1)}{p}\right)\lambda\mathbb{E}\left|\tilde{\theta }^{u_1,\theta}_t-\tilde{\theta }^{u_2,\theta}_t\right|^{2p}_H+2p\mathbb{E}\left|\tilde{\theta }^{u_1,\theta}_t-\tilde{\theta }^{u_2,\theta}_t\right|^{2p-2}_H\left|\tilde{\theta }^{u_1,\theta}_t-\tilde{\theta }^{u_2,\theta}_t\right|_{H^1}\\
	&\cdot \left|\triangledown\tilde{\theta }^{u_2,\theta}_t\right|_{H}\left|u_1-u_2\right|_{H^1}+\frac{M_p(p+1)}{p}\mathbb{E}\int_{\left|z\right|_H<1}\left|\sigma_2(j_1,\tilde{\theta }^{u_1,\theta}_t,z)-\sigma_2(j_1,\tilde{\theta }^{u_1,\theta_2}_t,z)\right|^{2p}_H\nu_2(dz).		
\end{align*}
Using Young's inequality, and for $\gamma$ small enough, we have
\begin{align*}
\frac{d}{dt}\mathbb{E}\left|\tilde{\theta }^{u_1,\theta}_t-\tilde{\theta }^{u_2,\theta}_t\right|^{2p}_H\leq &  -\left(2p(1-\gamma)-\frac{M(p-1)}{p}-\frac{M_p(p+1)L_{\sigma_2}}{p}\right)\lambda\mathbb{E}\left|\tilde{\theta }^{u_1,\theta}_t-\tilde{\theta }^{u_2,\theta}_t\right|^{2p}_H\\
&+C_{p,\gamma}2p\mathbb{E}\left|\tilde{\theta }^{u_1,\theta}_t-\tilde{\theta }^{u_2,\theta}_t\right|^{2p-2}_H\left|\triangledown\tilde{\theta }^{u_2,\theta}_t\right|^2_{H}\left|u_1-u_2\right|^2_{H^1}+\kappa\left(\left|j_1-j_2\right|^{2p}_H\right)\\
\leq &-\lambda_{p,\gamma}\mathbb{E}\left|\tilde{\theta }^{u_1,\theta}_t-\tilde{\theta }^{u_2,\theta}_t\right|^{2p}_H+C_{p,\gamma}\mathbb{E}\left|\tilde{\theta }^{u_1,\theta}_t-\tilde{\theta }^{u_2,\theta}_t\right|^{2p-2}_H\left|\tilde{\theta }^{u_2,\theta}_t\right|^{2}_{H^1}\left|j_1-j_2\right|^{2}_{H}\\
&+C\kappa\left(\left|j_1-j_2\right|^{2p}_H\right).
\end{align*}
By Gronwall's inequality and (\ref{e10}), we have
\begin{align*}
	\mathbb{E}\left|\tilde{\theta }^{u_1,\theta}_t-\tilde{\theta }^{u_2,\theta}_t\right|^{2p}_H\leq& C\int_0^te^{-\lambda_{p,\gamma}(t-s)}\mathbb{E}\left|\tilde{\theta }^{u_1,\theta}_s-\tilde{\theta }^{u_2,\theta}_s\right|^{2p-2}_H\left|\triangledown\tilde{\theta }^{u_2,\theta}_s\right|^{2}_{H}ds\cdot \left|j_1-j_2\right|^{2}_{H}\\
	&+C\kappa\left(\left|j_1-j_2\right|^{2p}_H\right).
\end{align*}
\end{proof}
Next, we will introduce the ergodicity of $\left \{P_t^{u}\right \}_{t\geqslant 0}$. We have reached the following conclusion:
\begin{theorem}\label{th4}
For any $u\in H^1$, $\left \{P_t^{u}\right \}_{t\geqslant 0}$ has a unique ergodic invariant measure $\pi^{u}(\cdot )$. Moreover,
\begin{align}\label{e13}
\int _{H}{\left | \theta \right |^{2p}_H\pi ^{u} (d\theta  )}\leqslant C\left ( 1+\left | j\right |^{2p}_H\right ).
\end{align}
\end{theorem}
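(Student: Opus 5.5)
Fix $u\in H^1$ (so $j=\triangledown^\perp\cdot u\in H$ is fixed). The plan is to obtain existence via Krylov--Bogoliubov from the uniform moment bound (\ref{e10}), uniqueness and ergodicity from the exponential contraction (\ref{e11}), and the moment estimate (\ref{e13}) by passing (\ref{e10}) to the limit under $\pi^u$.

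\emph{Existence.} Consider the time-averaged measures
\[
\mu_T(\cdot):=\frac1T\int_0^T P^u_t(0,\cdot)\,dt ,
\]
i.e.\ the laws of $\tilde\theta^{u,0}_t$ averaged over $[0,T]$. Integrating (\ref{e10}) (with $p=1$ and the gradient term) over $[0,T]$ gives
\[
\frac1T\int_0^T\mathbb{E}\big|\tilde\theta^{u,0}_t\big|^2_{H^1}dt\le C\big(1+|j|^2_H\big),
\]
uniformly in $T$. Since $H^1\hookrightarrow H$ is compact on $\mathbb{T}^2$, Chebyshev's inequality shows $\{\mu_T\}_{T\ge1}$ is tight in $H$. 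The Feller property of $P^u_t$ follows from (\ref{e11}): for bounded Lipschitz $\varphi$, $|P^u_t\varphi(\theta_1)-P^u_t\varphi(\theta_2)|\le \|\varphi\|_{Lip}|\theta_1-\theta_2|_H$. Hence any weak limit point of $\mu_T$ is invariant by the Krylov--Bogoliubov theorem. This Feller/tightness step requires care because of the transport term $(u\cdot\triangledown)\tilde\theta$ with $u$ only in $H^1$; I expect it to be the main technical point. However, (\ref{e10}) already encodes the cancellation $((u\cdot\triangledown)\tilde\theta,\tilde\theta)=0$, so no extra regularity of $u$ should be needed.

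\emph{Uniqueness and ergodicity.} Let $\pi_1,\pi_2$ be invariant measures, each with finite $2p$-moment (first prove (\ref{e13}) for every invariant measure, see below). For bounded Lipschitz $\varphi$, invariance and (\ref{e11}) with Jensen give
\[
\Big|\int\varphi\,d\pi_1-\int\varphi\,d\pi_2\Big|\le \iint \big|P^u_t\varphi(\theta_1)-P^u_t\varphi(\theta_2)\big|\,\pi_1(d\theta_1)\pi_2(d\theta_2)\le \|\varphi\|_{Lip}\,e^{-\lambda_p t/2p}\iint|\theta_1-\theta_2|_H\,\pi_1\pi_2 ,
\]
and the right-hand side tends to $0$ as $t\to\infty$. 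So $\pi_1=\pi_2$. A unique invariant measure is automatically ergodic, being an extreme point of the convex set of invariant measures. Moreover we get the mixing estimate
\[
\Big|P^u_t\varphi(\theta)-\int\varphi\,d\pi^u\Big|\le C\|\varphi\|_{Lip}\,e^{-\lambda_p t/2p}\big(1+|\theta|_H+|j|_H\big),
\]
which will be used later.

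\emph{Moment bound (\ref{e13}).} For $N>0$, the function $\theta\mapsto|\theta|^{2p}_H\wedge N$ is bounded and continuous. By invariance and (\ref{e10}),
\[
\int(|\theta|^{2p}_H\wedge N)\,\pi^u(d\theta)=\int P^u_t(|\cdot|^{2p}_H\wedge N)\,d\pi^u\le\int\Big(\big(|\theta|^{2p}_He^{-\lambda_{p,\gamma}t}+C(1+|j|^{2p}_H)\big)\wedge N\Big)\pi^u(d\theta).
\]
Letting $t\to\infty$ by dominated convergence (the integrand is bounded by $N$), then $N\to\infty$ by monotone convergence, yields (\ref{e13}). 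The same argument applies to an arbitrary invariant measure, which justifies the finiteness of moments used in the uniqueness step.
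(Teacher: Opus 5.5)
Your proof is correct and follows the paper's route. You get existence from Krylov--Bogoliubov driven by (\ref{e10}), the moment bound (\ref{e13}) from invariance plus (\ref{e10}), and uniqueness (hence ergodicity) from the contraction (\ref{e11}) combined with (\ref{e13}); you also supply the tightness and Feller details the paper only cites. Your truncation $|\theta|^{2p}_H\wedge N$ followed by $t\to\infty$ is cleaner than the paper's absorption at $t=\ln 2/\lambda_{p,\gamma}$, which tacitly assumes $\int_H|\theta|^{2p}_H\,\pi^u(d\theta)<\infty$, and your rate $e^{-\lambda_p t/(2p)}$ is the one (\ref{e11}) actually gives, whereas the paper writes $e^{-\lambda_p t/p}$.
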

\begin{proof}
According to the classical Bogoliubov-Krylov argument, (\ref{e10}) imples the existence of invariant measure. Moreover, let $\pi ^{u} (\cdot )$ be an invariant measure of $\left \{P_t^{u}\right \}_{t\geqslant 0}$, we have
\begin{align*}
\int _{H}{\left | \theta \right |^{2p}_H\pi ^{u} (d\theta  )}&=\int _{H}{\mathbb{E}\left | \tilde{\theta }_t^{u,\theta } \right |^{2p}_H\pi ^{u} (d\theta  )}\\
&\leq \int _{H}{\left | \theta \right |^{2p}_He^{-\lambda_{p,\gamma} t}\pi ^{u} (d\theta  )}+ C_p\left ( 1+\left | j\right |^{2p}_H\right ).
\end{align*}
Choose $t=-\frac{\ln \frac{1}{2}}{\lambda_{p,\gamma}}$, and (\ref{e13}) hold.

For the uniqueness of invariant measure, as shown in \cite{Da}, we only need to prove that for any Lipschitz-continuous function $\varphi :H\rightarrow \mathbb{R}$ and any invariant measure $\pi ^{u} (\cdot )$, the following formula holds:
\begin{align}\label{e14}
\left | P_t^{u}\varphi(\theta_1 )-\int _{H}{\varphi(\theta_2)\pi ^{u} (d\theta_2  )}\right |\leqslant CL_{\varphi }\left ( 1+\left | \theta \right |_H+\left | j\right |_H\right )e^{-\frac{\lambda_{p}}{p}t},
\end{align}
where $L_{\varphi }=\mathop{\sup}\limits_{\theta_1,\theta_2\in H}\frac{\left | \varphi (\theta_1)-\varphi (\theta_2)\right |}{\left | \theta_1-\theta_2\right |_H}$.

To this end, by (\ref{e11}) and (\ref{e13}), we have
\begin{align*}
\left | P_t^{u}\varphi(\theta_1 )-\int _{H}{\varphi(\theta_2)\pi ^{u} (d\theta_2  )}\right |&=\left |\int _{H}{P_t^{u}\varphi(\theta_1 )-P_t^{u}\varphi(\theta_2 )\pi ^{u} (d\theta_2  )} \right |\\
&\leq \int _{H}{\mathbb{E}\left |\varphi(\tilde{\theta } _t^{u ,\theta_1 })-\varphi(\tilde{\theta } _t^{u,\theta_2 }) \right |\pi ^{u} (d\theta_2  )}\\
&\leq L_{\varphi }\int _{H}{\mathbb{E}\left |\tilde{\theta } _t^{u,\theta_1 }-\tilde{\theta } _t^{u,\theta_2 } \right |_{H}\pi ^{u} (d\theta_2  )}\\
&\leq L_{\varphi }e^{-\frac{\lambda_{p}}{p}t}\int _{H}{\left | \theta_1 -\theta_2\right |_H\pi ^{u} (d\theta_2  )}\\
&\leq CL_{\varphi }\left ( 1+\left | \theta \right |_H+\left | j\right |_H\right )e^{-\frac{\lambda_{p}}{p}t}.
\end{align*}
\end{proof}

%% -------------------------------------------------------------------
\section{The averaging principle of Eq.(\ref{e2})}\label{s5}
In this section, we introduce the averaged equation for Eq.(\ref{e2}) and prove the first main result of this paper. Namely, for any $T> 0$ and $\left ( j_0,\theta_0\right )\in H\times H$, the slow variables $j^{\varepsilon}$ converges to the solution $\bar{j}$ of the following averaged equations:
\begin{equation}\label{e15}
\left\{
\begin{aligned}%对齐
&\frac{\partial \bar{j}}{\partial t}+\left(\bar{u}\cdot \triangledown\right)\bar{j}=\triangle\bar{j}+\bar{f}(\bar{j},\bar{u})+\partial_x g(\bar{u})+\int_{\left|z\right|_H<1}\bar{\sigma}_1(\bar{j},\bar{u},z)\dot{\tilde{\eta} }_1(t,dz) ,\\
&\bar{j}(0)=j_0,
\end{aligned}
\right.
\end{equation}
where $\bar{j}:=\triangledown^{\perp }\cdot \bar{u}$, $g:H^1\rightarrow H$ defined by
\begin{align*}
g(u):=\int _{H}{\hat{\theta} \pi ^{u} d(\hat{\theta} )},
\end{align*}
and
\begin{align*}
\bar{f}(j,u):=\int _Hf(j,\hat{\theta})\pi^{u}(d\hat{\theta} ),\quad \bar{\sigma}_1(j,u,z):=\int_H\sigma(j,\hat{\theta},z)\pi^{u}(d\hat{\theta} ).
\end{align*}

To establish the well-posedness of (\ref{e15}), we require the non-Lipschitz continuity of $g$, $\bar{f}$ and $\bar{\sigma}_2$. To this end, we introduce the following lemma:
\begin{lemma}\label{l4}
	Let $a>0$ and $f(t)\geq 0$, if $F(t):=\int_0^tf(s)ds$ is bounded with respect to $t$, we have
	\begin{align*}
		\lim_{t\rightarrow +\infty}\int_0^te^{-a(t-s)}f(s)ds=0.
	\end{align*}
\end{lemma}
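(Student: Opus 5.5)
The plan is the standard splitting argument for convolutions against a decaying kernel. Since $f\ge 0$, the function $F$ is nondecreasing. By hypothesis it is bounded, so the limit $F(\infty):=\lim_{t\to\infty}F(t)=\int_0^\infty f(s)\,ds<\infty$ exists. Consequently the tail $\int_{T_0}^\infty f(s)\,ds$ tends to zero as $T_0\to\infty$; this monotonicity-plus-boundedness observation is the only place the hypotheses enter, and it is the key step.

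Fix $\epsilon>0$ and choose $T_0$ such that $\int_{T_0}^\infty f(s)\,ds<\epsilon$. For $t>T_0$, split the integral at $T_0$:
\begin{align*}
\int_0^te^{-a(t-s)}f(s)ds=\int_0^{T_0}e^{-a(t-s)}f(s)ds+\int_{T_0}^te^{-a(t-s)}f(s)ds.
\end{align*}
On the first piece we have $s\le T_0$, hence $e^{-a(t-s)}\le e^{-a(t-T_0)}$. This gives the bound $e^{-a(t-T_0)}F(T_0)\le e^{-a(t-T_0)}\sup_{r}F(r)$, which tends to $0$ as $t\to\infty$ because $a>0$. On the second piece we use $e^{-a(t-s)}\le 1$, so it is bounded by $\int_{T_0}^\infty f(s)\,ds<\epsilon$.

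Combining the two pieces, $\limsup_{t\to\infty}\int_0^te^{-a(t-s)}f(s)ds\le\epsilon$. Since $\epsilon$ is arbitrary and the integrand is nonnegative, the limit exists and equals $0$.

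An equivalent route, if one prefers to avoid choosing $T_0$ explicitly, is to integrate by parts: write $\int_0^te^{-a(t-s)}dF(s)=F(t)-a\int_0^te^{-a(t-s)}F(s)ds$. The right-hand side tends to $F(\infty)-F(\infty)=0$, because $a\int_0^te^{-a(t-s)}F(s)ds\to F(\infty)$ whenever $F(s)\to F(\infty)$. The splitting argument above is shorter, so I will present that one. I do not expect any real obstacle beyond noting that nonnegativity is what converts boundedness of $F$ into convergence of the tail.
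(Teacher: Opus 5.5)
Your argument is correct and is essentially the paper's splitting proof: both use that $F$ is nondecreasing and bounded, hence convergent, and both bound the older part of the integral by kernel decay and the recent part by the $f$-mass it carries. The only difference is where you cut. The paper splits at the moving point $t-\delta$, choosing $\delta$ so that $e^{-a\delta}\sup_{t}F(t)<\varepsilon/2$ and then using $F(t)-F(t-\delta)\to 0$; you split at a fixed $T_0$ chosen so that $\int_{T_0}^\infty f(s)\,ds<\epsilon$, and let the factor $e^{-a(t-T_0)}$ tend to zero.
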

The proof of this lemma is given in the Appendix.

Next, we derive the non-Lipschitz continuity of the coefficients in Eq.(\ref{e15}). We assert the following property:
\begin{proposition}\label{p1}
	Under Assumptions \ref{a1}-\ref{a4}, for any $u_1,u_2\in H^1$ and for  some $p\geq 1$, we have
	\begin{align*}
		\left|g(u_1)-g(u_2)\right|^{2p}_H\leq C_p\kappa\left(\left|j_1-j_2\right|^{2p}_H\right),
	\end{align*}
and
\begin{align*}
	\left|\bar{f}(j_1,u_1)-\bar{f}(j_2,u_2)\right|^{2p}_H\leq C_p\tilde{\kappa}\left(\left|j_1-j_2\right|^{2p}_H\right),
\end{align*}
where $\tilde{\kappa}(u)=\kappa(u+C\kappa(u))$ is also a concave continuous non-decreasing function. Furthermore, we have
\begin{align*}
	\int_{\left|z\right|_H<1}\left|\bar{\sigma}_1(j_1,u_1,z)-\bar{\sigma}_1(j_2,u_2,z)\right|^{2p}_H\nu_1(dz)\leq C_p\tilde{\kappa}\left(\left|j_1-j_2\right|^{2p}_H\right).
\end{align*}
\end{proposition}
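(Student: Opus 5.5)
The plan is to represent each averaged coefficient through the frozen dynamics (\ref{e9}) run from one fixed initial datum, say $\theta=0$, and then let $t\to\infty$. Concretely, by invariance of $\pi^{u}$ and the mixing estimate (\ref{e14}) (applied componentwise, or to Lipschitz test functions $\varphi(\theta)=(\theta,h)$ with $|h|_H\le 1$), we have
\begin{align*}
g(u)=\lim_{t\to\infty}\mathbb{E}\,\tilde{\theta}^{u,0}_t \quad\text{in } H,
\end{align*}
and similarly $\bar f(j,u)=\lim_{t\to\infty}\mathbb{E} f(j,\tilde{\theta}^{u,0}_t)$ and $\bar\sigma_1(j,u,z)=\lim_{t\to\infty}\mathbb{E}\sigma_1(j,\tilde{\theta}^{u,0}_t,z)$. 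The moment bound (\ref{e13}) together with the growth conditions in Assumptions \ref{a1}, \ref{a2} justifies these limits. For $\bar f$ and $\bar\sigma_1$ I would rather use a coupling: let $\theta_1,\theta_2$ be the initial conditions drawn from $\pi^{u_1},\pi^{u_2}$, and compare the two trajectories through (\ref{e11}).

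\textbf{Step 1 (estimate for $g$).} By Jensen,
\begin{align*}
|g(u_1)-g(u_2)|_H^{2p}\le \liminf_{t\to\infty}\mathbb{E}\bigl|\tilde{\theta}^{u_1,0}_t-\tilde{\theta}^{u_2,0}_t\bigr|_H^{2p},
\end{align*}
and I then invoke (\ref{e12}). The $\kappa$-term already has the desired form $C\kappa(|j_1-j_2|^{2p}_H)$. The first term is
\begin{align*}
|j_1-j_2|_H^2\int_0^t e^{-\lambda_{p,\gamma}(t-s)}F'(s)\,ds,\qquad F(t)=\int_0^t\mathbb{E}\,|\tilde\theta^{u_1}_s-\tilde\theta^{u_2}_s|_H^{2p-2}|\nabla\tilde\theta^{u_2}_s|_H^2\,ds .
\end{align*}
To show $F$ is bounded in $t$, I would use H\"older/Young to split it into $\int_0^t\mathbb{E}|\tilde\theta^{u_i}_s|^{2p-2}_H|\nabla\tilde\theta^{u_2}_s|_H^2\,ds$. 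The diagonal piece is controlled uniformly in $t$ by (\ref{e10}) with $\theta=0$, since the gradient integral on the left of (\ref{e10}) is bounded by $C_p(1+|j|^{2p}_H)$ independently of $t$. Lemma \ref{l4} then kills this term as $t\to\infty$, leaving only $C_p\kappa(|j_1-j_2|_H^{2p})$.

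\textbf{Step 2 (estimates for $\bar f$ and $\bar\sigma_1$).} Write
\begin{align*}
\bar f(j_1,u_1)-\bar f(j_2,u_2)=\lim_{t\to\infty}\mathbb{E}\bigl[f(j_1,\tilde\theta^{u_1,0}_t)-f(j_2,\tilde\theta^{u_2,0}_t)\bigr].
\end{align*}
Apply Jensen, then Assumption \ref{a1}, then Jensen for the concave $\kappa$:
\begin{align*}
|\cdot|^{2p}_H\le \kappa\Bigl(|j_1-j_2|^{2p}_H+\limsup_t\mathbb{E}|\tilde\theta^{u_1,0}_t-\tilde\theta^{u_2,0}_t|^{2p}_H\Bigr)\le \kappa\bigl(|j_1-j_2|^{2p}_H+C\kappa(|j_1-j_2|^{2p}_H)\bigr),
\end{align*}
where the last inequality uses Step 1. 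This is exactly $\tilde\kappa$, and the same computation under Assumption \ref{a2} (after Fubini in $z$) handles $\bar\sigma_1$. I would also check that $\tilde\kappa$ inherits concavity, monotonicity and continuity: it is a composition of concave non-decreasing maps, since $u\mapsto u+C\kappa(u)$ is concave non-decreasing. The Osgood condition for $\tilde\kappa$ is not claimed, so it is not needed here.

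\textbf{Main obstacle.} The delicate point is the uniform-in-time bound on $F(t)$ in Step 1. The weight $|\tilde\theta^{u_1}-\tilde\theta^{u_2}|^{2p-2}$ paired with $|\nabla\tilde\theta^{u_2}|^2$ is not directly a quantity in (\ref{e10}). In addition, $\int_0^t\mathbb{E}|\nabla\tilde\theta|^2\,ds$ only has a bound of order $1+t$ unless the stationary dissipation is exploited. The estimate (\ref{e10}) as stated gives a $t$-independent bound, and I would lean on it. If that fails in the mixed term, the fallback is to start both trajectories from stationarity and use the $t$-uniform bound on $\mathbb{E}|\tilde\theta|^{2p}$ from (\ref{e13}) to control $F'$, with Lemma \ref{l4} still applied as stated.
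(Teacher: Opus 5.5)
Your overall route is the paper's. You compare the two frozen processes started from a common datum through (\ref{e12}), and split the weight via $|\tilde{\theta}^{u_1,\theta}_s-\tilde{\theta}^{u_2,\theta}_s|^{2p-2}_H\le C_p(|\tilde{\theta}^{u_1,\theta}_s|^{2p-2}_H+|\tilde{\theta}^{u_2,\theta}_s|^{2p-2}_H)$. You then let Lemma~\ref{l4} kill the convolution, and obtain $\tilde{\kappa}$ from Assumptions~\ref{a1}, \ref{a2} and Jensen for the concave $\kappa$. The gap is the mixed piece
\begin{align*}
\int_0^t\mathbb{E}\left|\tilde{\theta}^{u_1,\theta}_s\right|^{2p-2}_H\left|\triangledown\tilde{\theta}^{u_2,\theta}_s\right|^2_Hds,
\end{align*}
which you never bound. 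The estimate (\ref{e10}) only controls $|\tilde{\theta}^{u,\theta}_s|^{2p-2}_H|\triangledown\tilde{\theta}^{u,\theta}_s|^2_H$ with the same $u$ in both factors. H\"older or Young cannot move the weight from $\tilde{\theta}^{u_1,\theta}$ onto $\tilde{\theta}^{u_2,\theta}$ without a higher moment of $|\triangledown\tilde{\theta}^{u_2,\theta}_s|_H$, which is not available. Inserting $|\tilde{\theta}^{u_1,\theta}_s|_H\le|\tilde{\theta}^{u_2,\theta}_s|_H+|\tilde{\theta}^{u_1,\theta}_s-\tilde{\theta}^{u_2,\theta}_s|_H$ is circular.

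This is the term the paper isolates as $K_2$, and it needs an idea absent from your plan: It\^{o}'s formula for the product $|\tilde{\theta}^{u_1,\theta}_t|^{2p-2}_H|\tilde{\theta}^{u_2,\theta}_t|^2_H$. The dissipation of the factor $|\tilde{\theta}^{u_2,\theta}_t|^2_H$ produces exactly $-2|\tilde{\theta}^{u_1,\theta}_t|^{2p-2}_H|\triangledown\tilde{\theta}^{u_2,\theta}_t|^2_H$. The remaining drift and jump terms are handled with (\ref{e3}), Assumption~\ref{a3} and (\ref{e10}). From the resulting differential inequality the paper bounds the mixed integral by $C_p(1+|\theta|^{2p}_H+|j_1|^{2p}_H+|j_2|^{2p}_H)$, then applies Lemma~\ref{l4} to get (\ref{e17}) and (\ref{e18}). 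Without such a step your Step~2 fails along with Step~1, because it uses exactly the bound on $\limsup_{t\rightarrow\infty}\mathbb{E}|\tilde{\theta}^{u_1,0}_t-\tilde{\theta}^{u_2,0}_t|^{2p}_H$.

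For the $g$-estimate alone this can be bypassed. Taking expectations in (\ref{e9}) removes the compensated integral, so $\mathbb{E}\tilde{\theta}^{u,\theta}_t$ solves the deterministic advection--diffusion equation and $|\mathbb{E}\tilde{\theta}^{u,\theta}_t|_H\le e^{-\lambda t}|\theta|_H$. By invariance and (\ref{e13}), $g\equiv 0$. But $\bar{f}$ and $\bar{\sigma}_1$ are nonlinear in $\hat{\theta}$ and genuinely need the difference estimate.

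Your fallback does not close the gap, for two reasons.
\begin{itemize}
\item The bound (\ref{e13}) concerns $H$-moments of $\pi^{u}$ and gives no control of $|\triangledown\tilde{\theta}^{u_2}_s|_H$.
\item Started from stationarity, the diagonal quantity $\mathbb{E}|\tilde{\theta}^{u_2}_s|^{2p-2}_H|\triangledown\tilde{\theta}^{u_2}_s|^2_H$ is constant in $s$. By the stationary balance for $\mathbb{E}|\tilde{\theta}^{u_2}_s|^{2p}_H$ it is strictly positive unless $\sigma_2(j_2,\cdot,\cdot)$ vanishes $\pi^{u_2}\otimes\nu_2$-a.e. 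Here the jump term $|x+y|^{2p}_H-|x|^{2p}_H-2p|x|^{2(p-1)}_H(x,y)$ is positive for $y\neq 0$ by strict convexity. So its time integral grows linearly, the hypothesis of Lemma~\ref{l4} fails, and the convolution tends to a positive constant rather than zero. The term carrying $|j_1-j_2|^2_H$ then survives as $t\rightarrow\infty$.
\end{itemize}

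Your side remark about linear growth is the right worry, and it is not specific to the stationary start. Integrating a differential inequality with a constant source, as in the proof of (\ref{e10}) and in the paper's product estimate, gives uniformly in $t$ only the $e^{-\lambda_{p,\gamma}(t-s)}$-weighted dissipation integral. For instance, with $p=1$ and $\sigma_2(j,\theta,z)=h(z)$, $\int_{|z|_H<1}|h(z)|^2_H\nu_2(dz)>0$, the unweighted $\int_0^t\mathbb{E}|\triangledown\tilde{\theta}_s|^2_Hds$ grows like $t$. So the $t$-uniform unweighted bounds fed into Lemma~\ref{l4} — by your diagonal piece and by the paper's treatment of $K_1$ and $K_2$ — are the fragile input here. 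Relying on (\ref{e10}) as stated is where your argument, like the paper's, is most exposed.
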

\begin{proof}
	First, review the first term on the right-hand side of the inequality in (\ref{e12}), we have
	\begin{align*}
		&\int_0^te^{-\lambda_{p,\gamma}(t-s)}\mathbb{E}\left|\tilde{\theta }^{u_1,\theta}_s-\tilde{\theta }^{u_2,\theta}_s\right|^{2p-2}_H\left|\triangledown\tilde{\theta }^{u_2,\theta}_s\right|^{2}_{H}ds \\
		\leq &C_p\left\{\int_0^te^{-\lambda_{p,\gamma}(t-s)}\mathbb{E}\left|\tilde{\theta }^{u_2,\theta}_s\right|^{2p-2}_H\left|\triangledown\tilde{\theta }^{u_2,\theta}_s\right|^{2}_{H}ds\right\}_{K_1}+C_p\left\{\int_0^te^{-\lambda_{p,\gamma}(t-s)}\mathbb{E}\left|\tilde{\theta }^{u_1,\theta}_s\right|^{2p-2}_H\left|\triangledown\tilde{\theta }^{u_2,\theta}_s\right|^{2}_{H}ds\right\}_{K_2}.
	\end{align*}
For $K_1$, according to (\ref{e10}) and Lemma \ref{l4}, we have
\begin{align}\label{e16}
	\lim_{t\rightarrow+\infty}\int_0^te^{-\lambda_{p,\gamma}(t-s)}\mathbb{E}\left|\tilde{\theta }^{u_2,\theta}_s\right|^{2p-2}_H\left|\triangledown\tilde{\theta }^{u_2,\theta}_s\right|^{2}_{H}ds=0.
\end{align}
For $K_2$, by It\^{o}'s formula, we have
\begin{align*}
	\frac{d}{dt}\left|\tilde{\theta }^{u_2,\theta}_t\right|^2_H=&2\left \langle \triangle \tilde{\theta }^{u_2,\theta}_t,\tilde{\theta }^{u_2,\theta}_t \right \rangle +2\int_{\left|z\right|_H<1}\left(\tilde{\theta }^{u_2,\theta}_t,\sigma_2(j_2,\tilde{\theta }^{u_2,\theta}_t,z)\right)\dot{\tilde{\eta} }_2(t,dz)\\
	+&\int_{\left|z\right|_H<1}\left|\sigma_2(j_2,\tilde{\theta }^{u_2,\theta}_t,z)\right|^2_H\dot{\eta}_2(t,dz),
\end{align*}
and so
\begin{align*}
	\frac{d}{dt}\left|\tilde{\theta }^{u_2,\theta}_t\right|^2_H=&-2\left|\triangledown \tilde{\theta }^{u_2,\theta}_t\right|^2_H+2\int_{\left|z\right|_H<1}\left(\tilde{\theta }^{u_2,\theta}_t,\sigma_2(j_2,\tilde{\theta }^{u_2,\theta}_t,z)\right)\dot{\tilde{\eta} }_2(t,dz)\\
	+&\int_{\left|z\right|_H<1}\left|\sigma_2(j_2,\tilde{\theta }^{u_2,\theta}_t,z)\right|^2_H\dot{\eta}_2(t,dz).
\end{align*}
Multiply both sides of the above equation by $\left|\tilde{\theta }^{u_1,\theta}_t\right|^{2p-2}_H$, we have
\begin{align*}
	&\left|\tilde{\theta }^{u_1,\theta}_t\right|^{2p-2}_H\frac{d}{dt}\left|\tilde{\theta }^{u_2,\theta}_t\right|^2_H\\
	=&-2\left|\tilde{\theta }^{u_1,\theta}_t\right|^{2p-2}_H\left|\triangledown \tilde{\theta }^{u_2,\theta}_t\right|^2_H+2\int_{\left|z\right|_H<1}\left|\tilde{\theta }^{u_1,\theta}_t\right|^{2p-2}_H\left(\tilde{\theta }^{u_2,\theta}_t,\sigma_2(j_2,\tilde{\theta }^{u_2,\theta}_t,z)\right)\dot{\tilde{\eta} }_2(t,dz)\\
	+&\int_{\left|z\right|_H<1}\left|\tilde{\theta }^{u_1,\theta}_t\right|^{2p-2}_H\left|\sigma_2(j_2,\tilde{\theta }^{u_2,\theta}_t,z)\right|^2_H\dot{\eta}_2(t,dz),
\end{align*}
and so
\begin{align*}
	&\frac{d}{dt}\left|\tilde{\theta }^{u_1,\theta}_t\right|^{2p-2}_H\left|\tilde{\theta }^{u_2,\theta}_t\right|^2_H\\
	=&-2\left|\tilde{\theta }^{u_1,\theta}_t\right|^{2p-2}_H\left|\triangledown \tilde{\theta }^{u_2,\theta}_t\right|^2_H+2\int_{\left|z\right|_H<1}\left|\tilde{\theta }^{u_1,\theta}_t\right|^{2p-2}_H\left(\tilde{\theta }^{u_2,\theta}_t,\sigma_2(j_2,\tilde{\theta }^{u_2,\theta}_t,z)\right)\dot{\tilde{\eta} }_2(t,dz)\\
	&+\int_{\left|z\right|_H<1}\left|\tilde{\theta }^{u_1,\theta}_t\right|^{2p-2}_H\left|\sigma_2(j_2,\tilde{\theta }^{u_2,\theta}_t,z)\right|^2_H\dot{\eta}_2(t,dz)+\left|\tilde{\theta }^{u_2,\theta}_t\right|^2_H\frac{d}{dt}\left|\tilde{\theta }^{u_1,\theta}_t\right|^{2p-2}_H.
\end{align*}
Note that
\begin{align*}
	&\frac{d}{dt}\left|\tilde{\theta }^{u_1,\theta}_t\right|^{2p-2}_H\\
	=&2(p-1)\left|\tilde{\theta }^{u_1,\theta}_t\right|^{2p-4}_H\left \langle \triangle \tilde{\theta }^{u_1,\theta}_t,\tilde{\theta }^{u_1,\theta}_t \right \rangle+2(p-1)\int_{\left|z\right|_H<1}\left|\tilde{\theta }^{u_1,\theta}_t\right|^{2p-4}_H\left(\tilde{\theta }^{u_1,\theta}_t,\sigma_2(j_1,\tilde{\theta }^{u_1,\theta}_t,z)\right)\dot{\tilde{\eta} }_2(t,dz)\\
	&+\int_{\left|z\right|_H<1}\left [\left|\tilde{\theta }^{u_1,\theta}_t+\sigma_2(j_1,\tilde{\theta }^{u_1,\theta}_t,z)\right|^{2p-2}_H-\left|\tilde{\theta }^{u_1,\theta}_t\right|^{2p-2}_H\right.\\
	&\left.-2(p-1)\left|\tilde{\theta }^{u_1,\theta}_t\right|^{2(p-2)}_H\left(\tilde{\theta }^{u_1,\theta}_t,\sigma_2(j_1,\tilde{\theta }^{u_1,\theta}_t,z)\right)  \right ] \dot{\eta}_2(t,dz).
\end{align*}
Therefore
\begin{align*}
	&\frac{d}{dt}\left|\tilde{\theta }^{u_1,\theta}_t\right|^{2p-2}_H\left|\tilde{\theta }^{u_2,\theta}_t\right|^2_H\\
	=&-2\left|\tilde{\theta }^{u_1,\theta}_t\right|^{2p-2}_H\left|\triangledown \tilde{\theta }^{u_2,\theta}_t\right|^2_H+2\int_{\left|z\right|_H<1}\left|\tilde{\theta }^{u_1,\theta}_t\right|^{2p-2}_H\left(\tilde{\theta }^{u_2,\theta}_t,\sigma_2(j_2,\tilde{\theta }^{u_2,\theta}_t,z)\right)\dot{\tilde{\eta} }_2(t,dz)\\
	&+\int_{\left|z\right|_H<1}\left|\tilde{\theta }^{u_1,\theta}_t\right|^{2p-2}_H\left|\sigma_2(j_2,\tilde{\theta }^{u_2,\theta}_t,z)\right|^2_H\dot{\eta}_2(t,dz)\\
	&-2(p-1)\left|\tilde{\theta }^{u_2,\theta}_t\right|^2_H\left|\tilde{\theta }^{u_1,\theta}_t\right|^{2p-4}_H\left|\triangledown \tilde{\theta }^{u_1,\theta}_t\right|^2_H\\
	&+2(p-1)\int_{\left|z\right|_H<1}\left|\tilde{\theta }^{u_2,\theta}_t\right|^2_H\left|\tilde{\theta }^{u_1,\theta}_t\right|^{2p-4}_H\left(\tilde{\theta }^{u_1,\theta}_t,\sigma_2(j_1,\tilde{\theta }^{u_1,\theta}_t,z)\right)\dot{\tilde{\eta} }_2(t,dz)\\
	&+\int_{\left|z\right|_H<1}\left|\tilde{\theta }^{u_2,\theta}_t\right|^2_H\left [\left|\tilde{\theta }^{u_1,\theta}_t+\sigma_2(j_1,\tilde{\theta }^{u_1,\theta}_t,z)\right|^{2p-2}_H-\left|\tilde{\theta }^{u_1,\theta}_t\right|^{2p-2}_H\right.\\
	&\left.-2(p-1)\left|\tilde{\theta }^{u_1,\theta}_t\right|^{2(p-2)}_H\left(\tilde{\theta }^{u_1,\theta}_t,\sigma_2(j_1,\tilde{\theta }^{u_1,\theta}_t,z)\right)  \right ] \dot{\eta}_2(t,dz).
\end{align*}
Hence
\begin{align*}
	&\mathbb{E}\left|\tilde{\theta }^{u_1,\theta}_t\right|^{2p-2}_H\left|\tilde{\theta }^{u_2,\theta}_t\right|^2_H\\
	=&\left|\theta\right|^{2p}_H-2\mathbb{E}\int_0^t\left|\tilde{\theta }^{u_1,\theta}_s\right|^{2p-2}_H\left|\triangledown \tilde{\theta }^{u_2,\theta}_s\right|^2_Hds-2(p-1)\mathbb{E}\int_0^t\left|\tilde{\theta }^{u_2,\theta}_s\right|^2_H\left|\tilde{\theta }^{u_1,\theta}_s\right|^{2p-4}_H\left|\triangledown \tilde{\theta }^{u_1,\theta}_s\right|^2_Hds\\
	&+\mathbb{E}\int_0^t\int_{\left|z\right|_H<1}\left|\tilde{\theta }^{u_1,\theta}_s\right|^{2p-2}_H\left|\sigma_2(j_2,\tilde{\theta }^{u_2,\theta}_s,z)\right|^2_H\nu_2(dz)ds\\
	&+\mathbb{E}\int_0^t\int_{\left|z\right|_H<1}\left|\tilde{\theta }^{u_2,\theta}_s\right|^2_H\left [\left|\tilde{\theta }^{u_1,\theta}_s+\sigma_2(j_1,\tilde{\theta }^{u_1,\theta}_s,z)\right|^{2p-2}_H-\left|\tilde{\theta }^{u_1,\theta}_s\right|^{2p-2}_H\right.\\
	&\left.-2(p-1)\left|\tilde{\theta }^{u_1,\theta}_s\right|^{2(p-2)}_H\left(\tilde{\theta }^{u_1,\theta}_s,\sigma_2(j_1,\tilde{\theta }^{u_1,\theta}_s,z)\right)  \right ] \nu_2(dz)ds.
\end{align*}
According to Assumption \ref{a3}, (\ref{e3}) and (\ref{e10}), it is easy to check that 
\begin{align*}
	&\frac{d}{dt}\mathbb{E}\left|\tilde{\theta }^{u_1,\theta}_t\right|^{2p-2}_H\left|\tilde{\theta }^{u_2,\theta}_t\right|^2_H+\mathbb{E}\left|\tilde{\theta }^{u_1,\theta}_t\right|^{2p-2}_H\left|\triangledown \tilde{\theta }^{u_2,\theta}_t\right|^2_H\\
	\leq& -(2p-1)\lambda\mathbb{E}\left|\tilde{\theta }^{u_1,\theta}_t\right|^{2p-2}_H\left|\tilde{\theta }^{u_2,\theta}_t\right|^2_H+C_p\left(1+\left|\theta\right|^{2p}_H+\left|j_1\right|^{2p}_H+\left|j_2\right|^{2p}_H\right).
\end{align*}
Gronwall's inequality implies that
\begin{align*}
	\mathbb{E}\left|\tilde{\theta }^{u_1,\theta}_t\right|^{2p-2}_H\left|\tilde{\theta }^{u_2,\theta}_t\right|^2_H+\mathbb{E}\int_0^t\left|\tilde{\theta }^{u_1,\theta}_s\right|^{2p-2}_H\left|\triangledown \tilde{\theta }^{u_2,\theta}_s\right|^2_Hds\leq C_p\left(1+\left|\theta\right|^{2p}_H+\left|j_1\right|^{2p}_H+\left|j_2\right|^{2p}_H\right).
\end{align*}
By Lemma \ref{l4}, we obtain that
\begin{align}\label{e17}
	\lim_{t\rightarrow+\infty}\int_0^te^{-\lambda_{p,\gamma}(t-s)}\mathbb{E}\left|\tilde{\theta }^{u_1,\theta}_s\right|^{2p-2}_H\left|\triangledown\tilde{\theta }^{u_2,\theta}_s\right|^{2}_{H}ds=0.
\end{align}
Combining (\ref{e16}) with (\ref{e17}), we obtain that
\begin{align*}
	\lim_{t\rightarrow+\infty}\int_0^te^{-\lambda_{p,\gamma}(t-s)}\mathbb{E}\left|\tilde{\theta }^{u_1,\theta}_s-\tilde{\theta }^{u_2,\theta}_s\right|^{2p-2}_H\left|\triangledown\tilde{\theta }^{u_2,\theta}_s\right|^{2}_{H}ds=0.
\end{align*}
Thus, we have demonstrated that
\begin{align}\label{e18}
	\lim_{t\rightarrow+\infty}\mathbb{E}\left|\tilde{\theta }^{u_1,\theta}_t-\tilde{\theta }^{u_2,\theta}_t\right|^{2p}_H\leq C\kappa\left(\left|j_1-j_2\right|^{2p}_H\right).
\end{align}

Next, by (\ref{e14}), for any $u_1,u_2\in H^1$, we have
\begin{align*}
	&\left|g(u_1)-g(u_2)\right|^{2p}_H\\
	=&\left|\int_H\hat{\theta}\pi^{u_1}(d\hat{\theta})-\int_H\hat{\theta}\pi^{u_2}(d\hat{\theta})\right|^{2p}_H\\
	\leq & C_p\left|\int_H\hat{\theta}\pi^{u_1}(d\hat{\theta})-P^{u_1}_t\theta\right|^{2p}_H+C_p\left|P^{u_1}_t\theta-P^{u_2}_t\theta\right|^{2p}_H+C_p\left|P^{u_2}_t\theta-\int_H\hat{\theta}\pi^{u_2}(d\hat{\theta})\right|^{2p}_H\\
	\leq & C_p\left|\int_H\left(P^{u_1}_t\hat{\theta}-P^{u_1}_t\theta\right)\pi^{u_1}(d\hat{\theta})\right|^{2p}_H+C_p\mathbb{E}\left|\tilde{\theta}^{u_1,\theta}_t-\tilde{\theta}^{u_2,\theta}_t\right|^{2p}_H\\
	&+C_p\left|\int_H\left(P^{u_2}_t\hat{\theta}-P^{u_2}_t\theta\right)\pi^{u_1}(d\hat{\theta})\right|^{2p}_H\\
	\leq &C_p\left(1+\left|\theta\right|^{2p}_H+\left|j_1\right|^{2p}_H+\left|j_2\right|^{2p}_H\right)e^{-\lambda_p t}+C_p\mathbb{E}\left|\tilde{\theta}^{u_1,\theta}_t-\tilde{\theta}^{u_2,\theta}_t\right|^{2p}_H.
\end{align*}
Using (\ref{e18}), and let $t\rightarrow +\infty $, we get
\begin{align*}
	\left|g(u_1)-g(u_2)\right|^{2p}_H\leq C_p\kappa\left(\left|j_1-j_2\right|^{2p}_H\right).
\end{align*}

Subsequently, by Assumption \ref{a1}, we have
\begin{align*}
	&\left|\bar{f}(j_1,u_1)-\bar{f}(j_2,u_2)\right|^{2p}_H\\
	=&\left|\int_H f(j_1,\hat{\theta})\pi^{u_1}d(\hat{\theta})-\int_H f(j_2,\hat{\theta})\pi^{u_2}(d\hat{\theta})\right|^{2p}_H\\
	\leq & C_p\left|\int_H f(j_1,\hat{\theta})\pi^{u_1}d(\hat{\theta})-P^{u_1}_tf(j_1,\theta)\right|^{2p}_H+C_p\left|P^{u_1}_tf(j_1,\theta)-P^{u_2}_tf(j_2,\theta)\right|^{2p}_H\\
	&+C_p\left|\int_H f(j_2,\hat{\theta})\pi^{u_2}d(\hat{\theta})-P^{u_2}_tf(j_2,\theta)\right|^{2p}_H\\
	\leq & C_p\left|\int_H \left(P^{u_1}_tf(j_1,\hat{\theta})-P^{u_1}_tf(j_1,\theta)\right)\pi^{u_1}d(\hat{\theta})\right|^{2p}_H+C_p\mathbb{E}\left|f(j_1,\tilde{\theta}^{u_1,\theta})-f(j_2,\tilde{\theta}^{u_2,\theta})\right|^{2p}_H\\
	&+C_p\left|\int_H \left(P^{u_2}_tf(j_2,\hat{\theta})-P^{u_2}_tf(j_2,\theta)\right)\pi^{u_2}d(\hat{\theta})\right|^{2p}_H\\
	\leq & C_p\int_H \mathbb{E}\left|f(j_1,\tilde{\theta}^{u_1,\hat{\theta}}_t)-f(j_1,\tilde{\theta}^{u_1,\theta}_t)\right|^{2p}_H\pi^{u_1}(d\hat{\theta})+C_p\int_H \mathbb{E}\left|f(j_2,\tilde{\theta}^{u_2,\hat{\theta}}_t)-f(j_2,\tilde{\theta}^{u_2,\theta}_t)\right|^{2p}_H\pi^{u_2}(d\hat{\theta})\\
	&+C_p\kappa\left(\left|j_1-j_2\right|^{2p}_H+\mathbb{E}\left|\tilde{\theta}^{u_1,\theta}-\tilde{\theta}^{u_2,\theta}\right|^{2p}_H\right)\\
	\leq &C_p\kappa\left(\int_H\mathbb{E}\left|\tilde{\theta}^{u_1,\hat{\theta}}_t-\tilde{\theta}^{u_1,\theta}_t\right|^{2p}_H\pi^{u_1}(d\hat{\theta})\right)+C_p\kappa\left(\int_H\mathbb{E}\left|\tilde{\theta}^{u_2,\hat{\theta}}_t-\tilde{\theta}^{u_2,\theta}_t\right|^{2p}_H\pi^{u_2}(d\hat{\theta})\right)\\
	&+C_p\kappa\left(\left|j_1-j_2\right|^{2p}_H+\mathbb{E}\left|\tilde{\theta}^{u_1,\theta}-\tilde{\theta}^{u_2,\theta}\right|^{2p}_H\right)\\
	\leq &C_p\kappa\left(C\left(1+\left|\theta\right|^{2p}_H+\left|j_1\right|^{2p}_H+\left|j_2\right|^{2p}_H\right)e^{-\lambda_p t}\right)+C_p\kappa\left(\left|j_1-j_2\right|^{2p}_H+\mathbb{E}\left|\tilde{\theta}^{u_1,\theta}-\tilde{\theta}^{u_2,\theta}\right|^{2p}_H\right).
\end{align*}
Let $t\rightarrow +\infty $, we get
\begin{align*}
	\left|\bar{f}(j_1,u_1)-\bar{f}(j_2,u_2)\right|^{2p}_H\leq C_p\tilde{\kappa}\left(\left|j_1-j_2\right|^{2p}_H\right),
\end{align*}
where $\tilde{\kappa}(u)=\kappa(u+C\kappa(u))$.

At last, according to Assumption \ref{a2}, we have
\begin{align*}
	&\int_{\left|z\right|_H<1}\left|\bar{\sigma}_1(j_1,u_1,z)-\bar{\sigma}_1(j_2,u_2,z)\right|^{2p}_H\nu_1(dz)\\
	=& \int_{\left|z\right|_H<1}\left|\int_H\sigma_1(j_1,\hat{\theta},z)\pi^{u_1}(d\hat{\theta})-\int_H\sigma_1(j_2,\hat{\theta},z)\pi^{u_2}(d\hat{\theta})\right|^{2p}_H\nu_1(dz)\\
	\leq & C_p\int_{\left|z\right|_H<1}\left|\int_H\sigma_1(j_1,\hat{\theta},z)\pi^{u_1}(d\hat{\theta})-P^{u_1}_t\sigma(j_1,\theta,z)\right|^{2p}_H\nu_1(dz)\\
	&+C_p\int_{\left|z\right|_H<1}\left|P^{u_1}_t\sigma_1(j_1,\theta,z)-P^{u_2}_t\sigma_1(j_2,\theta,z)\right|^{2p}_H\nu_1(dz)\\
	&+C_p\int_{\left|z\right|_H<1}\left|\int_H\sigma_1(j_2,\hat{\theta},z)\pi^{u_2}(d\hat{\theta})-P^{u_2}_t\sigma(j_2,\theta,z)\right|^{2p}_H\nu_1(dz)\\
	\leq &C_p\kappa\left(\int_H\mathbb{E}\left|\tilde{\theta}^{u_1,\hat{\theta}}_t-\tilde{\theta}^{u_1,\theta}_t\right|^{2p}_H\pi^{u_1}(d\hat{\theta})\right)+C\kappa\left(\int_H\mathbb{E}\left|\tilde{\theta}^{u_2,\hat{\theta}}_t-\tilde{\theta}^{u_2,\theta}_t\right|^{2p}_H\pi^{u_2}(d\hat{\theta})\right)\\
	&+C\kappa\left(\left|j_1-j_2\right|^{2p}_H+\mathbb{E}\left|\tilde{\theta}^{u_1,\theta}-\tilde{\theta}^{u_2,\theta}\right|^{2p}_H\right)\\
	\leq &C_p\kappa\left(C\left(1+\left|\theta\right|^{2p}_H+\left|j_1\right|^{2p}_H+\left|j_2\right|^{2p}_H\right)e^{-\lambda_p t}\right)+C_p\kappa\left(\left|j_1-j_2\right|^{2p}_H+\mathbb{E}\left|\tilde{\theta}^{u_1,\theta}-\tilde{\theta}^{u_2,\theta}\right|^{2p}_H\right).
\end{align*}
Let $t\rightarrow +\infty $, we get
\begin{align*}
	\int_{\left|z\right|_H<1}\left|\bar{\sigma}_1(j_1,u_1,z)-\bar{\sigma}_1(j_2,u_2,z)\right|^{2p}_H\nu_1(dz)\leq C_p\tilde{\kappa}\left(\left|j_1-j_2\right|^{2p}_H\right).
\end{align*}

\end{proof}

Similar to Theorems \ref{th1} and \ref{th2}, we declare the following Theorem directly.
\begin{theorem}\label{th5}
For any $j_0\in H$, Eq.(\ref{e15}) is well-posed. Furthermore, for any $T>0$ and some $p\geq 1$, we have
\begin{align}\label{e19}
\mathbb{E}\sup_{t\in[0,T]}\left|\bar{j}\right|^{2p}_H+\int_0^T\mathbb{E}\left|\bar{j}\right|^{2p-2}_H\left|\triangledown\bar{j}\right|^2_Hds\leq C_T\left(1+\left|j_0\right|^{2p}_H\right).
\end{align}
\end{theorem}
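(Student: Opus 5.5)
The plan is to treat (\ref{e15}) as a 2D vorticity equation whose coefficients are non-Lipschitz in the sense of Proposition \ref{p1}, and to repeat the scheme of Theorem \ref{th1} and the Appendix.

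\textbf{Step 1: growth bounds.} First I derive linear growth bounds for the averaged coefficients. By Assumption \ref{a1} and (\ref{e13}),
\begin{align*}
\left|\bar f(j,u)\right|^{2p}_H\leq \int_H\left|f(j,\hat\theta)\right|^{2p}_H\pi^u(d\hat\theta)\leq C\left(1+\left|j\right|^{2p}_H\right),
\end{align*}
using Jensen's inequality. The same argument with Assumption \ref{a2} gives the corresponding bound for $\int_{|z|_H<1}|\bar\sigma_1(j,u,z)|^{2p}_H\nu_1(dz)$. Finally, $|g(u)|^{2p}_H\leq C(1+|j|^{2p}_H)$.

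\textbf{Step 2: existence and uniqueness.} For existence I use a Galerkin approximation, or the same truncation/Picard scheme used in the Appendix for (\ref{e2}), with the frozen-temperature coupling replaced by the explicit maps $\bar f$, $\bar\sigma_1$, $g$. The a priori bounds of Step 3 are uniform in the approximation level and give tightness, so a limit exists.

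For pathwise uniqueness, take two solutions $\bar j^1,\bar j^2$ and write $w=\bar j^1-\bar j^2$. Apply It\^o's formula to $|w|^{2p}_H$, localized by $\tau_M$. The convective difference is
\begin{align*}
\left((\bar u^1\cdot\triangledown)\bar j^1-(\bar u^2\cdot\triangledown)\bar j^2,w\right)=\left(((\bar u^1-\bar u^2)\cdot\triangledown)\bar j^2,w\right).
\end{align*}
I bound it by (\ref{e8.5}) and Ladyzhenskaya's inequality by $|w|_H^{1/2}|\triangledown w|_H^{1/2}|\triangledown\bar j^2|_H\cdot|w|_H$ (up to the Biot--Savart norm), and absorb it into the dissipation, leaving a factor $|\triangledown\bar j^2|^2_H|w|^{2p}_H$. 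This factor is integrable in time by (\ref{e19}). The term $\partial_x(g(\bar u^1)-g(\bar u^2))$ is handled by integrating by parts onto $\triangledown w$ and then using Young's inequality together with Proposition \ref{p1}. The drift and jump differences are controlled by $\tilde\kappa(|w|^{2p}_H)$ through Proposition \ref{p1}, Jensen's inequality (concavity of $\tilde\kappa$) and B-D-G.

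This leads to
\begin{align*}
\mathbb{E}\left[e^{-C\int_0^{t}\left|\triangledown\bar j^2\right|^2_Hds}\left|w_{t\wedge\tau_M}\right|^{2p}_H\right]\leq C\int_0^t\tilde\kappa\left(\mathbb{E}\left[\cdots\right]\right)ds.
\end{align*}
Bihari's inequality then gives $w=0$, provided $\int_{0^+}du/\tilde\kappa(u)=\infty$.

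\textbf{Step 3: the estimate (\ref{e19}).} I argue exactly as in the $j^\varepsilon$ part of Theorem \ref{th1}. Apply It\^o's formula to $|\bar j|^{2p}_H$; the convective term vanishes. The $\partial_xg$ term is handled by moving the derivative onto $\bar j$ and applying Young's inequality with $|g(\bar u)|^{2p}_H\leq C(1+|\bar j|^{2p}_H)$. The jump term is treated with (\ref{e3}) and B-D-G exactly as before. Gronwall's inequality then closes the estimate. This is simpler than Theorem \ref{th1}, since there is no coupling with $\theta$, so no $\varepsilon$-convolution term appears.

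\textbf{Main obstacle.} The delicate step is uniqueness. Two points need care there: first, checking that $\tilde\kappa(u)=\kappa(u+C\kappa(u))$ still satisfies the Osgood condition; second, combining the exponential weight from the convective term with the concave $\tilde\kappa$ under expectation. For the first, I would try to show $\tilde\kappa(u)\leq C'\kappa(u)$ for small $u$, using $\kappa(u)\geq cu$ near $0$ (by concavity) and $\kappa(u)\leq C_1u+C_2$. If that fails, I would instead localize by $\tau_M$, so that the weight is bounded below, and apply Bihari's inequality directly to $\mathbb{E}|w_{t\wedge\tau_M}|^{2p}_H$.
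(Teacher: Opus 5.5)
Your Steps~1 and~3 are fine, and they are what the paper means by ``similar to Theorems~\ref{th1} and~\ref{th2}''. Jensen's inequality with (\ref{e13}) gives linear growth of $\bar f$, $\bar\sigma_1$ and $g$. It\^o's formula for $|\bar j|^{2p}_H$, together with (\ref{e3}), B-D-G and Gronwall, then gives (\ref{e19}). The exponential weight for the convective cross term is also sound. Your second worry is harmless: concavity and $\tilde\kappa(0)=0$ give $\lambda\tilde\kappa(x)\le\tilde\kappa(\lambda x)$ for $\lambda\in[0,1]$, so the weight (a number in $(0,1]$) can be moved inside $\tilde\kappa$ before applying Jensen.

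The genuine gap is the first point of your ``main obstacle''. The uniqueness step rests on it, and so does strong existence via Gy\"ongy--Krylov. Bihari's inequality gives $w\equiv 0$ only if $\hat\kappa(u)=u+\kappa(u)+\tilde\kappa(u)$ satisfies $\int_{0^+}du/\hat\kappa(u)=\infty$. This does not follow from the Osgood condition on $\kappa$.

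Your proposed check, $\tilde\kappa\le C'\kappa$ near $0$, cannot work unless $\kappa$ is Lipschitz at $0$.
\begin{itemize}
\item From $\kappa(u)\ge cu$ you only get $u+C\kappa(u)\le C'\kappa(u)$. Hence $\tilde\kappa\le C''\,\kappa\circ\kappa$ near $0$, and in fact also $\tilde\kappa\ge c'\,\kappa\circ\kappa$.
\item But $\kappa(\kappa(u))/\kappa(u)=\kappa(v)/v$ with $v=\kappa(u)\to 0$, which is unbounded as soon as $\kappa(v)/v\to\infty$.
\item Concretely, $\kappa(u)=u\log(1/u)$ near $0$ is an admissible modulus in Assumptions~\ref{a1}--\ref{a3}. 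Yet $\tilde\kappa(u)\sim Cu\log^2(1/u)$ and $\int_{0^+}\frac{du}{u\log^2(1/u)}<\infty$, so Bihari gives nothing.
\end{itemize}

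Your fallback of localizing with $\tau_M$ only addresses the weight, not this. Pairing the drift difference directly with $w$ instead of using Young would not rescue it either. The jump part of It\^o's formula produces $\int_{|z|_H<1}|\bar\sigma_1(\bar j^1,\bar u^1,z)-\bar\sigma_1(\bar j^2,\bar u^2,z)|^{2p}_H\nu_1(dz)$ with no such pairing, and Proposition~\ref{p1} only bounds it by $C_p\tilde\kappa(|w|^{2p}_H)$.

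The loss comes from composing two non-Lipschitz dependences: that of $\pi^u$ on $j$ through $\sigma_2$, and that of $f,\sigma_1$ on $\theta$. So an extra hypothesis is needed, for instance one of the following:
\begin{itemize}
\item assume $\int_{0^+}du/\tilde\kappa(u)=\infty$ outright;
\item assume $f$ and $\sigma_1$ are Lipschitz in $\theta$;
\item assume $\sigma_2$ is Lipschitz in $j$.
\end{itemize}
In the last two cases the moduli in Proposition~\ref{p1} become $\le C(u+\kappa(u))$, which is Osgood, and your argument closes. The paper does not resolve this either. It gives no proof beyond the analogy, and Proposition~\ref{p1} only asserts that $\tilde\kappa$ is concave, never that it satisfies the Osgood condition.
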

\subsection{Weak convergence of the averaging principle}
In this subsection, we prove the following conclusion, which is the first main result of this article.
\begin{theorem}\label{th6}
	Suppose that Assumptions \ref{a1}-\ref{a4} hold. For any $T>0$, $\eta>0$ and $(j_0,\theta_0)\in H^1\times H$, we have
\begin{align}\label{e21}
\lim _{\varepsilon \rightarrow 0}\mathbb{P}\left ( \mathop{\sup}\limits_{t\in[0,T]}\left|j^{\varepsilon}-\bar{j}\right|_H>\eta \right )=0.
\end{align}
\end{theorem}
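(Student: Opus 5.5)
We will use the classical Khasminskii time-discretization combined with stopping-time localization. The convergence is in probability rather than in mean because the regularity bound (\ref{e7}) and the increment bound (\ref{e8}) only hold up to $\tau_M$. Fix $M>0$. By (\ref{e4}), (\ref{e19}) and Chebyshev's inequality,
\begin{align*}
\mathbb{P}(\tau_M\le T)+\mathbb{P}(\bar\tau_M\le T)\le C_T(1+|j_0|^{2}_H+|\theta_0|^2_H)/M^2,
\end{align*}
where $\bar\tau_M$ is the analogous stopping time for $\bar j$. Hence it suffices to show that, for each fixed $M$,
\begin{align*}
\lim_{\varepsilon\to0}\mathbb{E}\sup_{t\in[0,T\wedge\tau_M\wedge\bar\tau_M]}|j^\varepsilon_t-\bar j_t|^{2}_H=0 .
\end{align*}
We then let $\varepsilon\to0$ first and $M\to\infty$ afterwards.

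\emph{Step 1 (auxiliary processes).} Partition $[0,T]$ into intervals of length $\delta=\delta(\varepsilon)$. On each interval $[k\delta,(k+1)\delta)$ define the auxiliary fast process $\hat\theta^\varepsilon$ by freezing the slow variable at $j^\varepsilon_{k\delta}$, that is, $u=u^\varepsilon_{k\delta}$ in the convective term and $j=j^\varepsilon_{k\delta}$ in $\sigma_2$. Define the auxiliary slow process $\hat j^\varepsilon$ by replacing $\theta^\varepsilon$ with $\hat\theta^\varepsilon$ in the first equation of (\ref{e2}). The $\theta$-difference estimate in Lemma \ref{l3} has to be redone at the fast scale. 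Using the dissipativity $\lambda_p>0$, the bound (\ref{e8.5}) for $((u^\varepsilon_s-u^\varepsilon_{k\delta})\cdot\nabla)\theta^\varepsilon$, the $\kappa$-condition of Assumption \ref{a3}, and (\ref{e8.5.5}) together with Jensen's inequality for the concave $\kappa$, we aim for
\begin{align*}
\mathbb{E}\int_0^{T\wedge\tau_M}|\theta^\varepsilon_s-\hat\theta^\varepsilon_s|^{2p}_H\,ds\le C_{T,M}\Big(\tfrac{\delta^{1/2}}{\varepsilon}\cdot(\text{gradient term})+\kappa(C\delta^{1/2})\Big).
\end{align*}
The gradient term must be handled with care. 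The convective contribution carries a factor $1/\varepsilon$ but is integrated against a decay of rate $\lambda_{p,\gamma}/\varepsilon$. We therefore expect only a bound of the form $C_M\delta^{1/2}$ times $\mathbb{E}\int|\nabla\theta^\varepsilon|^2$, the latter being controlled by (\ref{e5}) after absorbing $\varepsilon$.

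Next compare $j^\varepsilon$ with $\hat j^\varepsilon$ via It\^o's formula for $|\cdot|^2_H$. The differences are $f(j^\varepsilon,\theta^\varepsilon)-f(j^\varepsilon,\hat\theta^\varepsilon)$, $\partial_x(\theta^\varepsilon-\hat\theta^\varepsilon)$ (moved onto $\nabla(j^\varepsilon-\hat j^\varepsilon)$ and absorbed by the viscosity) and the $\sigma_1$ difference; the convective term is controlled by Ladyzhenskaya's inequality and the stopping time. This gives $\mathbb{E}\sup|j^\varepsilon-\hat j^\varepsilon|^2\to0$ by Bihari's inequality, which replaces Gronwall's because of the non-Lipschitz $\kappa$ and uses $\int_{0^+}du/\kappa=\infty$.

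\emph{Step 2 (main obstacle: the averaging error).} Compare $\hat j^\varepsilon$ with $\bar j$. Apart from terms that Proposition \ref{p1} and the Bihari argument handle, the key quantities are
\begin{align*}
\mathbb{E}\sup_{t\le T}\Big|\int_0^t\big(f(j^\varepsilon_{s(\delta)},\hat\theta^\varepsilon_s)-\bar f(j^\varepsilon_{s(\delta)},u^\varepsilon_{s(\delta)})\big)ds\Big|_H
\end{align*}
and the analogous quantity for $\hat\theta^\varepsilon-g(u^\varepsilon_{s(\delta)})$, tested against $\partial_x$, where $s(\delta)=\lfloor s/\delta\rfloor\delta$. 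On each block, rescaling time by $\varepsilon$ turns $\hat\theta^\varepsilon$ into the frozen process $\tilde\theta^{u,\theta}$ of (\ref{e9}) run for time $\delta/\varepsilon$, conditionally on $\mathscr F_{k\delta}$. The block contribution is then
\begin{align*}
\frac{\varepsilon^2}{\delta^2}\int_0^{\delta/\varepsilon}\!\!\int_r^{\delta/\varepsilon}\Psi(r,r')\,dr'dr,
\end{align*}
where $\Psi$ is the covariance of centred increments. By the Markov property and the exponential mixing (\ref{e14}), extended from Lipschitz test functions to $f$ via the coupling bound (\ref{e11}) and to the $\kappa$-modulus via concavity, $\Psi$ decays like $e^{-\lambda_p(r'-r)/p}$ with a prefactor polynomial in $|\theta|,|j|$; this prefactor is bounded by (\ref{e10}) and (\ref{e13}) and by $M$. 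The result is a bound $C_M(\varepsilon/\delta)^{1/2}$. We then choose $\delta=\varepsilon^{1/2}$, so that both $\delta^{1/2}/\varepsilon\cdot\varepsilon$-type errors and $\varepsilon/\delta$ vanish. The term $\partial_x g$ is dealt with by pairing it against $\nabla(\hat j^\varepsilon-\bar j)$ and absorbing via Young's inequality.

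\emph{Step 3 (conclusion).} Finally, Bihari's inequality applied to $\hat j^\varepsilon-\bar j$, using Proposition \ref{p1} for $\bar f$, $\bar\sigma_1$ and $g$ (evaluated at $j^\varepsilon_{s(\delta)}$ versus $\bar j_s$, with the gap closed by (\ref{e8.5.5})), yields convergence in $L^2$ up to the stopping time. Combining this with the localization gives (\ref{e21}). The tightness statement of Theorem \ref{th3} could serve as an alternative route, through identification of limit points in $\mathcal D([0,T],H)$ together with pathwise uniqueness of (\ref{e15}), if the direct estimate of Step 2 proves too delicate.
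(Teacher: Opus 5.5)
The quantity you reduce to, $\lim_{\varepsilon\to0}\mathbb{E}\sup_{t\le T\wedge\tau_M\wedge\bar\tau_M}|j^\varepsilon_t-\bar j_t|^2_H=0$, does not hold in general under the hypotheses of this theorem. Here $\sigma_1=\sigma_1(j,\theta,z)$ depends on the fast variable, and your Steps 1--3 never average the noise.

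After Step 1, the equation for $\hat j^\varepsilon-\bar j$ contains the martingale $\int_0^t\int_{|z|_H<1}\big(\sigma_1(j^\varepsilon_s,\hat\theta^\varepsilon_s,z)-\bar\sigma_1(\bar j_s,\bar u_s,z)\big)\tilde\eta_1(ds,dz)$. Its second moment is $\mathbb{E}\int_0^t\int_{|z|_H<1}|\sigma_1(j^\varepsilon_s,\hat\theta^\varepsilon_s,z)-\bar\sigma_1(\bar j_s,\bar u_s,z)|^2_H\,\nu_1(dz)\,ds$. Since $\hat\theta^\varepsilon_s$ is asymptotically $\pi^{u}$-distributed, this does not tend to zero even when the slow arguments coincide. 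It tends to the $\pi^{u}$-variance $\int_H|\sigma_1(j,\hat\theta,z)-\bar\sigma_1(j,u,z)|^2_H\,\pi^{u}(d\hat\theta)$, integrated in $\nu_1(dz)\,ds$, which is positive unless $\sigma_1$ is essentially independent of $\theta$.

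Your Step 2 mixing argument gives smallness only for time integrals of centred quantities, never for a squared integrand like this quadratic variation. Proposition \ref{p1} controls $\bar\sigma_1$ only in its slow arguments. So an $\varepsilon$-independent source term survives, and no Bihari argument can close. This is why the paper uses the Khasminskii scheme you describe only in Theorem \ref{th7}, under $\sigma_1\equiv\sigma_1(j,z)$, and remarks that strong convergence fails otherwise.

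Even in that case, there is a second problem. Absorbing $(\partial_x(\hat\theta^\varepsilon-g),\hat j^\varepsilon-\bar j)$ or $(f-\bar f,\hat j^\varepsilon-\bar j)$ by Young's inequality leaves terms like $\int_0^t|\hat\theta^\varepsilon_s-g(u^\varepsilon_{s(\delta)})|^2_H\,ds$, which do not vanish. You must freeze $\hat j^\varepsilon-\bar j$ on the $\delta$-blocks and use its time regularity, so that only the block averages of Step 2 enter.

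The paper proves Theorem \ref{th6} by the route you list only as a fallback:
\begin{enumerate}
\item tightness of $\{\mathscr{L}(j^\varepsilon)\}$ in $\mathcal{D}([0,\tau_M\wedge T];H)$ (Theorem \ref{th3});
\item Skorohod representation along two arbitrary subsequences;
\item identification of both limits as solutions of (\ref{e15}) via the remainder bound (\ref{e22}) of Lemma \ref{l5};
\item uniqueness for (\ref{e15}) and the Gy\"ongy--Krylov criterion, giving convergence in probability up to $\tau_M$;
\item Chebyshev's inequality to remove $\tau_M$.
\end{enumerate}

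The same obstruction reappears there as the term $P_3$ of $R^\varepsilon_\phi$. It is a martingale whose quadratic variation does not vanish in general, so $\mathbb{E}\sup_t|P_3|\to0$ would contradict the lower Burkholder--Davis--Gundy (Davis) inequality. The paper's bound for $P_3$ (and likewise for $P_1$, $P_2$) replaces the $\pi^{u^\varepsilon}$-distributed point $\hat\theta$ by $\tilde\theta^{u^\varepsilon,\hat\theta}_{s/\varepsilon}$, driven by the same noise as $\theta^\varepsilon$, and then invokes (\ref{e11}). Invariance of $\pi^{u^\varepsilon}$ only allows that replacement with an independent copy of the noise, for which (\ref{e11}) gives no decay.

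For $P_1$ and $P_2$, your Step 2 is the correct repair. For the noise term there is none: with $\theta$-dependent $\sigma_1$, convergence in probability to (\ref{e15}) fails in general. Even the limit in law has as its jump kernel the $\pi^{u}$-mixture of the jumps $\sigma_1(j,\hat\theta,z)$, not the averaged jump $\bar\sigma_1$. So switching to the fallback would not save the argument. What your plan can deliver is the case $\sigma_1=\sigma_1(j,z)$, where it is essentially the paper's Theorem \ref{th7} argument with the block-freezing correction above.
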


Let's consider the solution of equation (\ref{e2}) in weak sense, i.e. for any test function $\phi\in C_0^2(\mathbb{T}^2)$,
\begin{align*}
	\left(j^{\varepsilon},\phi\right)+\int_0^t\left(\left(u^{\varepsilon}\cdot \triangledown \right)j^{\varepsilon},\phi\right)ds=&\left(j_0,\phi\right)+\int_0^t\left \langle j^{\varepsilon},\triangle\phi\right \rangle ds+\int_0^t\left(f(j^{\varepsilon},\theta^{\varepsilon}),\phi\right)ds\\
	&+\int_0^t\left(\partial_x\theta^{\varepsilon},\phi\right)ds+\int_0^t\int_{\left|z\right|_H<1}\left(\sigma_1(j^{\varepsilon},\theta^{\varepsilon},z),\phi\right)\tilde{\eta}_1(ds,dz).
\end{align*}
Hence, we have
\begin{align*}
	\left(j^{\varepsilon},\phi\right)+\int_0^t\left(\left(u^{\varepsilon}\cdot \triangledown \right)j^{\varepsilon},\phi\right)ds=&\left(j_0,\phi\right)+\int_0^t\left \langle j^{\varepsilon},\triangle\phi\right \rangle ds+\int_0^t\left(\bar{f}(j^{\varepsilon},u^{\varepsilon}),\phi\right)ds\\
	&+\int_0^t\left(\partial_xg(u^{\varepsilon}),\phi\right)ds+\int_0^t\int_{\left|z\right|_H<1}\left(\bar{\sigma}_1(j^{\varepsilon},u^{\varepsilon},z),\phi\right)\tilde{\eta}_1(ds,dz)\\
	&+R^{\varepsilon }_{\phi}(t).
\end{align*}
where the remainder $R^{\varepsilon }_{\phi}(t)$ is given by
\begin{align*}
R^{\varepsilon }_{\phi}(t)=&\left\{\int_0^t\left(f(j^{\varepsilon},\theta^{\varepsilon}),\phi\right)ds-\int_0^t\left(\bar{f}(j^{\varepsilon},u^{\varepsilon}),\phi\right)ds\right\}_{P_1}+\left\{\int_0^t\left(\partial_x\theta^{\varepsilon},\phi\right)ds-\int_0^t\left(\partial_xg(u^{\varepsilon}),\phi\right)ds\right\}_{P_2}\\
&+\left\{\int_0^t\int_{\left|z\right|_H<1}\left(\sigma_1(j^{\varepsilon},\theta^{\varepsilon},z),\phi\right)\tilde{\eta}_1(ds,dz)-\int_0^t\int_{\left|z\right|_H<1}\left(\bar{\sigma}_1(j^{\varepsilon},u^{\varepsilon},z),\phi\right)\tilde{\eta}_1(ds,dz)\right\}_{P_3}.
\end{align*}
Furthermore, we need the following Lemma.
\begin{lemma}\label{l5}
Under Assumptions \ref{a1}-\ref{a4}, for any $T>0$, we have
\begin{align}\label{e22}
\lim _{\varepsilon \rightarrow 0}\mathbb{E}\mathop{\sup}\limits_{t\in[0,\tau_M \wedge T]}\left|R^{\varepsilon }_{\phi}(t)\right|=0,
\end{align}
where stopping time $\tau_M:=\inf\left\{t\geq 0; \left|j^{\varepsilon}\right|_H>M\right\}$, which already appeared in Theorem \ref{th2}.
\end{lemma}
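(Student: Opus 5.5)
The plan is the classical Khasminskii time-discretization argument, applied to each of $P_1,P_2,P_3$ separately, with the stopping time $\tau_M$ used to keep $|j^\varepsilon|_H\le M$ on the time interval.

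First, fix $\delta=\delta(\varepsilon)>0$, to be chosen later with $\delta\to0$ and $\delta/\varepsilon\to\infty$. Introduce auxiliary processes on each interval $[k\delta,(k+1)\delta)$. Freeze the slow variable at $j^\varepsilon_{k\delta}$ (equivalently $u^\varepsilon_{k\delta}$), and let $\hat\theta^\varepsilon$ solve the second equation of (\ref{e2}) with $u^\varepsilon$ replaced by $u^\varepsilon_{k\delta}$ and $j^\varepsilon$ by $j^\varepsilon_{k\delta}$, started from $\hat\theta^\varepsilon_{k\delta}=\theta^\varepsilon_{k\delta}$. Each term $P_i$ then splits into three pieces:
\begin{itemize}
\item[(a)] the error from replacing $\theta^\varepsilon$ by $\hat\theta^\varepsilon$;
\item[(b)] the error from replacing $j^\varepsilon,u^\varepsilon$ by their frozen values $j^\varepsilon_{s(\delta)}$, where $s(\delta)=\lfloor s/\delta\rfloor\delta$;
\item[(c)] the genuine averaging term, for instance $\int_0^t(f(j^\varepsilon_{s(\delta)},\hat\theta^\varepsilon_s)-\bar f(j^\varepsilon_{s(\delta)},u^\varepsilon_{s(\delta)}),\phi)ds$.
\end{itemize}

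\textbf{Step (a).} Apply Itô's formula to $|\theta^\varepsilon-\hat\theta^\varepsilon|^{2p}_H$. The convection difference $\frac1\varepsilon((u^\varepsilon-u^\varepsilon_{s(\delta)})\cdot\nabla)\theta^\varepsilon$ is controlled with (\ref{e8.5}) exactly as in the proof of (\ref{e12}). The noise difference uses Assumption \ref{a3}, and the dissipation uses $\lambda_p>0$ (Assumption \ref{a4}). The resulting bound is
\[
\mathbb{E}|\theta^\varepsilon_s-\hat\theta^\varepsilon_s|^{2p}_H\le C\varepsilon^{-1}\int_{s(\delta)}^s e^{-\lambda_{p,\gamma}(s-r)/\varepsilon}\Big(\kappa(\cdot)+|j^\varepsilon_r-j^\varepsilon_{r(\delta)}|^2\cdots\Big)dr .
\]
Integrating in time and combining Lemma \ref{l1} (\ref{e8.5.5}) with concavity of $\kappa$ (Jensen) gives a contribution of order $\kappa(C\delta^{1/2})+C\delta^{1/2}$, up to factors $C_M$. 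Here the $\nabla\theta^\varepsilon$ factor produced by the convection term must be absorbed using the $\frac{1}{\varepsilon}\int|\theta^\varepsilon|^{2p-2}|\nabla\theta^\varepsilon|^2$ bound from (\ref{e6}).

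\textbf{Step (b).} For $f$ and $\sigma_1$, this follows directly from Assumptions \ref{a1}, \ref{a2} together with (\ref{e8.5.5}). For $\bar f$, $g$, $\bar\sigma_1$ it follows from Proposition \ref{p1}. For $P_2$, integrate by parts onto $\phi$ first: $(\partial_x\theta,\phi)=-(\theta,\partial_x\phi)$, so only $H$-norms are needed. The stochastic integrals in $P_3$ are treated via BDG, which reduces them to $\mathbb{E}\int_0^{T}\int|\sigma_1(\cdot)-\bar\sigma_1(\cdot)|^2\nu_1(dz)ds$-type quantities. These are handled in the same way, with the averaging step (c) applied to the squared integrand, or more simply bounded through the frozen-coefficient Lipschitz-type estimates.

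\textbf{Step (c), the main obstacle.} On each interval, rescale time by $\varepsilon$, so that $\hat\theta^\varepsilon_{k\delta+\varepsilon r}$ has the law of $\tilde\theta^{u^\varepsilon_{k\delta},\theta^\varepsilon_{k\delta}}_r$ from (\ref{e9}); this uses conditional independence via the Markov property at time $k\delta$. Then
\[
\mathbb{E}\Big|\int_{k\delta}^{(k+1)\delta}(\ldots)ds\Big|^2=\frac{\varepsilon^2}{\cdots}\int_0^{\delta/\varepsilon}\!\!\int_r^{\delta/\varepsilon}\mathbb{E}\big[\Psi(r)\Psi(r')\big]dr'dr ,
\]
and the correlation $\mathbb{E}[\Psi(r)\Psi(r')]$ is bounded by $C(1+|\theta|^{2}+|j|^2)e^{-\lambda_p(r'-r)/p}$ using the exponential mixing (\ref{e14}) and the moment bounds (\ref{e10}), (\ref{e13}). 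This yields $\mathbb{E}\sup_t|\text{(c)}|\le C_{T,M}(\varepsilon/\delta)^{1/2}$.

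The delicate point is that $f(j,\cdot)$ is only non-Lipschitz, while (\ref{e14}) is stated for Lipschitz test functions. I would handle this by approximating: either use (\ref{e11}) directly together with Jensen's inequality on $\kappa$, which gives a decay $\kappa(Ce^{-\lambda_p r})$ instead of an exponential, or smooth $f$. In either case the resulting bound is still integrable after dividing by $\delta/\varepsilon$, which vanishes as $\delta/\varepsilon\to\infty$. Also, $u^\varepsilon_{k\delta}$ is only in $H^1$ with a bound given by $\tau_M$ and Theorem \ref{th2}, so constants depend on $M$.

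Finally, choose $\delta=\varepsilon^{1/2}$ and let $\varepsilon\to0$. Since $\kappa(0)=0$ and $\kappa$ is continuous, every term tends to zero, which gives (\ref{e22}).
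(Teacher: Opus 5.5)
\textbf{The gap is in $P_3$.} Your Khasminskii treatment of $P_1$ and $P_2$ is sound in outline, but the argument for $P_3$ fails. After BDG you are left with $\mathbb{E}(\int_0^{\tau_M\wedge T}F_{j^{\varepsilon}_s,u^{\varepsilon}_s}(\theta^{\varepsilon}_s)\,ds)^{1/2}$ (or its $\eta_1(ds,dz)$ version), where
\[
F_{j,u}(\theta):=\int_{|z|_H<1}\left(\sigma_1(j,\theta,z)-\bar{\sigma}_1(j,u,z),\phi\right)^2\nu_1(dz)\geq 0 .
\]
Step (c) gains only because a mean-zero function of the fast variable is integrated in time \emph{before} it is squared, so the oscillations cancel. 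Here the square sits inside the time integral. Applying Step (c) to $F_{j,u}$ shows that $\int_0^tF_{j^{\varepsilon}_s,u^{\varepsilon}_s}(\theta^{\varepsilon}_s)\,ds$ converges to the time integral of
\[
\int_HF_{j,u}(\theta)\,\pi^{u}(d\theta)=\int_{|z|_H<1}\mathrm{Var}_{\pi^{u}}\left((\sigma_1(j,\cdot,z),\phi)\right)\nu_1(dz),
\]
and not to zero. This limit is strictly positive as soon as $(\sigma_1(j,\theta,z),\phi)$ genuinely depends on $\theta$ and $\pi^{u}$ is not a Dirac mass, e.g.\ for additive nondegenerate $\sigma_2$. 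Your fallback ``frozen-coefficient Lipschitz'' bound fails for the same reason. On a block, $\theta^{\varepsilon}_s$ is not close to any $\mathscr{F}_{s(\delta)}$-measurable quantity; after a time of order $\varepsilon$ it is essentially a fresh sample of $\pi^{u^{\varepsilon}_{s(\delta)}}$.

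\textbf{No estimate can repair this, and the paper's route does not avoid it.} By the isometry for compensated Poisson integrals,
\[
\mathbb{E}|P_3(t\wedge\tau_M)|^2=\mathbb{E}\int_0^{t\wedge\tau_M}F_{j^{\varepsilon}_s,u^{\varepsilon}_s}(\theta^{\varepsilon}_s)\,ds .
\]
So $P_3$ does not vanish in mean square, nor in $L^1$ under uniform higher-moment bounds. This is the mechanism behind the counterexamples quoted at the start of the strong-convergence subsection. Hence (\ref{e22}) can hold only when the variance term above vanishes, e.g.\ for $\sigma_1\equiv\sigma_1(j,z)$; then $P_3\equiv 0$ and your argument for $P_1$, $P_2$ suffices. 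In general the limit has to be identified through a martingale problem whose jump part is the L\'evy kernel of $\sigma_1(j,\theta,z)$ averaged over $\pi^{u}(d\theta)\nu_1(dz)$, not through a vanishing remainder.

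The paper uses no time discretization. It bounds each $P_i$ pointwise in $s$, writing $\theta^{\varepsilon}_s=\tilde{\theta}^{u^{\varepsilon},\theta_0}_{s/\varepsilon}$. It then replaces the $\pi^{u^{\varepsilon}}$-distributed variable $\hat{\theta}$ by the synchronously coupled $\tilde{\theta}^{u^{\varepsilon},\hat{\theta}}_{s/\varepsilon}$ in order to apply (\ref{e11}). Invariance matches only marginals, not the joint law with $\tilde{\theta}^{u^{\varepsilon},\theta_0}_{s/\varepsilon}$, so this step hides the same obstruction for $P_3$. A pointwise-in-$s$ bound of this kind also cannot work for $P_1$, $P_2$, since $\mathbb{E}|\theta^{\varepsilon}_s-g(u^{\varepsilon}_s)|_H$ stays of order one when $\pi^u$ is not a Dirac mass. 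Keeping the absolute value outside the time integral, as you do, is what makes those two terms work.

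\textbf{A smaller point in Step (a).} (\ref{e6}) gives no genuine $\varepsilon$-gain for $\mathbb{E}\int_0^T|\theta^{\varepsilon}|^{2p-2}_H|\nabla\theta^{\varepsilon}|^2_H\,ds$, because the noise feeds energy in at rate $1/\varepsilon$. Only the $O(1)$ bound (\ref{e5}) holds. Instead, move the derivative in the convection error onto $\theta^{\varepsilon}-\hat{\theta}^{\varepsilon}$ and absorb it into the dissipation. Then control $|u^{\varepsilon}_s-u^{\varepsilon}_{s(\delta)}|_{L^\infty}$ by interpolation, using the $H^1$-regularity of $j^{\varepsilon}$ from Theorem~\ref{th2}.
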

\begin{proof}
For $P_1$, according to Assumption \ref{a1}, (\ref{e11}) and (\ref{e13}), we have
\begin{align*}
	\mathbb{E}\sup_{t\in [0,\tau_M \wedge T]}\left|P_1\right|\leq& C\mathbb{E}\sup_{t\in [0,\tau_M \wedge T]}\int_0^t\left|f(j^{\varepsilon},\theta^{\varepsilon})-\bar{f}(j^{\varepsilon},u^{\varepsilon})\right|_Hds\\
	=& C\mathbb{E}\sup_{t\in [0,\tau_M \wedge T]}\int_0^t\left|f(j^{\varepsilon},\tilde{\theta}^{u^{\varepsilon},\theta_0}_{\frac{s}{\varepsilon}})-\int_Hf(j^{\varepsilon},\hat{\theta})\pi^{u^{\varepsilon}}d(\hat{\theta})\right|_Hds\\
	= &C\mathbb{E}\sup_{t\in [0,\tau_M \wedge T]}\int_0^t\left|\int_Hf(j^{\varepsilon},\tilde{\theta}^{u^{\varepsilon},\theta_0}_{\frac{s}{\varepsilon}})-f(j^{\varepsilon},\hat{\theta})\pi^{u^{\varepsilon}}d(\hat{\theta})\right|_Hds\\
    \leq & C\mathbb{E}\sup_{t\in [0,\tau_M \wedge T]}\int_0^t\int_H\left|f(j^{\varepsilon},\tilde{\theta}^{u^{\varepsilon},\theta_0}_{\frac{s}{\varepsilon}})-f(j^{\varepsilon},\hat{\theta})\right|_H\pi^{u^{\varepsilon}}d(\hat{\theta})ds\\
    \leq & C\mathbb{E}\int_0^{\tau_M \wedge T}\int_H\kappa\left(\left|\tilde{\theta}^{u^{\varepsilon},\theta_0}_{\frac{s}{\varepsilon}}-\hat{\theta}\right|_H\right)\pi^{u^{\varepsilon}}d(\hat{\theta})ds\\
    \leq &C\int_0^{\tau_M \wedge T}\int_H\kappa\left(\mathbb{E}\left|\tilde{\theta}^{u^{\varepsilon},\theta_0}_{\frac{s}{\varepsilon}}-\tilde{\theta}^{u^{\varepsilon},\hat{\theta}}_{\frac{s}{\varepsilon}}\right|_H\right)\pi^{u^{\varepsilon}}d(\hat{\theta})ds\\
    \leq & C\int_0^{\tau_M \wedge T}\kappa\left(e^{-\frac{\lambda_ps}{2p\varepsilon}}\int_H\left|\theta_0-\hat{\theta}\right|_H\pi^{u^{\varepsilon}}d(\hat{\theta})\right)ds\\
    \leq &C\int_0^{\tau_M \wedge T}\kappa\left(C_Me^{-\frac{\lambda_ps}{2p\varepsilon}}\right)ds\\
    \leq &C_{T}\kappa\left(\frac{1}{\tau_M \wedge T}\int_0^{\tau_M \wedge T}e^{-\frac{\lambda_ps}{2p\varepsilon}}ds\right)\\
    \leq & C_{T}\kappa\left(C_{T,M,p}\varepsilon\right).
\end{align*}
For $P_2$, we have
\begin{align*}
	\mathbb{E}\sup_{t\in [0,\tau_M \wedge T]}\left|P_2\right|=&\mathbb{E}\sup_{t\in [0,\tau_M \wedge T]}\left|\int_0^t\left(\theta^{\varepsilon}-g(u^{\varepsilon}),\partial_x\phi\right)ds\right|\\
	\le & C\mathbb{E}\sup_{t\in [0,\tau_M \wedge T]}\int_0^t\left|\theta^{\varepsilon}-g(u^{\varepsilon})\right|_Hds\\
	=& C\mathbb{E}\sup_{t\in [0,\tau_M \wedge T]}\int_0^t\left|\tilde{\theta}^{u^{\varepsilon},\theta_0}_{\frac{s}{\varepsilon}}-\int_H\hat{\theta}\pi^{u^{\varepsilon}}d(\hat{\theta})\right|_Hds\\
	\leq &C\mathbb{E}\sup_{t\in [0,\tau_M \wedge T]}\int_0^t\int_H\left|\tilde{\theta}^{u^{\varepsilon},\theta_0}_{\frac{s}{\varepsilon}}-\hat{\theta}\right|_H\pi^{u^{\varepsilon}}d(\hat{\theta})ds\\
	 \leq &C\int_0^{\tau_M \wedge T}\int_H\mathbb{E}\left|\tilde{\theta}^{u^{\varepsilon},\theta_0}_{\frac{s}{\varepsilon}}-\tilde{\theta}^{u^{\varepsilon},\hat{\theta}}_{\frac{s}{\varepsilon}}\right|_H\pi^{u^{\varepsilon}}d(\hat{\theta})ds\\
	   \leq & C\int_0^{\tau_M \wedge T}e^{-\frac{\lambda_p s}{2p\varepsilon}}\int_H\left|\theta_0-\hat{\theta}\right|_H\pi^{u^{\varepsilon}}d(\hat{\theta})ds\\
    \leq &C_M\int_0^{\tau_M \wedge T}e^{-\frac{\lambda_ps}{2p\varepsilon}}ds\\
    \leq & C_{M,p}\varepsilon.
\end{align*}
For $P_3$, by Assumption\ref{a3} and B-D-G's inequality, we have
\begin{align*}
	\mathbb{E}\sup_{t\in [0,\tau_M \wedge T]}\left|P_3\right|\leq & C\mathbb{E}\left(\int_0^{\tau_M \wedge T}\int_{\left|z\right|_H<1}\left|\sigma_1(j^{\varepsilon},\theta^{\varepsilon},z)-\bar{\sigma}_1(j^{\varepsilon},u^{\varepsilon},z)\right|^2_H\eta_1(ds,dz)\right)^{\frac{1}{2}}\\
	\leq &C\left(\mathbb{E}\int_0^{\tau_M \wedge T}\int_{\left|z\right|_H<1}\left|\sigma_1(j^{\varepsilon},\tilde{\theta}^{u^{\varepsilon},\theta_0}_{\frac{s}{\varepsilon}},z)-\int_H\sigma_1(j^{\varepsilon},\hat{\theta},z)\pi^{u^{\varepsilon}}d(\hat{\theta})\right|^2_H\eta_1(ds,dz)\right)^{\frac{1}{2}}\\
	\leq&C\left(\mathbb{E}\int_0^{\tau_M \wedge T}\int_{\left|z\right|_H<1}\int_H\left|\sigma_1(j^{\varepsilon},\tilde{\theta}^{u^{\varepsilon},\theta_0}_{\frac{s}{\varepsilon}},z)-\sigma_1(j^{\varepsilon},\hat{\theta},z)\right|^2_H\pi^{u^{\varepsilon}}d(\hat{\theta})\eta_1(ds,dz)\right)^{\frac{1}{2}}\\
	=&C\left(\int_0^{\tau_M \wedge T}\int_H\mathbb{E}\int_{\left|z\right|_H<1}\left|\sigma_1(j^{\varepsilon},\tilde{\theta}^{u^{\varepsilon},\theta_0}_{\frac{s}{\varepsilon}},z)-\sigma_1(j^{\varepsilon},\tilde{\theta}^{u^{\varepsilon},\hat{\theta}}_{\frac{s}{\varepsilon}},z)\right|^2_H\eta_1(ds,dz)\pi^{u^{\varepsilon}}d(\hat{\theta})\right)^{\frac{1}{2}}\\
	\leq &C\left(\int_0^{\tau_M \wedge T}\int_H\kappa\left(\mathbb{E}\left|\tilde{\theta}^{u^{\varepsilon},\theta_0}_{\frac{s}{\varepsilon}}-\tilde{\theta}^{u^{\varepsilon},\hat{\theta}}_{\frac{s}{\varepsilon}}\right|^2_H\right)\pi^{u^{\varepsilon}}d(\hat{\theta})ds\right)^{\frac{1}{2}}\\
	\leq &C\left(\int_0^{\tau_M \wedge T}\int_H\kappa\left(e^{-\frac{\lambda_ps}{p\varepsilon}}\left|\theta_0-\hat{\theta}\right|^2_H\right)\pi^{u^{\varepsilon}}d(\hat{\theta})ds\right)^{\frac{1}{2}}\\
	\leq &C\left(\int_0^{\tau_M \wedge T}\kappa\left(e^{-\frac{\lambda_ps}{p\varepsilon}}\int_H\left|\theta_0-\hat{\theta}\right|^2_H\pi^{u^{\varepsilon}}d(\hat{\theta})\right)ds\right)^{\frac{1}{2}}\\
	\leq & C\left(\int_0^{\tau_M \wedge T}\kappa\left(C_Me^{-\frac{\lambda_ps}{p\varepsilon}}\right)ds\right)^{\frac{1}{2}}\\
	\leq &C_T\left(\kappa\left(\frac{1}{\tau_M \wedge T}\int_0^{\tau_M \wedge T}e^{-\frac{\lambda_ps}{p\varepsilon}}ds\right)\right)^{\frac{1}{2}}\\
	\leq & C_{T}\left[\kappa\left(C_{T,M,p}\varepsilon\right)\right]^{\frac{1}{2}}.
\end{align*}

Let $\varepsilon\rightarrow 0$, we obtain that
\begin{align*}
	\lim _{\varepsilon \rightarrow 0}\mathbb{E}\mathop{\sup}\limits_{t\in[0,\tau_M \wedge T]}\left|R^{\varepsilon }_{\phi}(t)\right|=0.
\end{align*}
\end{proof}

\textbf{Proof of Theorem \ref{th6}}. According to Theorem \ref{th3}, the distribution family $\left \{\mathscr{L} (  j^{\varepsilon}  )\right \}_{\epsilon \in(0,1]}$ is tight in $\mathcal{D}\left ([0,T\wedge \tau _M];H\right )$. By the Skorohod representation theorem, for any two subsequences $\left \{\varepsilon ^1_n\right \}_{n\in\mathbb{N}}$ and $\left \{\varepsilon ^2_n\right \}_{n\in\mathbb{N}}$, there exists two sequences $\left \{\hat{j}^{\varepsilon^1_n}\right \}_{n\in \mathbb{N}}$, $\left \{\hat{j}^{\varepsilon^2_n}\right \}_{n\in \mathbb{N}}$ and stochastic processes $\hat{j}^1$, $\hat{j}^2\in \mathcal{D}\left ([0,T\wedge \tau _M];H\right )$ on some probability space $\left ( \hat{\Omega },\hat{\mathcal{F}},\hat{\mathbb{P}}\right )$ such that
\begin{align*}
\left \{j^{\varepsilon^k_n}\right \}_{\varepsilon^k _n\in (0,1]} \xlongequal{in\ law} \quad\left \{\hat{j}^{\varepsilon^k_n}\right \}_{n\in \mathbb{N}},\ k=1,2,
\end{align*}
and
\begin{align*}
\hat{j}^{\varepsilon^k_n}\rightarrow \hat{j}^k,\  \hat{\mathbb{P}}-a.s.\ on\  \mathcal{D}\left ([0,T\wedge \tau _M];H\right ) ,\ as\ n \rightarrow \infty,\ k=1,2.
\end{align*}
As shown by Cerrai and Freidlin in \cite{Cerrai1}, (\ref{e22}) implies that both $\hat{j}^1$ and $\hat{j}^2$ fulfill averaged equations (\ref{e15}). By uniqueness of the solution of equations (\ref{e15}), we have $\hat{j}^1=\hat{j}^2$. Due to the well known argument form Gy\"{o}ngy and Krylov \cite{Gyongy}, there exists some $\bar{j}\in \mathcal{D}([0,T\wedge \tau _M],H)$ such that
\begin{align}\label{e23}
\lim _{\varepsilon \rightarrow 0}\mathbb{P}\left ( \mathop{\sup}\limits_{t\in[0,\tau _M\wedge T]}\left | j^{\varepsilon}-\bar{j}_t\right |_H>\eta \right )=0.
\end{align}
By Chebyshev's inequality, (\ref{e4}) and (\ref{e19}), we have
\begin{align*}
\mathbb{P}\left ( \sup_{t\in [0,T]}\left | j^{\varepsilon}-\bar{j}\right |_H\chi \left \{t>\tau _M\right \}>\eta \right )\leq & \frac{\mathbb{E}\left ( \sup_{t\in [0,T]}\left | j^{\varepsilon}-\bar{j}\right |_H\chi \left \{t>\tau _M\right \} \right )}{\eta }\\
\leq & \frac{1}{\eta }\left ( \mathbb{E}\sup_{t\in [0,T]}\left | j^{\varepsilon}-\bar{j}\right |^2_H\right )^{\frac{1}{2}}\left (\mathbb{P}\left \{t>\tau _M\right \} \right )^{\frac{1}{2}}\\
\leq & \frac{C_T}{\eta }\frac{\left ( \mathbb{E}\sup_{t\in [0,T]}\left | j^{\varepsilon}\right |^{2p}_H\right )^{\frac{1}{2}}}{M^{p}}\\
\leq & \frac{C_{T,p}}{\eta }\cdot M^{-p},
\end{align*}
where $\chi \left \{\cdot \right \}$ is the indicator function.

Combining (\ref{e23}), and Let $\varepsilon \rightarrow 0$ firstly, $M\rightarrow \infty $ secondly, we have
\begin{align*}
\lim _{\varepsilon \rightarrow 0}\mathbb{P}\left ( \mathop{\sup}\limits_{t\in[0,T]}\left | j^{\varepsilon}-\bar{j}\right |_H>\eta \right )=0.
\end{align*}

\rightline{$\qedsymbol$} 
%% -------------------------------------------------------------------
\subsection{Strong convergence of the averaging principle}
We will prove the second main result in this section, that is, slow variables can strongly converge to the solution of the averaged equations (\ref{e15}) in the $p$th-mean. Previously, it needs to be emphasized that we will make $\sigma _1(j,\theta,z)\equiv \sigma _1(j,z)$ and so $\bar{\sigma}_1(j,u,z)\equiv\sigma_1(j,z)$. Certain counterexamples illustrate that strong convergence is not ture when the noise term of the slow equations depends on the fast variable, see \cite{Dror,Di}.

The strong convergence argumentation method originated from \cite{Khasminskii2} by Khasminskii. In particular, for any $T>0$ and $t\in \left [ k\delta ,min\left \{(k+1)\delta ,T\right \}\right ]$, $k=0,1,2,\cdots $, we construct auxiliary process
\begin{align*}
\hat{\theta}^{\varepsilon}_t+\frac{1}{\varepsilon}\int_{k\delta}^t\left(u^{\varepsilon}_{k\delta}\cdot \triangledown\right)\hat{\theta}^{\varepsilon}_sds=\hat{\theta}^{\varepsilon}_{k\delta}+\frac{1}{\varepsilon}\int_{k\delta}^t\triangle\hat{\theta}^{\varepsilon}_sds+\int_{k\delta}^t\int_{\left|z\right|_{H}<1}\sigma_2(j^{\varepsilon}_{k\delta},\hat{\theta}^{\varepsilon}_s,z)\tilde{\eta}^{\varepsilon}_2(ds,dz)
\end{align*}
with $\hat{\theta }^{\varepsilon}_0=\theta_0$. This implies that
\begin{align*}
\hat{\theta}^{\varepsilon}_t+\frac{1}{\varepsilon}\int_{0}^t\left(u^{\varepsilon}_{s(\delta)}\cdot \triangledown\right)\hat{\theta}^{\varepsilon}_sds=\theta_0+\frac{1}{\varepsilon}\int_{0}^t\triangle\hat{\theta}^{\varepsilon}_sds+\int_{0}^t\int_{\left|z\right|_{H}<1}\sigma_2(j^{\varepsilon}_{s(\delta)},\hat{\theta}^{\varepsilon}_s,z)\tilde{\eta}^{\varepsilon}_2(ds,dz),
\end{align*}
where $s-\delta\leq s(\delta):=\left[\frac{s}{\delta}\right]\delta<s$, and $\left[\cdot\right]$ denotes the floor function. In addition, we define
\begin{align*}
\hat{j}^{\varepsilon}_t+\int_0^t\left(u^{\varepsilon}_{s}\cdot \triangledown\right)\hat{j}^{\varepsilon}_sds=&j_0+\int_0^t\triangle \hat{j}^{\varepsilon}_sds+\int_0^tf(j^{\varepsilon}_{s(\delta)},\hat{\theta}^{\varepsilon}_s)ds+\int_0^t\partial_x\hat{\theta}^{\varepsilon}_sds\\
&+\int_0^t\int_{\left|z\right|_H<1}\sigma_1(j^{\varepsilon}_s,z)\tilde{\eta}_1(ds,dz),
\end{align*}
where $\hat{j}^{\varepsilon}:=\triangledown^{\perp }\cdot \hat{u}^{\varepsilon}$. It is easy to obtain that for any $T>0$, $(j_0,\theta_0)\in H\times H$, and for some $p\geq 1$, 
\begin{align}\label{e24}
	\sup_{t\in[0,T]}\mathbb{E}\left|\hat{\theta}^{\varepsilon}_t\right|^{2p}_H+\mathbb{E}\int_0^T\left|\hat{\theta}^{\varepsilon}_s\right|^{2p-2}_H\left|\triangledown \hat{\theta}^{\varepsilon}_s\right|^2_Hds\leq C_T\left(1+\left|\theta_0\right|^{2p}_H+\left|j_0\right|^{2p}_H\right),
\end{align}
and
\begin{align}\label{e25}
	\mathbb{E}\sup_{t\in[0,T]}\left|\hat{j}^{\varepsilon}_t\right|^{2p}_H+\mathbb{E}\int_0^T\left|\hat{j}^{\varepsilon}_s\right|^{2p-2}_H\left|\triangledown \hat{j}^{\varepsilon}_s\right|^2_Hds\leq C_T\left(1+\left|\theta_0\right|^{2p}_H+\left|j_0\right|^{2p}_H\right).
\end{align}

We claim the following Lemma:
\begin{lemma}\label{l6}
Under Assumptions \ref{a1}-\ref{a4}, for any $T>0$, $\left ( j_0,\theta_0\right )\in H\times H$ and for some $p\geq 1$, we have
\begin{align}\label{e25}
	\mathbb{E}\int_0^{\tau _{M,L}\wedge T}\left|\hat{\theta}^{\varepsilon}_t-\theta^{\varepsilon}_t\right|^{2p}_Hdt\leq \mathcal{O}\left(\delta \right),
\end{align}
where $\tau _{M,L}:=\tau_M \wedge \inf\left\{t\geq 0; \left|\theta^{\varepsilon}_t\right|_H+\left|\hat{\theta}^{\varepsilon}_t\right|_H>L\right\}$ and $\mathcal{O}\left(\delta \right):=C_{p,T}\kappa\left(C_{T,M}\delta^{\frac{1}{2}}\right)+C_{p,T,M,L}\delta^{\frac{1}{2p}}$.
\end{lemma}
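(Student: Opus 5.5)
Set $\rho^{\varepsilon}_t:=\hat{\theta}^{\varepsilon}_t-\theta^{\varepsilon}_t$. Subtracting the equation for $\theta^{\varepsilon}$ from the one for $\hat{\theta}^{\varepsilon}$ gives $\rho^{\varepsilon}_0=0$ and
\begin{align*}
d\rho^{\varepsilon}_t=&\frac{1}{\varepsilon}\triangle\rho^{\varepsilon}_tdt-\frac{1}{\varepsilon}\left[\left(u^{\varepsilon}_{t(\delta)}\cdot\triangledown\right)\hat{\theta}^{\varepsilon}_t-\left(u^{\varepsilon}_t\cdot\triangledown\right)\theta^{\varepsilon}_t\right]dt\\
&+\int_{\left|z\right|_H<1}\left[\sigma_2(j^{\varepsilon}_{t(\delta)},\hat{\theta}^{\varepsilon}_t,z)-\sigma_2(j^{\varepsilon}_t,\theta^{\varepsilon}_t,z)\right]\tilde{\eta}^{\varepsilon}_2(dt,dz).
\end{align*}
We apply It\^{o}'s formula to $|\rho^{\varepsilon}_t|^{2p}_H$, take expectations, and follow the proof of (\ref{e11}) and (\ref{e12}).

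\textbf{Convection term.} Split it as
\[
\left(u^{\varepsilon}_{t(\delta)}\cdot\triangledown\right)\rho^{\varepsilon}_t+\left((u^{\varepsilon}_{t(\delta)}-u^{\varepsilon}_t)\cdot\triangledown\right)\theta^{\varepsilon}_t .
\]
The first part vanishes when paired with $\rho^{\varepsilon}_t$. For the second, use antisymmetry to write it as $\left(((u^{\varepsilon}_{t(\delta)}-u^{\varepsilon}_t)\cdot\triangledown)\rho^{\varepsilon}_t,\theta^{\varepsilon}_t\right)$. Then estimate it by (\ref{e8.5}) with $s=(1,0,0)$ replaced by an admissible choice, e.g.\ $(\tfrac12,\tfrac12,0)$ or $(1,0,\cdot)$ with interpolation, together with the bound $|u|_{H^1}\le C|j|_H$. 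Young's inequality then bounds it by
\[
\frac{\gamma}{\varepsilon}|\rho^{\varepsilon}_t|^{2p-2}_H|\triangledown\rho^{\varepsilon}_t|^2_H+\frac{C}{\varepsilon}|\rho^{\varepsilon}_t|^{2p-2}_H|j^{\varepsilon}_{t(\delta)}-j^{\varepsilon}_t|^2_H|\theta^{\varepsilon}_t|^2_{H^1}.
\]
Before $\tau_{M,L}$ we have $|\rho^{\varepsilon}|_H\le L$. Using Young once more, this error is controlled by $\frac{C_L}{\varepsilon}|j^{\varepsilon}_{t(\delta)}-j^{\varepsilon}_t|_H^{2}|\theta^{\varepsilon}_t|^2_{H^1}$.

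\textbf{Noise term.} By (\ref{e3}) and Assumption \ref{a3}, as in Lemma \ref{l3}, it contributes at most
\[
\frac{C}{\varepsilon}\left(L_{\sigma_2}|\rho^{\varepsilon}_t|^{2p}_H+\kappa\left(|j^{\varepsilon}_{t(\delta)}-j^{\varepsilon}_t|^{2p}_H\right)\right)
\]
plus the usual $\frac{M_p(p-1)}{p\varepsilon}$ term. Assumption \ref{a4} absorbs these into the dissipation.

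\textbf{Resulting differential inequality.} Together, on $[0,\tau_{M,L}\wedge T]$,
\[
\frac{d}{dt}\mathbb{E}|\rho^{\varepsilon}_t|^{2p}_H\le-\frac{\lambda_{p,\gamma}}{\varepsilon}\mathbb{E}|\rho^{\varepsilon}_t|^{2p}_H+\frac{C}{\varepsilon}\mathbb{E}\,\kappa\left(|j^{\varepsilon}_{t(\delta)}-j^{\varepsilon}_t|^{2p}_H\right)+\frac{C_{L}}{\varepsilon}\mathbb{E}\left[|j^{\varepsilon}_{t(\delta)}-j^{\varepsilon}_t|^2_H|\theta^{\varepsilon}_t|^2_{H^1}\right].
\]
The stopping is handled by working with $\rho^{\varepsilon}_{t\wedge\tau_{M,L}}$ and indicators. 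By the variation-of-constants formula,
\[
\mathbb{E}|\rho^{\varepsilon}_t|^{2p}_H\le\frac{C}{\varepsilon}\int_0^te^{-\frac{\lambda_{p,\gamma}}{\varepsilon}(t-s)}\left(\cdots\right)ds .
\]
Integrating in $t$ over $[0,\tau_{M,L}\wedge T]$ and using Fubini, $\frac1\varepsilon\int_s^Te^{-\lambda(t-s)/\varepsilon}dt\le C$. This yields
\[
\mathbb{E}\int_0^{\tau_{M,L}\wedge T}|\rho^{\varepsilon}_t|^{2p}_Hdt\le C\,\mathbb{E}\int_0^{\tau_{M,L}\wedge T}\kappa\left(|j^{\varepsilon}_{s(\delta)}-j^{\varepsilon}_s|^{2p}_H\right)ds+C_L\,\mathbb{E}\int_0^{\tau_{M,L}\wedge T}|j^{\varepsilon}_{s(\delta)}-j^{\varepsilon}_s|^2_H|\theta^{\varepsilon}_s|^2_{H^1}ds.
\]

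\textbf{Bounding the two terms.} For the first, Jensen's inequality for concave $\kappa$ and (\ref{e8.5.5}) give $C_{p,T}\kappa(C_{T,M}\delta^{1/2})$. This needs (\ref{e8.5.5}) with $s(\delta)$ in place of $s-\delta$; the same proof works since $s-s(\delta)\le\delta$.

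The second term is the main obstacle, because $|\theta^{\varepsilon}|_{H^1}^2$ is only integrable with a $1/\varepsilon$-sized weight. More precisely, (\ref{e5}) gives $\varepsilon^{-1}\mathbb{E}\int|\theta^{\varepsilon}|^{2p-2}|\triangledown\theta^{\varepsilon}|^2\le C$, so a bound uniform in $\varepsilon$ is available, but it must be used without the $1/\varepsilon$ weight. I would try H\"older with exponents $p$ and $p/(p-1)$:
\[
|j_{s(\delta)}-j_s|^2|\theta|^2_{H^1}\le|j_{s(\delta)}-j_s|^{2}\,|\theta|^{2/p}_{H^1}\,|\theta|^{2-2/p}_{H^1}.
\]
Before the stopping time, $|j_{s(\delta)}-j_s|_H\le 2M$. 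Then use H\"older in $(\omega,s)$: apply (\ref{e8.5.5}) to get the $\delta^{1/2}$ factor, raised to the power $1/p$, which gives $\delta^{1/(2p)}$. Bound the $\int|\theta|^2_{H^1}$ factor via the uniform estimate on $\mathbb{E}\int|\theta^{\varepsilon}|^2_{H^1}$ from (\ref{e5}) (valid since $\varepsilon\le1$), with the constant depending on $M,L$. This produces $C_{p,T,M,L}\delta^{1/(2p)}$. If the $\varepsilon^{-1}$ weight from the Gronwall kernel cannot be removed cleanly, I would instead keep the convection error inside the kernel and use (\ref{e12})-type bounds; the target remains $\mathcal{O}(\delta)$.
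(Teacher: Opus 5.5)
Up to the reduction
\[
\mathbb{E}\int_0^{\tau_{M,L}\wedge T}|\hat{\theta}^{\varepsilon}_t-\theta^{\varepsilon}_t|^{2p}_H\,dt\le C_{p,T}\,\kappa\big(C_{T,M}\delta^{\frac12}\big)+C_{p,T,L}\,\mathbb{E}\int_0^{\tau_{M,L}\wedge T}|j^{\varepsilon}_t-j^{\varepsilon}_{t(\delta)}|^2_H\,|\triangledown\theta^{\varepsilon}_t|^2_H\,dt,
\]
your argument is the paper's. The gap is the last term, which you flag as the obstacle but do not control.

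\textbf{Why your H\"older step fails.} Uniformly in $\varepsilon$, the only gradient information is $\mathbb{E}\int_0^T|\triangledown\theta^{\varepsilon}_t|^2_H\,dt\le C_T$ from (\ref{e5}). No higher integrability of $|\triangledown\theta^{\varepsilon}|_H$ is available. So H\"older cannot split off a power of $\mathbb{E}\int|j^{\varepsilon}_t-j^{\varepsilon}_{t(\delta)}|^{2p}_H\,dt$: the conjugate factor would involve $|\theta^{\varepsilon}|^{2p/(p-1)}_{H^1}$. Writing $|\theta|^2_{H^1}=|\theta|^{2/p}_{H^1}|\theta|^{2-2/p}_{H^1}$ changes nothing, since the factor paired with $|j^{\varepsilon}_t-j^{\varepsilon}_{t(\delta)}|^{2p}_H$ still carries $|\theta^{\varepsilon}|^2_{H^1}$. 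For $p=1$ your step is just $4M^2\,\mathbb{E}\int|\theta^{\varepsilon}_t|^2_{H^1}dt=O(1)$, with no decay in $\delta$.

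Smallness cannot come from $\varepsilon$ either. The weighted bound $\varepsilon^{-1}\mathbb{E}\int|\theta^{\varepsilon}|^{2p-2}_H|\triangledown\theta^{\varepsilon}|^2_H\,dt\le C$ appears in the proof of Theorem \ref{th1}, not in (\ref{e5}) itself, and it is false in general. For $p=1$, It\^{o}'s formula gives
\[
2\,\mathbb{E}\int_0^T|\triangledown\theta^{\varepsilon}_t|^2_H\,dt=\varepsilon\big(|\theta_0|^2_H-\mathbb{E}|\theta^{\varepsilon}_T|^2_H\big)+\mathbb{E}\int_0^T\int_{|z|_H<1}|\sigma_2(j^{\varepsilon}_t,\theta^{\varepsilon}_t,z)|^2_H\,\nu_2(dz)\,dt,
\]
which is of order one for, e.g., additive noise. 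Your fallback via (\ref{e12}) contains the same product. In Proposition \ref{p1} that product was only disposed of through Lemma \ref{l4} as $t\to\infty$, which is unavailable on $[0,T]$.

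\textbf{The paper's idea.} The paper uses exactly this identity pathwise. Multiply
\[
|\triangledown\theta^{\varepsilon}_t|^2_H\,dt=-\frac{\varepsilon}{2}\,d|\theta^{\varepsilon}_t|^2_H+dN_t+\frac{\varepsilon}{2}\int_{|z|_H<1}|\sigma_2(j^{\varepsilon}_t,\theta^{\varepsilon}_t,z)|^2_H\,\eta_2(\tfrac{1}{\varepsilon}dt,dz)
\]
($N$ a martingale) by $|j^{\varepsilon}_t-j^{\varepsilon}_{t(\delta)}|^2_H$. This trades the merely integrable $|\triangledown\theta^{\varepsilon}_t|^2_H$ for the rate $\frac12\int|\sigma_2|^2_H\nu_2(dz)\le C_{M,L}$, which is bounded pointwise before $\tau_{M,L}$, plus terms carrying a factor $\varepsilon$. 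What remains is $C_{M,L}\,\mathbb{E}\int_0^{\tau_{M,L}\wedge T}|j^{\varepsilon}_t-j^{\varepsilon}_{t(\delta)}|^2_H\,dt\le C_{T,M,L}\,\delta^{\frac{1}{2p}}$, by H\"older and (\ref{e8.5.5}).

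Do not copy the paper's B-D-G step on $\int|j^{\varepsilon}_t-j^{\varepsilon}_{t(\delta)}|^2_H\,d|\theta^{\varepsilon}_t|^2_H$. That integral is not a martingale: its drift contains $-\frac{2}{\varepsilon}$ times the very integral being estimated. Instead, integrate by parts on each block $[k\delta,(k+1)\delta)$, where $|j^{\varepsilon}_t-j^{\varepsilon}_{t(\delta)}|_H$ starts at $0$. The endpoint terms then have a favourable sign, and the remainder carries the factor $\varepsilon$.

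\textbf{A simpler repair inside your own plan.} Keep the interpolation you mention. Use $s=(1,0,\frac12)$ in (\ref{e8.5}); note that $(\frac12,\frac12,0)$ would need $H^{3/2}$. Combined with $|\theta|_{H^{1/2}}\le C|\theta|^{1/2}_H|\triangledown\theta|^{1/2}_H$, Young produces the error
\[
\frac{C}{\varepsilon}|\hat{\theta}^{\varepsilon}_t-\theta^{\varepsilon}_t|^{2p-2}_H|j^{\varepsilon}_t-j^{\varepsilon}_{t(\delta)}|^2_H|\theta^{\varepsilon}_t|_H|\triangledown\theta^{\varepsilon}_t|_H,
\]
which is linear in the gradient. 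After the kernel step, apply Cauchy--Schwarz in $(\omega,t)$ together with $|\theta^{\varepsilon}_t|_H\le L$, $|j^{\varepsilon}_t-j^{\varepsilon}_{t(\delta)}|_H\le 2M$ and $\mathbb{E}\int_0^T|\triangledown\theta^{\varepsilon}_t|^2_H\,dt\le C_T$. This bounds the term by $C_{T,M,L}\,\delta^{\frac{1}{4p}}$. That is slightly weaker than the stated $\delta^{\frac{1}{2p}}$, but enough for Theorem \ref{th7}.
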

\begin{proof}
By It\^{o}'s formula, we have
\begin{align*}
&\left|\hat{\theta}^{\varepsilon}_t-\theta^{\varepsilon}_t\right|^{2p}_H+\frac{2p}{\varepsilon}\int_0^t\left|\hat{\theta}^{\varepsilon}_s-\theta^{\varepsilon}_s\right|^{2p-2}_H\left(\left(u^{\varepsilon}_{s(\delta)}\cdot\triangledown\right)\hat{\theta}^{\varepsilon}_s-\left(u^{\varepsilon}_s\cdot \triangledown\right)\theta^{\varepsilon}_s,\hat{\theta}^{\varepsilon}_s-\theta^{\varepsilon}_s\right)ds\\
=&\frac{2p}{\varepsilon}\int_0^t\left|\hat{\theta}^{\varepsilon}_s-\theta^{\varepsilon}_s\right|^{2p-2}_H\left \langle \triangle\left(\hat{\theta}^{\varepsilon}_s-\theta^{\varepsilon}_s\right),\hat{\theta}^{\varepsilon}_s-\theta^{\varepsilon}_s \right \rangle ds\\
&+2p\int_0^t\int_{\left|z\right|_H<1}\left|\hat{\theta}^{\varepsilon}_s-\theta^{\varepsilon}_s\right|^{2p-2}_H\left(\hat{\theta}^{\varepsilon}_s-\theta^{\varepsilon}_s,\sigma_2(j^{\varepsilon}_{s(\delta)},\hat{\theta}^{\varepsilon}_s,z)-\sigma_2(j^{\varepsilon}_s,\theta^{\varepsilon}_s,z)\right)\tilde{\eta}^{\varepsilon}_2(ds,dz)\\
&+\int_0^t\int_{\left|z\right|_H<1}\left[\left|\left(\hat{\theta}^{\varepsilon}_s-\theta^{\varepsilon}_s\right)+\left(\sigma_2(j^{\varepsilon}_{s(\delta)},\hat{\theta}^{\varepsilon}_s,z)-\sigma_2(j^{\varepsilon}_s,\theta^{\varepsilon}_s,z)\right)\right|^{2p}_H-\left|\hat{\theta}^{\varepsilon}_s-\theta^{\varepsilon}_s\right|^{2p}_H\right.\\
&\left.-2p\left|\hat{\theta}^{\varepsilon}_s-\theta^{\varepsilon}_s\right|^{2p-2}_H\left(\hat{\theta}^{\varepsilon}_s-\theta^{\varepsilon}_s,\sigma_2(j^{\varepsilon}_{s(\delta)},\hat{\theta}^{\varepsilon}_s,z)-\sigma_2(j^{\varepsilon}_s,\theta^{\varepsilon}_s,z)\right)\right]\eta^{\varepsilon}_2(ds,dz).
\end{align*}
By (\ref{e8.5}) and Young's inequality, we have
\begin{align*}
	&\frac{d}{dt}\mathbb{E}\left|\hat{\theta}^{\varepsilon}_t-\theta^{\varepsilon}_t\right|^{2p}_H+\frac{p\gamma}{\varepsilon}\mathbb{E}\left|\hat{\theta}^{\varepsilon}_t-\theta^{\varepsilon}_t\right|^{2p-2}_H\left|\triangledown\left(\hat{\theta}^{\varepsilon}_t-\theta^{\varepsilon}_t\right)\right|^{2}_H\\
	\leq& -\frac{1}{\varepsilon}\left(2p\lambda\left(1-\frac{\gamma}{2}\right)-\frac{M_p(p-1)}{p}\right)\mathbb{E}\left|\hat{\theta}^{\varepsilon}_t-\theta^{\varepsilon}_t\right|^{2p}_H+\frac{2p}{\varepsilon}\mathbb{E}\left|\hat{\theta}^{\varepsilon}_t-\theta^{\varepsilon}_t\right|^{2p-2}_H\left|j^{\varepsilon}_t-j^{\varepsilon}_{t(\delta)}\right|_H\left|\hat{\theta}^{\varepsilon}_t-\theta^{\varepsilon}_t\right|_{H^1}\left|\theta^{\varepsilon}_t\right|_{H^1}\\
	&+\frac{M_p(p+1)}{\varepsilon p}\mathbb{E}\int_{\left|z\right|_H<1}\left|\sigma_2(j^{\varepsilon}_{t(\delta)},\hat{\theta}^{\varepsilon}_t,z)-\sigma_2(j^{\varepsilon}_t,\theta^{\varepsilon}_t,z)\right|^{2p}_H\nu_2(dz),
\end{align*}
and
\begin{align*}
\frac{d}{dt}\mathbb{E}\left|\hat{\theta}^{\varepsilon}_t-\theta^{\varepsilon}_t\right|^{2p}_H\leq &-\frac{\lambda_{p,\gamma}}{\varepsilon}\mathbb{E}\left|\hat{\theta}^{\varepsilon}_t-\theta^{\varepsilon}_t\right|^{2p}_H+\frac{C_p}{\varepsilon}\kappa\left(\mathbb{E}\left|j^{\varepsilon}_t-j^{\varepsilon}_{t(\delta)}\right|^{2p}_H\right)\\
&+\frac{C_p}{\varepsilon}\mathbb{E}\left|\hat{\theta}^{\varepsilon}_t-\theta^{\varepsilon}_t\right|^{2p-2}_H\left|j^{\varepsilon}_t-j^{\varepsilon}_{t(\delta)}\right|^2_H\left|\triangledown\theta^{\varepsilon}_t\right|^2_{H}
\end{align*}
with $\gamma$ small enough. Using Gronwall's inequality , we have
\begin{align*}
\mathbb{E}\left|\hat{\theta}^{\varepsilon}_t-\theta^{\varepsilon}_t\right|^{2p}_H\leq & \frac{C_p}{\varepsilon}\int_0^te^{-\frac{\lambda_{p,\gamma}}{\varepsilon}(t-s)}\kappa\left(\mathbb{E}\left|j^{\varepsilon}_s-j^{\varepsilon}_{s(\delta)}\right|^{2p}_H\right)ds\\
&+\frac{C_p}{\varepsilon}\int_0^te^{-\frac{\lambda_{p,\gamma}}{\varepsilon}(t-s)}\mathbb{E}\left|\hat{\theta}^{\varepsilon}_s-\theta^{\varepsilon}_s\right|^{2p-2}_H\left|j^{\varepsilon}_s-j^{\varepsilon}_{s(\delta)}\right|^2_H\left|\triangledown\theta^{\varepsilon}_s\right|^2_{H}ds.
\end{align*}
According to Young’s convolution inequality and (\ref{e8.5.5}), we have
\begin{align*}
	&\mathbb{E}\int_0^{\tau _{M,L}\wedge T}\left|\hat{\theta}^{\varepsilon}_t-\theta^{\varepsilon}_t\right|^{2p}_Hdt\\
	\leq & \frac{C_p}{\varepsilon}\int_0^{\tau _{M,L}\wedge T}\int_0^te^{-\frac{\lambda_{p,\gamma}}{\varepsilon}(t-s)}\kappa\left(\mathbb{E}\left|j^{\varepsilon}_s-j^{\varepsilon}_{s(\delta)}\right|^{2p}_H\right)dsdt\\
	&+\frac{C_p}{\varepsilon}\int_0^{\tau _{M,L}\wedge T}\int_0^te^{-\frac{\lambda_{p,\gamma}}{\varepsilon}(t-s)}\mathbb{E}\left|\hat{\theta}^{\varepsilon}_s-\theta^{\varepsilon}_s\right|^{2p-2}_H\left|j^{\varepsilon}_s-j^{\varepsilon}_{s(\delta)}\right|^2_H\left|\triangledown\theta^{\varepsilon}_s\right|^2_{H}dsdt\\
	\leq &\frac{C_p}{\varepsilon}\int_0^{\tau _{M,L}\wedge T}e^{-\frac{\lambda_{p,\gamma}}{\varepsilon}t}dt\cdot \left(\int_0^{\tau _{M,L}\wedge T}\kappa\left(\mathbb{E}\left|j^{\varepsilon}_t-j^{\varepsilon}_{t(\delta)}\right|^{2p}_H\right)dt\right.\\
	&\left.+\mathbb{E}\int_0^{\tau _{M,L}\wedge T}\left|\hat{\theta}^{\varepsilon}_t-\theta^{\varepsilon}_t\right|^{2p-2}_H\left|j^{\varepsilon}_t-j^{\varepsilon}_{t(\delta)}\right|^2_H\left|\triangledown\theta^{\varepsilon}_t\right|^2_{H}dt\right)\\
	\leq &C_{p,T}\kappa\left(\frac{1}{\tau _{M,L}\wedge T}\mathbb{E}\int_0^{\tau _{M,L}\wedge T}\left|j^{\varepsilon}_t-j^{\varepsilon}_{t(\delta)}\right|^{2p}_Hdt\right)+C_{p,T,L}\mathbb{E}\int_0^{\tau _{M,L}\wedge T}\left|j^{\varepsilon}_t-j^{\varepsilon}_{t(\delta)}\right|^2_H\left|\triangledown\theta^{\varepsilon}_t\right|^2_{H}dt\\
	\leq &C_{p,T}\kappa\left(C_{T,M}\delta^{\frac{1}{2}}\right)+C_{p,T,L}\mathbb{E}\int_0^{\tau _{M,L}\wedge T}\left|j^{\varepsilon}_t-j^{\varepsilon}_{t(\delta)}\right|^2_H\left|\triangledown\theta^{\varepsilon}_t\right|^2_{H}dt.
\end{align*}
Next, we shall address the final term on the right-hand side of the above inequality. Specifically, due to
\begin{align*}
	\frac{d}{dt}\left|\theta^{\varepsilon}_t\right|^2_H+\frac{2}{\varepsilon}\left|\triangledown \theta^{\varepsilon}_t\right|^2_H=2\int_{\left|z\right|_H<1}\left(\theta^{\varepsilon}_t,\sigma_2(j^{\varepsilon}_t,\theta^{\varepsilon}_t,z)\right)\dot{\tilde{\eta}}_2(t,dz)+\int_{\left|z\right|_H<1}\left|\sigma_2(j^{\varepsilon}_t,\theta^{\varepsilon}_t,z)\right|^2_H\dot{\eta}_2(\frac{1}{\varepsilon}t,dz).
\end{align*}
It is easy to obtain that
\begin{align*}
	\left \langle \left|\theta^{\varepsilon}_t\right|^2_H \right \rangle _t=\int_0^t\int_{\left|z\right|_H<1}\left(2\left(\theta^{\varepsilon}_s,\sigma_2(j^{\varepsilon}_s,\theta^{\varepsilon}_s,z)\right)+\left|\sigma_2(j^{\varepsilon}_s,\theta^{\varepsilon}_s,z)\right|^2_H\right)^2\eta_2(\frac{1}{\varepsilon}ds,dz),
\end{align*}
where $\left \langle\cdot \right \rangle _t$ denotes the quadratic variation, and
\begin{align*}
	&\left|j^{\varepsilon}_t-j^{\varepsilon}_{t(\delta)}\right|^2_H\frac{d}{dt}\left|\theta^{\varepsilon}_t\right|^2_H+\frac{2}{\varepsilon}\left|j^{\varepsilon}_t-j^{\varepsilon}_{t(\delta)}\right|^2_H\left|\triangledown \theta^{\varepsilon}_t\right|^2_H\\
	=&2\int_{\left|z\right|_H<1}\left|j^{\varepsilon}_t-j^{\varepsilon}_{t(\delta)}\right|^2_H\left(\theta^{\varepsilon}_t,\sigma_2(j^{\varepsilon}_t,\theta^{\varepsilon}_t,z)\right)\dot{\tilde{\eta}}_2(t,dz)+\int_{\left|z\right|_H<1}\left|j^{\varepsilon}_t-j^{\varepsilon}_{t(\delta)}\right|^2_H\left|\sigma_2(j^{\varepsilon}_t,\theta^{\varepsilon}_t,z)\right|^2_H\dot{\eta}_2(\frac{1}{\varepsilon}t,dz).
\end{align*}
Therefore, by B-D-G's inequality, we have
\begin{align*}
	&\mathbb{E}\int_0^{\tau _{M,L}\wedge T}\left|j^{\varepsilon}_t-j^{\varepsilon}_{t(\delta)}\right|^2_H\left|\triangledown \theta^{\varepsilon}_t\right|^2_Hdt\\
	=&-\frac{\varepsilon}{2}\mathbb{E}\int_0^{\tau _{M,L}\wedge T}\left|j^{\varepsilon}_t-j^{\varepsilon}_{t(\delta)}\right|^2_Hd\left|\theta^{\varepsilon}_t\right|^2_H+\varepsilon\mathbb{E}\int_0^{\tau _{M,L}\wedge T}\int_{\left|z\right|_H<1}\left|j^{\varepsilon}_t-j^{\varepsilon}_{t(\delta)}\right|^2_H\left|\sigma_2(j^{\varepsilon}_t,\theta^{\varepsilon}_t,z)\right|^2_H\eta_2(\frac{1}{\varepsilon}dt,dz)\\
	\leq & C\varepsilon\mathbb{E}\left(\int_0^{\tau _{M,L}\wedge T}\int_{\left|z\right|_H<1}\left|j^{\varepsilon}_t-j^{\varepsilon}_{t(\delta)}\right|^4_H\left(2\left(\theta^{\varepsilon}_t,\sigma_2(j^{\varepsilon}_t,\theta^{\varepsilon}_t,z)\right)+\left|\sigma_2(j^{\varepsilon}_t,\theta^{\varepsilon}_t,z)\right|^2_H\right)^2\eta_2(\frac{1}{\varepsilon}dt,dz)\right)^{\frac{1}{2}}\\
	&+\mathbb{E}\int_0^{\tau _{M,L}\wedge T}\int_{\left|z\right|_H<1}\left|j^{\varepsilon}_t-j^{\varepsilon}_{t(\delta)}\right|^2_H\left|\sigma_2(j^{\varepsilon}_t,\theta^{\varepsilon}_t,z)\right|^2_H\nu_2(dz)dt\\
	\leq &C_{M,L}\mathbb{E}\int_0^{\tau _{M,L}\wedge T}\left|j^{\varepsilon}_t-j^{\varepsilon}_{t(\delta)}\right|^2_Hdt\\
	\leq & C_{T,M,L}\delta^{\frac{1}{2p}}.
\end{align*}
Hence
\begin{align*}
	\mathbb{E}\int_0^{\tau _{M,L}\wedge T}\left|\hat{\theta}^{\varepsilon}_t-\theta^{\varepsilon}_t\right|^{2p}_Hdt\leq C_{p,T}\kappa\left(C_{T,M}\delta^{\frac{1}{2}}\right)+C_{p,T,M,L}\delta^{\frac{1}{2p}}.
\end{align*}
\end{proof}
Furthermore, we can get the following Lemma:
\begin{lemma}\label{l7}
Let $\sigma _1(j,\theta,z)\equiv \sigma _1(j,z)$	, for any $T>0$, $\left ( j_0,\theta_0\right )\in H\times H$ and for some $p\geq 1$, we have
\begin{align}\label{e26}
\nonumber	&\mathbb{E}\sup_{t\in[0,\tau _{M,L}\wedge T]}\left|j^{\varepsilon}_t-\hat{j}^{\varepsilon}_t\right|^{2p}_H+\mathbb{E}\int_0^{\tau _{M,L}\wedge T}\left|j^{\varepsilon}_s-\hat{j}^{\varepsilon}_s\right|^{2p-2}_H\left|\triangledown\left(j^{\varepsilon}_s-\hat{j}^{\varepsilon}_s\right)\right|^2_Hds\\
	\leq &\mathcal{O}\left(\delta\right)+C_{p,T,M}\kappa\left(C_{T,M}\delta^{\frac{1}{2}}+\mathcal{O}\left(\delta\right)\right).
\end{align}
\end{lemma}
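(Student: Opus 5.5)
We estimate the difference $Z^{\varepsilon}_t:=j^{\varepsilon}_t-\hat{j}^{\varepsilon}_t$ directly by It\^{o}'s formula for $|Z^{\varepsilon}_t|^{2p}_H$, stopped at $\tau_{M,L}\wedge T$. Since $\sigma_1(j,\theta,z)\equiv\sigma_1(j,z)$ and both $j^{\varepsilon}$ and $\hat j^{\varepsilon}$ carry the same jump term $\int\sigma_1(j^{\varepsilon}_s,z)\tilde\eta_1(ds,dz)$, the noise cancels exactly in the equation for $Z^{\varepsilon}$. It\^{o}'s formula therefore reduces to the chain rule for a process of bounded variation plus the common jumps, and the compensator/martingale terms vanish. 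The equation for $Z^{\varepsilon}$ reads
\begin{align*}
dZ^{\varepsilon}_t+\left(u^{\varepsilon}_t\cdot\triangledown\right)Z^{\varepsilon}_t\,dt=\triangle Z^{\varepsilon}_t\,dt+\left(f(j^{\varepsilon}_t,\theta^{\varepsilon}_t)-f(j^{\varepsilon}_{t(\delta)},\hat\theta^{\varepsilon}_t)\right)dt+\partial_x\left(\theta^{\varepsilon}_t-\hat\theta^{\varepsilon}_t\right)dt .
\end{align*}
The convective term is linear in $Z^{\varepsilon}$ with the same divergence-free field $u^{\varepsilon}$, so $((u^{\varepsilon}\cdot\triangledown)Z^{\varepsilon},Z^{\varepsilon})=0$ and no nonlinear difficulty arises. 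This is the main reason for introducing $\hat j^{\varepsilon}$ with transport by $u^{\varepsilon}_s$ rather than by $\hat u^{\varepsilon}_s$.

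Pairing with $2p|Z^{\varepsilon}|^{2p-2}_HZ^{\varepsilon}$, the Laplacian gives the dissipative term $2p|Z^{\varepsilon}|^{2p-2}_H|\triangledown Z^{\varepsilon}|^2_H$ on the left. For the temperature term we integrate by parts, $(\partial_x(\theta^{\varepsilon}-\hat\theta^{\varepsilon}),Z^{\varepsilon})=-(\theta^{\varepsilon}-\hat\theta^{\varepsilon},\partial_xZ^{\varepsilon})$. Young's inequality then bounds this contribution by
\begin{align*}
\gamma|Z^{\varepsilon}|^{2p-2}_H|\triangledown Z^{\varepsilon}|^2_H+C_\gamma|Z^{\varepsilon}|^{2p-2}_H|\theta^{\varepsilon}-\hat\theta^{\varepsilon}|^2_H,
\end{align*}
and the last factor is further bounded by $C(|Z^{\varepsilon}|^{2p}_H+|\theta^{\varepsilon}-\hat\theta^{\varepsilon}|^{2p}_H)$. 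For the $f$-term, H\"older/Young and Assumption \ref{a1} give the bound $C|Z^{\varepsilon}|^{2p}_H+C\kappa\left(|j^{\varepsilon}_t-j^{\varepsilon}_{t(\delta)}|^{2p}_H+|\theta^{\varepsilon}_t-\hat\theta^{\varepsilon}_t|^{2p}_H\right)$. Choosing $\gamma$ small, integrating up to $t\wedge\tau_{M,L}$ and taking the supremum followed by expectation (the supremum costs nothing since there is no martingale term), we obtain
\begin{align*}
\mathbb{E}\sup_{s\le t\wedge\tau_{M,L}}|Z^{\varepsilon}_s|^{2p}_H+\mathbb{E}\int_0^{t\wedge\tau_{M,L}}|Z^{\varepsilon}|^{2p-2}_H|\triangledown Z^{\varepsilon}|^2_Hds\le C\int_0^{t}\mathbb{E}\sup_{r\le s\wedge\tau_{M,L}}|Z^{\varepsilon}_r|^{2p}_Hds+R,
\end{align*}
where $R$ collects the source terms.

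In $R$, the term $C\,\mathbb{E}\int_0^{\tau_{M,L}\wedge T}|\theta^{\varepsilon}-\hat\theta^{\varepsilon}|^{2p}_Hds$ is bounded by $\mathcal{O}(\delta)$ using Lemma \ref{l6}. For the $\kappa$-term we use Jensen's inequality with the concavity of $\kappa$ to pull the time integral and the expectation inside, which yields
\begin{align*}
C_T\,\kappa\left(\tfrac{1}{T}\mathbb{E}\int_0^{\tau_{M,L}\wedge T}|j^{\varepsilon}_s-j^{\varepsilon}_{s(\delta)}|^{2p}_Hds+\tfrac{1}{T}\mathbb{E}\int_0^{\tau_{M,L}\wedge T}|\theta^{\varepsilon}-\hat\theta^{\varepsilon}|^{2p}_Hds\right).
\end{align*}
By (\ref{e8.5.5}) of Lemma \ref{l1}, with $s(\delta)$ in place of $s-\delta$ and the same proof since $s-s(\delta)\le\delta$, the first integral is $\le C_{T,M}\delta^{1/2}$; the second is again $\le\mathcal{O}(\delta)$ by Lemma \ref{l6}. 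Monotonicity of $\kappa$ then gives the term $C_{p,T,M}\kappa(C_{T,M}\delta^{1/2}+\mathcal{O}(\delta))$. Finally, Gronwall's inequality absorbs the linear term $C\int\mathbb{E}\sup|Z^{\varepsilon}|^{2p}$ and yields (\ref{e26}).

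The main obstacle we anticipate is justifying the Jensen step rigorously in the presence of the random upper limit $\tau_{M,L}\wedge T$. We would handle it by extending the integrands by zero to $[0,T]$ and applying Jensen with respect to the normalized measure $\frac{1}{T}ds\otimes\mathbb{P}$, using $\kappa(0)=0$ and concavity. We would also need to check that Lemma \ref{l1} applies to the grid point $s(\delta)$ rather than the shifted time $s-\delta$; the argument there only uses that the gap is at most $\delta$, so it transfers verbatim.
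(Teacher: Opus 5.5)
Your proposal is correct and follows the paper's proof essentially step by step: the common jump term cancels, $((u^{\varepsilon}\cdot\triangledown)Z^{\varepsilon},Z^{\varepsilon})=0$, the temperature term is handled by integration by parts plus Young, the $f$-term by Assumption \ref{a1} plus Jensen, and you close with Lemma \ref{l6}, (\ref{e8.5.5}) and Gronwall; your zero-extension form of the Jensen step is cleaner than the paper's normalization by the random time $\tau_{M,L}\wedge T$. One caveat on your side remark: (\ref{e8.5.5}) does not transfer verbatim to $s(\delta)$, because the term $J_2$ in its proof contains $|\triangledown j^{\varepsilon}_{t-\delta}|^2_H$, which (\ref{e4}) controls only after the substitution $r=t-\delta$, whereas $|\triangledown j^{\varepsilon}_{t(\delta)}|^2_H$ samples $\triangledown j^{\varepsilon}$ at the grid points $k\delta$ and so needs extra pointwise-in-time regularity (e.g.\ the $H^1$ bound of Theorem \ref{th2}, available only for $j_0\in H^1$) --- but the paper invokes (\ref{e8.5.5}) with $s(\delta)$ in exactly the same unexplained way, so this is inherited from the paper rather than a deviation from it.
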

\begin{proof}
	Due to 
\begin{align*}
	\left(j^{\varepsilon}_t-\hat{j}^{\varepsilon}_t\right)&+\int_0^t\left(u^{\varepsilon}_s\cdot \triangledown\right)\left(j^{\varepsilon}_s-\hat{j}^{\varepsilon}_s\right)=\int_0^t\triangle\left(j^{\varepsilon}_s-\hat{j}^{\varepsilon}_s\right)ds\\
	&+\int_0^tf(j^{\varepsilon}_s,\theta^{\varepsilon}_s)-f(j^{\varepsilon}_{s(\delta)},\hat{\theta}^{\varepsilon}_s)ds+\int_0^t\partial_x\theta^{\varepsilon}_s-\partial_x\hat{\theta}^{\varepsilon}_sds,
\end{align*}
we have
\begin{align*}
	\left|j^{\varepsilon}_t-\hat{j}^{\varepsilon}_t\right|^{2p}_H=&2p\int_0^t\left|j^{\varepsilon}_s-\hat{j}^{\varepsilon}_s\right|^{2p-2}_H\left \langle\triangle\left(j^{\varepsilon}_s-\hat{j}^{\varepsilon}_s\right),j^{\varepsilon}_s-\hat{j}^{\varepsilon}_s  \right \rangle ds\\
	&+2p\int_0^t\left|j^{\varepsilon}_s-\hat{j}^{\varepsilon}_s\right|^{2p-2}_H\left(f(j^{\varepsilon}_s,\theta^{\varepsilon}_s)-f(j^{\varepsilon}_{s(\delta)},\hat{\theta}^{\varepsilon}_s),j^{\varepsilon}_s-\hat{j}^{\varepsilon}_s\right)ds\\
	&+2p\int_0^t\left|j^{\varepsilon}_s-\hat{j}^{\varepsilon}_s\right|^{2p-2}_H\left(\partial_x\theta^{\varepsilon}_s-\partial_x\hat{\theta}^{\varepsilon}_s ,j^{\varepsilon}_s-\hat{j}^{\varepsilon}_s\right)ds,
\end{align*}
and
\begin{align*}
	&\left|j^{\varepsilon}_t-\hat{j}^{\varepsilon}_t\right|^{2p}_H+2p\int_0^t\left|j^{\varepsilon}_s-\hat{j}^{\varepsilon}_s\right|^{2p-2}_H\left|\triangledown\left(j^{\varepsilon}_s-\hat{j}^{\varepsilon}_s\right)\right|^2_Hds\\
	=&2p\int_0^t\left|j^{\varepsilon}_s-\hat{j}^{\varepsilon}_s\right|^{2p-2}_H\left(f(j^{\varepsilon}_s,\theta^{\varepsilon}_s)-f(j^{\varepsilon}_{s(\delta)},\hat{\theta}^{\varepsilon}_s),j^{\varepsilon}_s-\hat{j}^{\varepsilon}_s\right)ds\\
	&-2p\int_0^t\left|j^{\varepsilon}_s-\hat{j}^{\varepsilon}_s\right|^{2p-2}_H\left(\theta^{\varepsilon}_s-\hat{\theta}^{\varepsilon}_s ,\partial_xj^{\varepsilon}_s-\partial_x\hat{j}^{\varepsilon}_s\right)ds.
\end{align*}
Therefore, using Young's inequality, we have
\begin{align*}
	&\mathbb{E}\sup_{t\in[0,\tau _{M,L}\wedge T]}\left|j^{\varepsilon}_t-\hat{j}^{\varepsilon}_t\right|^{2p}_H+2p\mathbb{E}\int_0^{\tau _{M,L}\wedge T}\left|j^{\varepsilon}_s-\hat{j}^{\varepsilon}_s\right|^{2p-2}_H\left|\triangledown\left(j^{\varepsilon}_s-\hat{j}^{\varepsilon}_s\right)\right|^2_Hds\\
	\leq & C_p\mathbb{E}\int_0^{\tau _{M,L}\wedge T}\left|j^{\varepsilon}_s-\hat{j}^{\varepsilon}_s\right|^{2p}_Hds+C_p\mathbb{E}\int_0^{\tau _{M,L}\wedge T}\left|f(j^{\varepsilon}_s,\theta^{\varepsilon}_s)-f(j^{\varepsilon}_{s(\delta)},\hat{\theta}^{\varepsilon}_s)\right|^{2p}_{H}ds\\
	&+p\mathbb{E}\int_0^{\tau _{M,L}\wedge T}\left|j^{\varepsilon}_s-\hat{j}^{\varepsilon}_s\right|^{2p-2}_H\left|\triangledown\left(j^{\varepsilon}_s-\hat{j}^{\varepsilon}_s\right)\right|^2_Hds+C_p\mathbb{E}\int_0^{\tau _{M,L}\wedge T}\left|\theta^{\varepsilon}_s-\hat{\theta}^{\varepsilon}_s\right|^{2p}_Hds.
\end{align*}
According to (\ref{e8.5.5}) and (\ref{e25}), we have 
\begin{align*}
	&\mathbb{E}\sup_{t\in[0,\tau _{M,L}\wedge T]}\left|j^{\varepsilon}_t-\hat{j}^{\varepsilon}_t\right|^{2p}_H+p\mathbb{E}\int_0^{\tau _{M,L}\wedge T}\left|j^{\varepsilon}_s-\hat{j}^{\varepsilon}_s\right|^{2p-2}_H\left|\triangledown\left(j^{\varepsilon}_s-\hat{j}^{\varepsilon}_s\right)\right|^2_Hds\\
	\le & C_p\mathbb{E}\int_0^{\tau _{M,L}\wedge T}\left|j^{\varepsilon}_s-\hat{j}^{\varepsilon}_s\right|^{2p}_Hds+C_p\mathbb{E}\int_0^{\tau _{M,L}\wedge T}\left|\theta^{\varepsilon}_s-\hat{\theta}^{\varepsilon}_s\right|^{2p}_Hds\\
	&+C_p\mathbb{E}\int_0^{\tau _{M,L}\wedge T}\kappa\left(\left|j^{\varepsilon}_s-j^{\varepsilon}_{s(\delta)}\right|^{2p}_H+\left|\theta^{\varepsilon}_s-\hat{\theta}^{\varepsilon}_s\right|^{2p}_H\right)ds\\
	\leq &C_p\int_0^{\tau _{M,L}\wedge T}\mathbb{E}\sup_{r\in[0,\tau _{M,L}\wedge s]}\left|j^{\varepsilon}_r-\hat{j}^{\varepsilon}_r\right|^{2p}_Hds+\mathcal{O}_{p,T,M,L}\left(\delta\right)\\
	&+ C_{p,T}\kappa\left(\frac{1}{\tau _{M,L}\wedge T}\int_0^{\tau _{M,L}\wedge T}\left|j^{\varepsilon}_s-j^{\varepsilon}_{s(\delta)}\right|^{2p}_H+\left|\theta^{\varepsilon}_s-\hat{\theta}^{\varepsilon}_s\right|^{2p}_Hds\right)\\
	\leq &C_p\int_0^{\tau _{M,L}\wedge T}\mathbb{E}\sup_{r\in[0,\tau _{M,L}\wedge s]}\left|j^{\varepsilon}_r-\hat{j}^{\varepsilon}_r\right|^{2p}_Hds+\mathcal{O}\left(\delta\right)+C_{p,T,M}\kappa\left(C_{T,M}\delta^{\frac{1}{2}}+\mathcal{O}\left(\delta\right)\right).
\end{align*}
Gronwall's inequality implies that 
\begin{align*}
	&\mathbb{E}\sup_{t\in[0,\tau _{M,L}\wedge T]}\left|j^{\varepsilon}_t-\hat{j}^{\varepsilon}_t\right|^{2p}_H+\mathbb{E}\int_0^{\tau _{M,L}\wedge T}\left|j^{\varepsilon}_s-\hat{j}^{\varepsilon}_s\right|^{2p-2}_H\left|\triangledown\left(j^{\varepsilon}_s-\hat{j}^{\varepsilon}_s\right)\right|^2_Hds\\
	\leq & \mathcal{O}\left(\delta\right)+C_{p,T,M}\kappa\left(C_{T,M}\delta^{\frac{1}{2}}+\mathcal{O}\left(\delta\right)\right).
\end{align*}
\end{proof}

Next, we present the estimate of $\hat{j}^{\varepsilon}_t-\bar{j}_t$, which is the following Lemma:
\begin{lemma}\label{l8}
Let $\sigma _1(j,\theta,z)\equiv \sigma _1(j,z)$	, for any $T>0$, $\left ( j_0,\theta_0\right )\in H\times H$ and for some $p\geq 1$, we have
\begin{align}\label{e27}
	\mathbb{E}\sup_{t\in[0,\tau_{M,L,N}\wedge T]}\left|\hat{j}^{\varepsilon}_t-\bar{j}_t\right|^{2p}_H\leq \hat{\Omega}^{-1}\left(\hat{\Omega}\left(\mathcal{R}(\varepsilon,\delta)\right)+C_{p,T,N}T\right),
\end{align}
where $\tau _{M,L,N}:=\tau_{M,L} \wedge \inf\left\{t\geq 0; \left|\hat{j}^{\varepsilon}_t\right|_H+\left|\bar{j}_t\right|_H>N\right\}$. The definitions of $\mathcal{R}(\varepsilon,\delta)$ and $\Omega(\cdot)$ are given in (\ref{e28}) and (\ref{e29}).
\end{lemma}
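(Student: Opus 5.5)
The plan is to subtract the averaged equation (\ref{e15}) from the equation defining $\hat{j}^{\varepsilon}$, apply It\^{o}'s formula to $\left|\hat{j}^{\varepsilon}_t-\bar{j}_t\right|^{2p}_H$ up to $\tau_{M,L,N}\wedge T$, and close the estimate with a Bihari-type (non-Lipschitz Gronwall) inequality. The function $\hat{\Omega}(x):=\int_1^x\frac{dr}{\hat{\kappa}(r)}$ is built from a concave modulus $\hat{\kappa}$ dominating $r+\tilde{\kappa}(r)+\kappa(r)$, as in Proposition \ref{p1}. Since $\sigma_1$ does not depend on $\theta$, the jump terms of both equations are $\sigma_1(j^{\varepsilon}_s,z)$ and $\sigma_1(\bar{j}_s,z)$. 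Their difference is controlled by Assumption \ref{a2} through $\kappa\left(\left|j^{\varepsilon}-\hat{j}^{\varepsilon}\right|^{2p}_H+\left|\hat{j}^{\varepsilon}-\bar{j}\right|^{2p}_H\right)$. The first argument is already small by Lemma \ref{l7}, and the stochastic integral is handled by B-D-G as in Theorem \ref{th1}.

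Write $Z_t:=\hat{j}^{\varepsilon}_t-\bar{j}_t$. For the convection, I would write $(u^{\varepsilon}\cdot\triangledown)\hat{j}^{\varepsilon}-(\bar{u}\cdot\triangledown)\bar{j}=(u^{\varepsilon}\cdot\triangledown)Z+((u^{\varepsilon}-\bar{u})\cdot\triangledown)\bar{j}$. The first term is orthogonal to $Z$. The second I bound using (\ref{e8.5}) and $\left|u^{\varepsilon}-\bar{u}\right|_{H^1}\le C\left(\left|j^{\varepsilon}-\hat{j}^{\varepsilon}\right|_H+\left|Z\right|_H\right)$. Before $\tau_{M,L,N}$ all $H$-norms are bounded by $M$ and $N$, so Young's inequality absorbs $\left|\triangledown Z\right|^2_H$ into the dissipation. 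This leaves $C_N\left|Z\right|^{2p}_H(1+\left|\triangledown\bar{j}\right|^2_H)$ plus terms controlled by Lemma \ref{l7}.

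The drift terms $f(j^{\varepsilon}_{s(\delta)},\hat{\theta}^{\varepsilon}_s)-\bar{f}(\bar{j}_s,\bar{u}_s)$ and $\partial_x\hat{\theta}^{\varepsilon}_s-\partial_x g(\bar{u}_s)$ I would split into three pieces:
\begin{itemize}
\item a fluctuation part $f(j^{\varepsilon}_{s(\delta)},\hat{\theta}^{\varepsilon}_s)-\bar{f}(j^{\varepsilon}_{s(\delta)},u^{\varepsilon}_{s(\delta)})$, and similarly $\hat{\theta}^{\varepsilon}_s-g(u^{\varepsilon}_{s(\delta)})$;
\item a discretisation part $\bar{f}(j^{\varepsilon}_{s(\delta)},u^{\varepsilon}_{s(\delta)})-\bar{f}(j^{\varepsilon}_s,u^{\varepsilon}_s)$, bounded via Proposition \ref{p1} and (\ref{e8.5.5});
\item a comparison part $\bar{f}(j^{\varepsilon}_s,u^{\varepsilon}_s)-\bar{f}(\bar{j}_s,\bar{u}_s)$, which gives $\tilde{\kappa}\left(\left|j^{\varepsilon}-\hat{j}^{\varepsilon}\right|^{2p}_H+\left|Z\right|^{2p}_H\right)$ through Proposition \ref{p1}.
\end{itemize}
Collecting the $\delta$- and $\mathcal{O}(\delta)$-terms from Lemma \ref{l7}, the Jensen step for the concave $\kappa$ (as in Lemma \ref{l6}), and the fluctuation contribution gives the remainder $\mathcal{R}(\varepsilon,\delta)$.

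The main obstacle is the fluctuation part. On each interval $[k\delta,(k+1)\delta)$, $\hat{\theta}^{\varepsilon}_s$ is in law the frozen process $\tilde{\theta}^{u^{\varepsilon}_{k\delta},\hat{\theta}^{\varepsilon}_{k\delta}}_{(s-k\delta)/\varepsilon}$, with frozen parameter independent of the future noise increments. I would use the Khasminskii trick: bound $\mathbb{E}\left|\int_{k\delta}^{(k+1)\delta}(\cdots)ds\right|^2$ by a double integral. Using the Markov property, (\ref{e11}), (\ref{e13}) and (\ref{e14}), each correlation decays like $e^{-\lambda_p(r-s)/(p\varepsilon)}$. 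Summing over the $T/\delta$ blocks should give a contribution of order $C_{M,L}\,\varepsilon/\delta$, or $\kappa(\varepsilon/\delta)$ for the $f$ term. For $f$, which is only Lipschitz-type in $\theta$ through $\kappa$, I would first try passing through the mixing bound applied with the modulus $\kappa$, as done for $P_1$ in Lemma \ref{l5}.

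With this, $\mathbb{E}\sup_{r\le t}\left|Z_r\right|^{2p}_H\le\mathcal{R}(\varepsilon,\delta)+C_{p,T,N}\int_0^t\hat{\kappa}\left(\mathbb{E}\sup_{r\le s}\left|Z_r\right|^{2p}_H\right)ds$. Bihari's inequality then yields (\ref{e27}). The factor $\left|\triangledown\bar{j}\right|^2_H$ would be integrated into a random time change, or removed by also stopping when $\int_0^t\left|\triangledown\bar{j}\right|^2_H$ exceeds $N$, which I would fold into $\tau_{M,L,N}$ using (\ref{e19}).
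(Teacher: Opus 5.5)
Your overall architecture matches the paper's: It\^{o}'s formula for $|Z_t|^{2p}_H$ with $Z=\hat{j}^{\varepsilon}-\bar{j}$, the fluctuation/discretisation/comparison split controlled by Proposition \ref{p1} and Lemma \ref{l7}, then B-D-G and Bihari. Your convection splitting is fine. So is your description of $\hat{\theta}^{\varepsilon}$ on each block, restarted from $\hat{\theta}^{\varepsilon}_{k\delta}$ and run for time $(s-k\delta)/\varepsilon$; this is more accurate than the paper's $\tilde{\theta}^{u^{\varepsilon}_{k\delta},\theta_0}_{(r+k\delta)/\varepsilon}$.

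The gap is exactly at the step you flag as the main obstacle. The Khasminskii double integral controls $\mathbb{E}\bigl|\int_{k\delta}^{(k+1)\delta}F_s\,ds\bigr|^2_H$, where $F_s=f(j^{\varepsilon}_{s(\delta)},\hat{\theta}^{\varepsilon}_s)-\bar{f}(j^{\varepsilon}_{s(\delta)},u^{\varepsilon}_{s(\delta)})$. The It\^{o} identity for $|Z|^{2p}_H$ does not contain that quantity. With $G_s=\hat{\theta}^{\varepsilon}_s-g(u^{\varepsilon}_{s(\delta)})$, the fluctuations enter only as $2p\int_0^t|Z_s|^{2p-2}_H(F_s,Z_s)\,ds$ and $-2p\int_0^t|Z_s|^{2p-2}_H(G_s,\partial_xZ_s)\,ds$. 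To use cancellation you must first freeze the test function at $s(\delta)$; it is then $\mathscr{F}_{k\delta}$-measurable and your conditional mixing argument applies. You then have to control the freezing error:
\begin{itemize}
\item For the $f$-term this needs increment bounds like (\ref{e8.5.5}) for $\hat{j}^{\varepsilon}$ and $\bar{j}$.
\item For the buoyancy term it needs increments of $\partial_xZ$ in $H$. Nothing available for $j_0\in H$ provides these; only $\int|\triangledown Z|^2_H$ is controlled.
\end{itemize}
An extra device is required, for instance a Fourier cut-off $P_n$. On the low modes write $(P_nG_s,\partial_xZ_s)=-(\partial_xP_nG_s,Z_s)$, where $\theta\mapsto\partial_xP_n\theta$ is $n$-Lipschitz, and freeze $Z$. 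The high modes $(I-P_n)\hat{\theta}^{\varepsilon}_s$ and $(I-P_n)g(u^{\varepsilon}_{s(\delta)})$ are $O(n^{-1})$ in $L^2(dt\otimes d\mathbb{P})$. This follows from the gradient bound in (\ref{e24}) and from $\int_H|\triangledown\theta|^2_H\pi^u(d\theta)\le C(1+|j|^2_H)$, which comes from the energy identity for (\ref{e9}) in stationarity. These high-mode terms are absorbed by the dissipation. Then let $n\to\infty$ suitably with $\varepsilon,\delta$. Without such a step, the fluctuation contribution to $\mathcal{R}(\varepsilon,\delta)$ is not established.

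The paper handles the fluctuation differently. It applies Young's inequality first, so the fluctuations appear as $\int_0^t|F_s|^{2p}_H\,ds$ and $\int_0^t|\hat{\theta}^{\varepsilon}_s-g(\bar{u}_s)|^{2p}_H\,ds$ ($Q_3$, $Q_4$). It then bounds $Q_{3,1}$ pointwise in time via (\ref{e11}) and (\ref{e13}). Your correlation argument says nothing about such quantities, and you should not fall back on that route. The paper replaces $\int_H|f(j,\tilde{\theta}^{u,\theta_0}_t)-f(j,\theta)|^{2p}_H\pi^u(d\theta)$ by $\int_H\mathbb{E}|f(j,\tilde{\theta}^{u,\theta_0}_t)-f(j,\tilde{\theta}^{u,\theta}_t)|^{2p}_H\pi^u(d\theta)$. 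Invariance does not justify this: it concerns $\int_H\mathbb{E}\varphi(\tilde{\theta}^{u,\theta}_t)\pi^u(d\theta)$ for a fixed $\varphi$, not the synchronous coupling with $\tilde{\theta}^{u,\theta_0}_t$. For example, with additive $\sigma_2$ and $f(j,\theta)=\theta$, $\mathbb{E}|F_s|^{2p}_H$ stays close to $\int_H|\theta-g(u)|^{2p}_H\pi^u(d\theta)$ on most of each block. That quantity is positive unless $\pi^u$ is a Dirac mass. So keeping the fluctuation linear, as you intend, is the right idea, but it has to be completed as above.

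A smaller point concerns the factor $|\triangledown\bar{j}|^2_H$. Stopping when $\int_0^t|\triangledown\bar{j}_s|^2_H\,ds>N$ does not by itself turn $\mathbb{E}\int_0^t|Z_s|^{2p}_H|\triangledown\bar{j}_s|^2_H\,ds$ into $C\int_0^t\mathbb{E}\sup_{r\le s}|Z_r|^{2p}_H\,ds$. Instead, apply It\^{o}'s formula to $e^{-C\Phi_t}|Z_t|^{2p}_H$ with $\Phi_t=\int_0^t(1+|\triangledown\bar{j}_s|^2_H)\,ds$. Use $e^{-a}\kappa(x)\le\kappa(e^{-a}x)$, which holds by concavity and $\kappa(0)=0$ (likewise for $\hat{\kappa}$), to run Bihari on the weighted quantity. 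You pay a factor $e^{C(T+N)}$ at the end. This weight and the extra stopping condition change the form of (\ref{e27}), which is harmless for Theorem \ref{th7}. The paper treats this term ($Q_2$) through It\^{o}'s formula for $|\bar{j}|^2_H$ instead.
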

\begin{proof}
By It\^{o}'s formula, we have
\begin{align*}
&\left|\hat{j}^{\varepsilon}_t-\bar{j}_t\right|^{2p}_H+2p\int_0^t\left|\hat{j}^{\varepsilon}_s-\bar{j}_s\right|^{2p-2}_H\left(\left(u^{\varepsilon}_s\cdot \triangledown\right)\hat{j}^{\varepsilon}_s-\left(\bar{u}_s\cdot \triangledown\right)\bar{j}_s,\hat{j}^{\varepsilon}_s-\bar{j}_s\right)ds\\
=& 2p\int_0^t\left|\hat{j}^{\varepsilon}_s-\bar{j}_s\right|^{2p-2}_H\left \langle \triangle\left(\hat{j}^{\varepsilon}_s-\bar{j}_s\right),\hat{j}^{\varepsilon}_s-\bar{j}_s \right \rangle ds\\
&+2p\int_0^t\left|\hat{j}^{\varepsilon}_s-\bar{j}_s\right|^{2p-2}_H\left(f(j^{\varepsilon}_{s(\delta)},\hat{\theta}^{\varepsilon}_s)-\bar{f}(\bar{j}_s,\bar{u}_s),\hat{j}^{\varepsilon}_s-\bar{j}_s\right)ds\\
&+2p\int_0^t\left|\hat{j}^{\varepsilon}_s-\bar{j}_s\right|^{2p-2}_H\left(\partial_x\hat{\theta}^{\varepsilon}_s-\partial_xg(\bar{u}_s) ,\hat{j}^{\varepsilon}_s-\bar{j}_s\right)ds\\
&+2p\int_0^t\int_{\left|z\right|_H<1}\left|\hat{j}^{\varepsilon}_s-\bar{j}_s\right|^{2p-2}_H\left(\sigma_1(j^{\varepsilon}_s,z)-\sigma_1(\bar{j}_s,z),\hat{j}^{\varepsilon}_s-\bar{j}_s\right)\tilde{\eta}_1(ds,dz)\\
&+\int_0^t\int_{\left|z\right|_H<1}\left[\left|(\hat{j}^{\varepsilon}_s-\bar{j}_s)+(\sigma_1(j^{\varepsilon}_s,z)-\sigma_1(\bar{j}_s,z))\right|^{2p}_H-\left|\hat{j}^{\varepsilon}_s-\bar{j}_s\right|^{2p}_H\right.\\
&\left.-2p\left|\hat{j}^{\varepsilon}_s-\bar{j}_s\right|^{2p-2}_H\left(\sigma_1(j^{\varepsilon}_s,z)-\sigma_1(\bar{j}_s,z),\hat{j}^{\varepsilon}_s-\bar{j}_s\right)\right]\eta_1(ds,dz).
\end{align*}
Therefore
\begin{align*}
	&\left|\hat{j}^{\varepsilon}_t-\bar{j}_t\right|^{2p}_H+2p\int_0^t\left|\hat{j}^{\varepsilon}_s-\bar{j}_s\right|^{2p-2}_H\left|\triangledown(\hat{j}^{\varepsilon}_s-\bar{j}_s)\right|^2_Hds\\
	\leq &2p\int_0^t\left|\hat{j}^{\varepsilon}_s-\bar{j}_s\right|^{2p-2}_H\left(\left(\left(u^{\varepsilon}_s-\hat{u}^{\varepsilon}_s\right)\cdot \triangledown\right)\left(\hat{j}^{\varepsilon}_s-\bar{j}_s\right),\bar{j}_s\right)ds\\
	&+2p\int_0^t\left|\hat{j}^{\varepsilon}_s-\bar{j}_s\right|^{2p-2}_H\left(\left(\left(\hat{u}^{\varepsilon}_s-\bar{u}_s\right)\cdot \triangledown\right)\left(\hat{j}^{\varepsilon}_s-\bar{j}_s\right),\bar{j}_s\right)ds\\
	&+C_p\int_0^t\left|\hat{j}^{\varepsilon}_s-\bar{j}_s\right|^{2p}_Hds+C_p\int_0^t\left|f(j^{\varepsilon}_{s(\delta)},\hat{\theta}^{\varepsilon}_s)-\bar{f}(\bar{j}_s,\bar{u}_s)\right|^{2p}_Hds\\
	&+p\int_0^t\left|\hat{j}^{\varepsilon}_s-\bar{j}_s\right|^{2p-2}_H\left|\triangledown(\hat{j}^{\varepsilon}_s-\bar{j}_s)\right|^2_Hds+C_p\int_0^t\left|\hat{\theta}^{\varepsilon}_s-g(\bar{u}_s) \right|^{2p}_Hds\\
	&+2p\int_0^t\int_{\left|z\right|_H<1}\left|\hat{j}^{\varepsilon}_s-\bar{j}_s\right|^{2p-2}_H\left(\sigma_1(j^{\varepsilon}_s,z)-\sigma_1(\bar{j}_s,z),\hat{j}^{\varepsilon}_s-\bar{j}_s\right)\tilde{\eta}_1(ds,dz)\\
	&+M_p\int_0^t\int_{\left|z\right|_H<1}\left(\left|\hat{j}^{\varepsilon}_s-\bar{j}_s\right|^{2p-2}_H\left|\sigma_1(j^{\varepsilon}_s,z)-\sigma_1(\bar{j}_s,z)\right|^2_H+\left|\hat{j}^{\varepsilon}_s-\bar{j}_s\right|^{2p}_H\right)\eta_1(ds,dz).
\end{align*}
According to (\ref{e8.5}) and Young's inequality, we have
\begin{align*}
	&\left|\hat{j}^{\varepsilon}_t-\bar{j}_t\right|^{2p}_H+p\int_0^t\left|\hat{j}^{\varepsilon}_s-\bar{j}_s\right|^{2p-2}_H\left|\triangledown(\hat{j}^{\varepsilon}_s-\bar{j}_s)\right|^2_Hds\\
	\leq & 2p\int_0^t\left|\hat{j}^{\varepsilon}_s-\bar{j}_s\right|^{2p-2}_H\left|j^{\varepsilon}_s-\hat{j}^{\varepsilon}_s\right|_{H^1}\left|\hat{j}^{\varepsilon}_s-\bar{j}_s\right|_{H^1}\left|\bar{j}_s\right|_Hds\\
	&+2p\int_0^t\left|\hat{j}^{\varepsilon}_s-\bar{j}_s\right|^{2p-2}_H\left|\hat{j}^{\varepsilon}_s-\bar{j}_s\right|_H\left|\hat{j}^{\varepsilon}_s-\bar{j}_s\right|_{H^1}\left|\bar{j}_s\right|_{H^1}ds\\
	&+C_p\int_0^t\left|\hat{j}^{\varepsilon}_s-\bar{j}_s\right|^{2p}_Hds+C_p\int_0^t\left|f(j^{\varepsilon}_{s(\delta)},\hat{\theta}^{\varepsilon}_s)-\bar{f}(\bar{j}_s,\bar{u}_s)\right|^{2p}_Hds+C_p\int_0^t\left|\hat{\theta}^{\varepsilon}_s-g(\bar{u}_s) \right|^{2p}_Hds\\
	&+2p\int_0^t\int_{\left|z\right|_H<1}\left|\hat{j}^{\varepsilon}_s-\bar{j}_s\right|^{2p-2}_H\left(\sigma_1(j^{\varepsilon}_s,z)-\sigma_1(\bar{j}_s,z),\hat{j}^{\varepsilon}_s-\bar{j}_s\right)\tilde{\eta}_1(ds,dz)\\
	&+C_p\int_0^t\int_{\left|z\right|_H<1}\left|\sigma_1(j^{\varepsilon}_s,z)-\sigma_1(\bar{j}_s,z)\right|^{2p}_H\eta_1(ds,dz)\\
	\leq &2p\gamma\int_0^t\left|\hat{j}^{\varepsilon}_s-\bar{j}_s\right|^{2p-2}_H\left|\hat{j}^{\varepsilon}_s-\bar{j}_s\right|^2_{H^1}ds+C_{p,\gamma}\int_0^t\left|\hat{j}^{\varepsilon}_s-\bar{j}_s\right|^{2p-2}_H\left|j^{\varepsilon}_s-\hat{j}^{\varepsilon}_s\right|^2_{H^1}\left|\bar{j}_s\right|^2_Hds\\
	&+C_{p,\gamma}\int_0^t\left|\hat{j}^{\varepsilon}_s-\bar{j}_s\right|^{2p}_H\left|\bar{j}_s\right|^2_{H^1}ds+C_p\int_0^t\left|\hat{j}^{\varepsilon}_s-\bar{j}_s\right|^{2p}_Hds\\
	&+C_p\int_0^t\left|f(j^{\varepsilon}_{s(\delta)},\hat{\theta}^{\varepsilon}_s)-\bar{f}(\bar{j}_s,\bar{u}_s)\right|^{2p}_Hds+C_p\int_0^t\left|\hat{\theta}^{\varepsilon}_s-g(\bar{u}_s) \right|^{2p}_Hds\\
	&+2p\int_0^t\int_{\left|z\right|_H<1}\left|\hat{j}^{\varepsilon}_s-\bar{j}_s\right|^{2p-2}_H\left(\sigma_1(j^{\varepsilon}_s,z)-\sigma_1(\bar{j}_s,z),\hat{j}^{\varepsilon}_s-\bar{j}_s\right)\tilde{\eta}_1(ds,dz)\\
	&+C_p\int_0^t\int_{\left|z\right|_H<1}\left|\sigma_1(j^{\varepsilon}_s,z)-\sigma_1(\bar{j}_s,z)\right|^{2p}_H\eta_1(ds,dz).
\end{align*}
Choose $\gamma$ small enough, we get
\begin{align*}
	\left|\hat{j}^{\varepsilon}_t-\bar{j}_t\right|^{2p}_H\leq & C_p\int_0^t\left|\hat{j}^{\varepsilon}_s-\bar{j}_s\right|^{2p}_Hds+C_p\left\{\int_0^t\left|\hat{j}^{\varepsilon}_s-\bar{j}_s\right|^{2p-2}_H\left|j^{\varepsilon}_s-\hat{j}^{\varepsilon}_s\right|^2_{H^1}\left|\bar{j}_s\right|^2_Hds\right\}_{Q_1}\\
	&+C_p\left\{\int_0^t\left|\hat{j}^{\varepsilon}_s-\bar{j}_s\right|^{2p}_H\left|\bar{j}_s\right|^2_{H^1}ds\right\}_{Q_2}+C_p\left\{\int_0^t\left|f(j^{\varepsilon}_{s(\delta)},\hat{\theta}^{\varepsilon}_s)-\bar{f}(\bar{j}_s,\bar{u}_s)\right|^{2p}_Hds\right\}_{Q_3}\\
	&+C_p\left\{\int_0^t\left|\hat{\theta}^{\varepsilon}_s-g(\bar{u}_s) \right|^{2p}_Hds\right\}_{Q_4}+C_p\left\{\int_0^t\int_{\left|z\right|_H<1}\left|\sigma_1(j^{\varepsilon}_s,z)-\sigma_1(\bar{j}_s,z)\right|^{2p}_H\eta_1(ds,dz)\right\}_{Q_5}\\
	&+2p\left\{\int_0^t\int_{\left|z\right|_H<1}\left|\hat{j}^{\varepsilon}_s-\bar{j}_s\right|^{2p-2}_H\left(\sigma_1(j^{\varepsilon}_s,z)-\sigma_1(\bar{j}_s,z),\hat{j}^{\varepsilon}_s-\bar{j}_s\right)\tilde{\eta}_1(ds,dz)\right\}_{Q_6}.
\end{align*}

We will discuss $Q_1$ to $Q_{6}$ one by one. Specifically, for $Q_1$, according to (\ref{e26}), we have
\begin{align*}
 \mathbb{E}\sup_{t\in[0,\tau_{M,L,N}\wedge T]}Q_1\leq C_{p,T,N}\mathcal{O}\left(\delta\right)+C_{p,T,M,N}\kappa\left(C_{T,M}\delta^{\frac{1}{2}}+\mathcal{O}\left(\delta\right)\right).
\end{align*}
For $Q_2$, note that
\begin{align*}
\frac{d}{dt}\left|\bar{j}_t\right|^2_H+2\left|\triangledown \bar{j}_t\right|^2_H=&2\left(\bar{f}(\bar{j}_t,\bar{u}_t),\bar{j}_t\right)+2\left(\partial_x g(\bar{u}) ,\bar{j}_t\right)+2\int_{\left|z\right|_H<1}\left(\bar{j}_t,\sigma_1(\bar{j}_t,z)\right)\dot{\tilde{\eta}}_1(t,dz)\\
&+\int_{\left|z\right|_H<1}\left|\sigma_1(\bar{j}_t,z)\right|^2_H\dot{\eta}_1(t,dz),
\end{align*}
we have
\begin{align*}
	\left \langle \left|\bar{j}_t\right|^2_H \right \rangle _t=\int_0^t\int_{\left|z\right|_H<1}\left(2\left(\bar{j}_t,\sigma_1(\bar{j}_t,z)\right)+\left|\sigma_1(\bar{j}_t,z)\right|^2_H\right)^2\eta_1(ds,dz),
\end{align*}
and
\begin{align*}
\frac{d}{dt}\left|\bar{j}_t\right|^2_H+\left|\triangledown \bar{j}_t\right|^2_H	\leq & C\left(1+\left|\bar{j}_t\right|^2_H\right)+2\int_{\left|z\right|_H<1}\left(\bar{j}_t,\sigma_1(\bar{j}_t,z)\right)\dot{\tilde{\eta}}_1(t,dz)\\
&+\int_{\left|z\right|_H<1}\left|\sigma_1(\bar{j}_t,z)\right|^2_H\dot{\eta}_1(t,dz),
\end{align*}
and so
\begin{align*}
	&\mathbb{E}\int_0^{\tau_{M,L,N}\wedge T}\left|\hat{j}^{\varepsilon}_s-\bar{j}_s\right|^{2p}_H\left|\triangledown\bar{j}_s\right|^2_Hds\\
	\leq & C\mathbb{E}\int_0^{\tau_{M,L,N}\wedge T}\left(1+\left|\bar{j}_s\right|^2_H\right)\left|\hat{j}^{\varepsilon}_s-\bar{j}_s\right|^{2p}_Hds\\
	&+2\mathbb{E}\int_0^{\tau_{M,L,N}\wedge T}\int_{\left|z\right|_H<1}\left(\bar{j}_s,\sigma_1(\bar{j}_s,z)\right)\left|\hat{j}^{\varepsilon}_s-\bar{j}_s\right|^{2p}_H\tilde{\eta}_1(ds,dz)\\
	&+\mathbb{E}\int_0^{\tau_{M,L,N}\wedge T}\int_{\left|z\right|_H<1}\left|\sigma_1(\bar{j}_s,z)\right|^2_H\left|\hat{j}^{\varepsilon}_s-\bar{j}_s\right|^{2p}_H\eta_1(ds,dz)-\mathbb{E}\int_0^{\tau_{M,L,N}\wedge T}\left|\hat{j}^{\varepsilon}_s-\bar{j}_s\right|^{2p}_Hd\left|\bar{j}_s\right|^2_H\\
	\leq & C_N\mathbb{E}\int_0^{\tau_{M,L,N}\wedge T}\left|\hat{j}^{\varepsilon}_s-\bar{j}_s\right|^{2p}_Hds+C\mathbb{E}\left(\int_0^{\tau_{M,L,N}\wedge T}\int_{\left|z\right|_H<1}\left|\hat{j}^{\varepsilon}_s-\bar{j}_s\right|^{4p}_H\left|\bar{j}_s\right|^2_H\left|\sigma_1(\bar{j}_s,z)\right|^2_H\eta_1(ds,dz)\right)^{\frac{1}{2}}\\
	&+C\mathbb{E}\left(\int_0^{\tau_{M,L,N}\wedge T}\int_{\left|z\right|_H<1}\left|\hat{j}^{\varepsilon}_s-\bar{j}_s\right|^{4p}_Hd\left \langle \left|\bar{j}_s\right|^2_H \right \rangle _s\right)^{\frac{1}{2}}\\
	\leq & C_N\mathbb{E}\int_0^{\tau_{M,L,N}\wedge T}\left|\hat{j}^{\varepsilon}_s-\bar{j}_s\right|^{2p}_Hds\\
	&+C\mathbb{E}\left(\int_0^{\tau_{M,L,N}\wedge T}\int_{\left|z\right|_H<1}\left|\hat{j}^{\varepsilon}_s-\bar{j}_s\right|^{4p}_H\left(2\left(\bar{j}_s,\sigma_1(\bar{j}_s,z)\right)+\left|\sigma_1(\bar{j}_s,z)\right|^2_H\right)^2\eta_1(ds,dz)\right)^{\frac{1}{2}}\\
	\leq & C_N\mathbb{E}\int_0^{\tau_{M,L,N}\wedge T}\left|\hat{j}^{\varepsilon}_s-\bar{j}_s\right|^{2p}_Hds.
\end{align*}
Hence
\begin{align*}
	\mathbb{E}\sup_{t\in[0,\tau_{M,L,N}\wedge T]}Q_2\leq  C_N\mathbb{E}\int_0^{\tau_{M,L,N}\wedge T}\left|\hat{j}^{\varepsilon}_s-\bar{j}_s\right|^{2p}_Hds.
\end{align*}
For $Q_3$, according to Proposition \ref{p1}, (\ref{e8.5.5}) and (\ref{e26}), we have
\begin{align*}
	\mathbb{E}\sup_{t\in[0,\tau_{M,L,N}\wedge T]}Q_3\leq & C_p\mathbb{E}\sup_{t\in[0,\tau_{M,L,N}\wedge T]}\int_0^t\left|f(j^{\varepsilon}_{s(\delta)},\hat{\theta}^{\varepsilon}_s)-\bar{f}(j^{\varepsilon}_{s(\delta)},u^{\varepsilon}_{s(\delta)})\right|^{2p}_Hds\\
	&+C_p\mathbb{E}\sup_{t\in[0,\tau_{M,L,N}\wedge T]}\int_0^t\left|\bar{f}(j^{\varepsilon}_{s(\delta)},u^{\varepsilon}_{s(\delta)})-\bar{f}(j^{\varepsilon}_s,u^{\varepsilon}_s)\right|^{2p}_Hds\\
	&+C_p\mathbb{E}\sup_{t\in[0,\tau_{M,L,N}\wedge T]}\int_0^t\left|\bar{f}(j^{\varepsilon}_s,u^{\varepsilon}_s)-\bar{f}(\hat{j}^{\varepsilon}_s,\hat{u}^{\varepsilon}_s)\right|^{2p}_Hds\\
	&+C_p\mathbb{E}\sup_{t\in[0,\tau_{M,L,N}\wedge T]}\int_0^t\left|\bar{f}(\hat{j}^{\varepsilon}_s,\hat{u}^{\varepsilon}_s)-\bar{f}(\bar{j}_s,\bar{u}_s)\right|^{2p}_Hds\\
	\leq & C_p\mathbb{E}\sup_{t\in[0,\tau_{M,L,N}\wedge T]}\int_0^t\left|f(j^{\varepsilon}_{s(\delta)},\hat{\theta}^{\varepsilon}_s)-\bar{f}(j^{\varepsilon}_{s(\delta)},u^{\varepsilon}_{s(\delta)})\right|^{2p}_Hds\\
	&+C_p\mathbb{E}\int_0^{\tau_{M,L,N}\wedge T}\tilde{\kappa}\left(\left|j^{\varepsilon}_s-j^{\varepsilon}_{s(\delta)}\right|^{2p}_H\right) ds+C_p\mathbb{E}\int_0^{\tau_{M,L,N}\wedge T}\tilde{\kappa}\left(\left|j^{\varepsilon}_s-\hat{j}^{\varepsilon}_s\right|^{2p}_H\right) ds\\
	&+C_p\mathbb{E}\int_0^{\tau_{M,L,N}\wedge T}\tilde{\kappa}\left(\left|\hat{j}^{\varepsilon}_s-\bar{j}_s\right|^{2p}_H\right) ds\\
	\leq & C_{p,T}\int_0^{\tau_{M,L,N}\wedge T}\tilde{\kappa}\left(\mathbb{E}\sup_{r\in[0,\tau_{M,L,N}\wedge s]}\left|\hat{j}^{\varepsilon}_s-\bar{j}_s\right|^{2p}_H\right) ds\\
	&+C_p\mathbb{E}\sup_{t\in[0,\tau_{M,L,N}\wedge T]}\int_0^t\left|f(j^{\varepsilon}_{s(\delta)},\hat{\theta}^{\varepsilon}_s)-\bar{f}(j^{\varepsilon}_{s(\delta)},u^{\varepsilon}_{s(\delta)})\right|^{2p}_Hds\\
	&+C_{p,T}\tilde{\kappa}\left(\frac{1}{\tau_{M,L,N}\wedge T}\int_0^{\tau_{M,L,N}\wedge T}\left|j^{\varepsilon}_s-j^{\varepsilon}_{s(\delta)}\right|^{2p}_Hds\right)\\
	&+C_{p,T}\tilde{\kappa}\left(\frac{1}{\tau_{M,L,N}\wedge T}\int_0^{\tau_{M,L,N}\wedge T}\left|j^{\varepsilon}_s-\hat{j}^{\varepsilon}_s\right|^{2p}_Hds\right)\\
	\leq & C_{p,T}\int_0^{\tau_{M,L,N}\wedge T}\tilde{\kappa}\left(\mathbb{E}\sup_{r\in[0,\tau_{M,L,N}\wedge s]}\left|\hat{j}^{\varepsilon}_s-\bar{j}_s\right|^{2p}_H\right) ds\\
	&+C_p\left\{\mathbb{E}\sup_{t\in[0,\tau_{M,L,N}\wedge T]}\int_0^t\left|f(j^{\varepsilon}_{s(\delta)},\hat{\theta}^{\varepsilon}_s)-\bar{f}(j^{\varepsilon}_{s(\delta)},u^{\varepsilon}_{s(\delta)})\right|^{2p}_Hds\right\}_{Q_{3,1}}\\
	&+C_{p,T}\tilde{\kappa}\left(\mathcal{O}\left(\delta\right)+C_{p,T,M}\kappa\left(C_{T,M}\delta^{\frac{1}{2}}+\mathcal{O}\left(\delta\right)\right)\right)\\
	&+C_{p,T}\tilde{\kappa}\left(C_{T,M}\delta^{\frac{1}{2}}\right).
\end{align*}
For $Q_{3,1}$, we have
\begin{align*}
	Q_{3,1}=&\mathbb{E}\sup_{t\in[0,\tau_{M,L,N}\wedge T]}\sum_{k=0}^{\left[\frac{t}{\delta}\right]-1}\int_{k\delta}^{(k+1)\delta}\left|f(j^{\varepsilon}_{k\delta},\hat{\theta}^{\varepsilon}_s)-\bar{f}(j^{\varepsilon}_{k\delta},u^{\varepsilon}_{k\delta})\right|^{2p}_Hds\\
	&+\mathbb{E}\sup_{t\in[0,\tau_{M,L,N}\wedge T]}\int_{\left[\frac{t}{\delta}\right]\delta}^t\left|f(j^{\varepsilon}_{\left[\frac{t}{\delta}\right]\delta},\hat{\theta}^{\varepsilon}_s)-\bar{f}(j^{\varepsilon}_{\left[\frac{t}{\delta}\right]\delta},u^{\varepsilon}_{\left[\frac{t}{\delta}\right]\delta})\right|^{2p}_Hds\\
	\leq &\left[\frac{T}{\delta}\right]\max_{0\leq k\leq \left[\frac{t}{\delta}\right]-1}\mathbb{E}\int_{k\delta}^{(k+1)\delta}\left|f(j^{\varepsilon}_{k\delta},\hat{\theta}^{\varepsilon}_s)-\bar{f}(j^{\varepsilon}_{k\delta},u^{\varepsilon}_{k\delta})\right|^{2p}_Hds+C_T\delta\\
	\overset{\mathrm{s=r +k\delta }}{\leq}&\left[\frac{T}{\delta}\right]\max_{0\leq k\leq \left[\frac{t}{\delta}\right]-1}\mathbb{E}\int_{0}^{\delta}\left|f(j^{\varepsilon}_{k\delta},\hat{\theta}^{\varepsilon}_{r +k\delta})-\bar{f}(j^{\varepsilon}_{k\delta},u^{\varepsilon}_{k\delta})\right|^{2p}_Hdr+C_T\delta.
\end{align*}
Note that $\hat{\theta}^{\varepsilon}_{r +k\delta}=\tilde{\theta}^{u^{\varepsilon}_{k\delta},\theta_0}_{\frac{r +k\delta}{\varepsilon}}$, by (\ref{e11}), we have
\begin{align*}
	&\mathbb{E}\int_{0}^{\delta}\left|f(j^{\varepsilon}_{k\delta},\hat{\theta}^{\varepsilon}_{r +k\delta})-\bar{f}(j^{\varepsilon}_{k\delta},u^{\varepsilon}_{k\delta})\right|^{2p}_Hdr\\
	=&\mathbb{E}\int_{0}^{\delta}\left|f(j^{\varepsilon}_{k\delta},\tilde{\theta}^{u^{\varepsilon}_{k\delta},\theta_0}_{\frac{r +k\delta}{\varepsilon}})-\int_H f(j^{\varepsilon}_{k\delta},\theta)\pi^{u^{\varepsilon}_{k\delta}}(d\theta)\right|^{2p}_Hdr\\
	\leq &C_p\mathbb{E}\int_{0}^{\delta}\int_H \left|f(j^{\varepsilon}_{k\delta},\tilde{\theta}^{u^{\varepsilon}_{k\delta},\theta_0}_{\frac{r +k\delta}{\varepsilon}})-f(j^{\varepsilon}_{k\delta},\theta)\right|^{2p}_H\pi^{u^{\varepsilon}_{k\delta}}(d\theta)dr\\
	=& C_p\mathbb{E}\int_{0}^{\delta}\int_H \left|f(j^{\varepsilon}_{k\delta},\tilde{\theta}^{u^{\varepsilon}_{k\delta},\theta_0}_{\frac{r +k\delta}{\varepsilon}})-f(j^{\varepsilon}_{k\delta},\tilde{\theta}^{u^{\varepsilon}_{k\delta},\theta}_{\frac{r +k\delta}{\varepsilon}})\right|^{2p}_H\pi^{u^{\varepsilon}_{k\delta}}(d\theta)dr\\
	\leq & C_p\int_{0}^{\delta}\int_H \kappa\left(\mathbb{E}\left|\tilde{\theta}^{u^{\varepsilon}_{k\delta},\theta_0}_{\frac{r +k\delta}{\varepsilon}}-\tilde{\theta}^{u^{\varepsilon}_{k\delta},\theta}_{\frac{r +k\delta}{\varepsilon}}\right|^{2p}_H\right)\pi^{u^{\varepsilon}_{k\delta}}(d\theta)dr\\
	\leq &C_p\int_{0}^{\delta}\int_H\kappa\left(e^{-\frac{\lambda_p(r+k\delta)}{\varepsilon}}\left|\theta_0-\theta\right|^{2p}_H\right)\pi^{u^{\varepsilon}_{k\delta}}(d\theta)dr\\
	\leq &C_p\int_{0}^{\delta}\kappa\left(e^{-\frac{\lambda_p(r+k\delta)}{\varepsilon}}\int_H\left|\theta_0-\theta\right|^{2p}_H\pi^{u^{\varepsilon}_{k\delta}}(d\theta)\right)dr\\
	\leq & C_p\int_{0}^{\delta}\kappa\left(Ce^{-\frac{\lambda_p(r+k\delta)}{\varepsilon}}\right)dr\\
	\leq &C_p\delta\kappa\left(\frac{C}{\delta}\int_{0}^{\delta}e^{-\frac{\lambda_p(r+k\delta)}{\varepsilon}}dr\right)\\
	\leq &C_p\delta\kappa\left(\frac{C\varepsilon}{\delta}\right).
\end{align*}
Therefore
\begin{align*}
	Q_{3,1}\leq C_{p,T}\kappa\left(\frac{C\varepsilon}{\delta}\right)+C_T\delta.
\end{align*}
Hence
\begin{align*}
	\mathbb{E}\sup_{t\in[0,\tau_{M,L,N}\wedge T]}Q_3\leq & C_{p,T}\int_0^{\tau_{M,L,N}\wedge T}\tilde{\kappa}\left(\mathbb{E}\sup_{r\in[0,\tau_{M,L,N}\wedge s]}\left|\hat{j}^{\varepsilon}_s-\bar{j}_s\right|^{2p}_H\right) ds\\
		&+C_{p,T}\tilde{\kappa}\left(\mathcal{O}\left(\delta\right)+C_{p,T,M}\kappa\left(C_{T,M}\delta^{\frac{1}{2}}+\mathcal{O}\left(\delta\right)\right)\right)\\
	&+C_{p,T}\tilde{\kappa}\left(C_{T,M}\delta^{\frac{1}{2}}\right)+C_{p,T}\kappa\left(\frac{C\varepsilon}{\delta}\right)+C_T\delta.
\end{align*}
Similar to $Q_3$, yet more readily than $Q_3$, we can obtain that
\begin{align*}
	\mathbb{E}\sup_{t\in[0,\tau_{M,L,N}\wedge T]}Q_4\leq & C_{p,T}\int_0^{\tau_{M,L,N}\wedge T}\kappa\left(\mathbb{E}\sup_{r\in[0,\tau_{M,L,N}\wedge s]}\left|\hat{j}^{\varepsilon}_s-\bar{j}_s\right|^{2p}_H\right) ds\\
		&+C_{p,T}\kappa\left(\mathcal{O}\left(\delta\right)+C_{p,T,M}\kappa\left(C_{T,M}\delta^{\frac{1}{2}}+\mathcal{O}\left(\delta\right)\right)\right)\\
	&+C_{p,T}\kappa\left(C_{T,M}\delta^{\frac{1}{2}}\right)+C_{p,T}\kappa\left(\frac{C\varepsilon}{\delta}\right)+C_T\delta.
\end{align*}
For $Q_5$, we have
\begin{align*}
	\mathbb{E}\sup_{t\in[0,\tau_{M,L,N}\wedge T]}Q_5=&\mathbb{E}\int_0^{\tau_{M,L,N}\wedge T}\int_{\left|z\right|_H<1}\left|\sigma_1(j^{\varepsilon}_s,z)-\sigma_1(\bar{j}_s,z)\right|^{2p}_H\nu_1(dz)ds\\
	\leq & C_p\mathbb{E}\int_0^{\tau_{M,L,N}\wedge T}\int_{\left|z\right|_H<1}\left|\sigma_1(j^{\varepsilon}_s,z)-\sigma_1(\hat{j}^{\varepsilon}_s,z)\right|^{2p}_H\nu_1(dz)ds\\
	&+C_p\mathbb{E}\int_0^{\tau_{M,L,N}\wedge T}\int_{\left|z\right|_H<1}\left|\sigma_1(\hat{j}^{\varepsilon}_s,z)-\sigma_1(\bar{j}_s,z)\right|^{2p}_H\nu_1(dz)ds\\
	\leq &C_p\mathbb{E}\int_0^{\tau_{M,L,N}\wedge T}\kappa\left(\left|j^{\varepsilon}_s-\hat{j}^{\varepsilon}_s\right|^{2p}_H\right)ds+C_p\mathbb{E}\int_0^{\tau_{M,L,N}\wedge T}\kappa\left(\left|\hat{j}^{\varepsilon}_s-\bar{j}_s\right|^{2p}_H\right)ds\\
	\leq & C_{p,T}\int_0^{\tau_{M,L,N}\wedge T}\kappa\left(\mathbb{E}\sup_{r\in[0,\tau_{M,L,N}\wedge s]}\left|\hat{j}^{\varepsilon}_s-\bar{j}_s\right|^{2p}_H\right) ds\\
	&+C_{p,T}\kappa\left(\mathcal{O}\left(\delta\right)+C_{p,T,M}\kappa\left(C_{T,M}\delta^{\frac{1}{2}}+\mathcal{O}\left(\delta\right)\right)\right).
\end{align*}
For $Q_6$, using B-D-G's inequality, we have
\begin{align*}
	&\mathbb{E}\sup_{t\in[0,\tau_{M,L,N}\wedge T]}Q_6\\
	\leq & C\mathbb{E}\left(\int_0^{\tau_{M,L,N}\wedge T}\left|\hat{j}^{\varepsilon}_s-\bar{j}_s\right|^{4p-2}_H\left|\sigma_1(j^{\varepsilon}_s,z)-\sigma_1(\bar{j}_s,z)\right|^2_H\eta_1(ds,dz)\right)^{\frac{1}{2}}\\
	\leq & C\mathbb{E}\left(\sup_{t\in[0,\tau_{M,L,N}\wedge T]}\left|\hat{j}^{\varepsilon}_s-\bar{j}_s\right|^{2p}_H\int_0^{\tau_{M,L,N}\wedge T}\left|\hat{j}^{\varepsilon}_s-\bar{j}_s\right|^{2p-2}_H\left|\sigma_1(j^{\varepsilon}_s,z)-\sigma_1(\bar{j}_s,z)\right|^2_H\eta_1(ds,dz)\right)^{\frac{1}{2}}\\
	\leq & C\gamma\mathbb{E}\sup_{t\in[0,\tau_{M,L,N}\wedge T]}\left|\hat{j}^{\varepsilon}_s-\bar{j}_s\right|^{2p}_H+C_{p,\gamma}\int_0^{\tau_{M,L,N}\wedge T}\mathbb{E}\sup_{r\in[0,\tau_{M,L,N}\wedge s]}\left|\hat{j}^{\varepsilon}_s-\bar{j}_s\right|^{2p}_Hds\\
	&+C_{p,\gamma}\mathbb{E}\int_0^{\tau_{M,L,N}\wedge T}\int_{\left|z\right|_H<1}\left|\sigma_1(j^{\varepsilon}_s,z)-\sigma_1(\bar{j}_s,z)\right|^{2p}_H\nu_1(dz)ds\\
	\leq &  C\gamma\mathbb{E}\sup_{t\in[0,\tau_{M,L,N}\wedge T]}\left|\hat{j}^{\varepsilon}_t-\bar{j}_t\right|^{2p}_H+C_{p,\gamma}\int_0^{\tau_{M,L,N}\wedge T}\mathbb{E}\sup_{r\in[0,\tau_{M,L,N}\wedge s]}\left|\hat{j}^{\varepsilon}_s-\bar{j}_s\right|^{2p}_Hds\\
	&+ C_{p,\gamma,T}\int_0^{\tau_{M,L,N}\wedge T}\kappa\left(\mathbb{E}\sup_{r\in[0,\tau_{M,L,N}\wedge s]}\left|\hat{j}^{\varepsilon}_s-\bar{j}_s\right|^{2p}_H\right) ds\\
	&+C_{p,\gamma,T}\kappa\left(\mathcal{O}\left(\delta\right)+C_{p,T,M}\kappa\left(C_{T,M}\delta^{\frac{1}{2}}+\mathcal{O}\left(\delta\right)\right)\right).
\end{align*}
In summary, choose $\gamma$ small enough, we have
\begin{align*}
	\mathbb{E}\sup_{t\in[0,\tau_{M,L,N}\wedge T]}\left|\hat{j}^{\varepsilon}_t-\bar{j}_t\right|^{2p}_H\leq & C_{p,T,N}\int_0^{\tau_{M,L,N}\wedge T}\hat{\kappa}\left(\mathbb{E}\sup_{r\in[0,\tau_{M,L,N}\wedge s]}\left|\hat{j}^{\varepsilon}_s-\bar{j}_s\right|^{2p}_H\right)ds\\
	&+C_{p,T}\hat{\kappa}\left(\mathcal{O}\left(\delta\right)+C_{p,T,M}\kappa\left(C_{T,M}\delta^{\frac{1}{2}}+\mathcal{O}\left(\delta\right)\right)\right)\\
	&+C_{p,T}\hat{\kappa}\left(C_{T,M}\delta^{\frac{1}{2}}\right)+C_{p,T}\kappa\left(\frac{C\varepsilon}{\delta}\right)+C_T\delta,
\end{align*}
where $\hat{\kappa}(u):=u+\kappa(u)+\tilde{\kappa}(u)$ is also a concave continuous non-decreasing function. Let 
\begin{align}\label{e28}
\nonumber	\mathcal{R}(\varepsilon,\delta):=&C_{p,T}\hat{\kappa}\left(\mathcal{O}_{p,T,M,L}\left(\delta\right)+C_{p,T,M}\kappa\left(C_{T,M}\delta^{\frac{1}{2}}+\mathcal{O}_{p,T,M,L}\left(\delta\right)\right)\right)\\
	&+C_{p,T}\hat{\kappa}\left(C_{T,M}\delta^{\frac{1}{2}}\right)+C_{p,T}\kappa\left(\frac{C\varepsilon}{\delta}\right)+C_T\delta.
\end{align}
Applying the Bihari's inequality (see \cite[Section 3]{Bihari}), we have
\begin{align*}
	\mathbb{E}\sup_{t\in[0,\tau_{M,L,N}\wedge T]}\left|\hat{j}^{\varepsilon}_t-\bar{j}_t\right|^{2p}_H\leq \hat{\Omega}^{-1}\left(\hat{\Omega}\left(\mathcal{R}(\varepsilon,\delta)\right)+C_{p,T,N}T\right),
\end{align*}
where
\begin{align}\label{e29}
	\hat{\Omega} (v):=\int_0^v\frac{du}{\hat{\kappa}(u)},\ \forall v>0.
\end{align}
\end{proof}

At the end of this subsection, we claim the following main result:
\begin{theorem}\label{th7}
Let $\sigma _1(j,\theta,z)\equiv \sigma _1(j,z)$	, under Assumptions \ref{a1}-\ref{a4}, for any $T>0$, $\left ( j_0,\theta_0\right )\in H\times H$ and for some $p\geq 1$, we have
\begin{align}\label{e30}
\lim_{\varepsilon \rightarrow 0}\mathbb{E}\sup_{t\in[0,T]}\left |j^{\varepsilon}_t-\bar{j}_t \right |^{2p}_{H}=0.
\end{align}
\end{theorem}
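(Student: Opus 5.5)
The plan is to combine Lemmas \ref{l7} and \ref{l8} on the localized interval $[0,\tau_{M,L,N}\wedge T]$, and then remove the localization with the uniform moment bounds (\ref{e4}), (\ref{e19}), (\ref{e24}), (\ref{e25}).

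\textbf{Step 1: the localized estimate.} By the elementary inequality $|a+b|^{2p}\le 2^{2p-1}(|a|^{2p}+|b|^{2p})$ we have
\begin{align*}
\mathbb{E}\sup_{t\in[0,\tau_{M,L,N}\wedge T]}\left|j^{\varepsilon}_t-\bar{j}_t\right|^{2p}_H\leq C_p\,\mathbb{E}\sup_{t\in[0,\tau_{M,L,N}\wedge T]}\left|j^{\varepsilon}_t-\hat{j}^{\varepsilon}_t\right|^{2p}_H+C_p\,\mathbb{E}\sup_{t\in[0,\tau_{M,L,N}\wedge T]}\left|\hat{j}^{\varepsilon}_t-\bar{j}_t\right|^{2p}_H.
\end{align*}
The first term is bounded by (\ref{e26}). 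The second is bounded by (\ref{e27}).

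\textbf{Step 2: choice of $\delta$.} I take $\delta=\delta(\varepsilon)=\varepsilon^{1/2}$, so that $\delta\to0$ and $\varepsilon/\delta\to0$. Since $\kappa$ and $\hat\kappa$ are continuous with $\kappa(0)=0$, every piece of $\mathcal{O}(\delta)$ and of $\mathcal{R}(\varepsilon,\delta)$ in (\ref{e28}) then tends to $0$ for fixed $M,L,N$. This includes the composite terms $\kappa(C\delta^{1/2}+\mathcal{O}(\delta))$.

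For the Bihari bound, the Osgood condition $\int_{0^+}du/\hat\kappa(u)=\infty$ is needed. It holds because $\hat\kappa\ge\kappa$ and $\int_{0^+}du/\kappa(u)=\infty$. Hence $\hat\Omega(v)\to-\infty$ as $v\to0^+$, where $\hat\Omega$ is taken as a primitive normalized at some $v_0>0$, as in Bihari's lemma, rather than from $0$. Consequently $\hat\Omega^{-1}\big(\hat\Omega(\mathcal{R})+C_{p,T,N}T\big)\to0$ as $\mathcal{R}\to0$. The localized quantity therefore vanishes as $\varepsilon\to0$ for each fixed $M,L,N$.

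\textbf{Step 3: removing the stopping times.} I split
\begin{align*}
\mathbb{E}\sup_{t\in[0,T]}\left|j^{\varepsilon}_t-\bar{j}_t\right|^{2p}_H\le \mathbb{E}\sup_{t\in[0,\tau_{M,L,N}\wedge T]}\left|j^{\varepsilon}_t-\bar{j}_t\right|^{2p}_H+\mathbb{E}\Big[\sup_{t\in[0,T]}\left|j^{\varepsilon}_t-\bar{j}_t\right|^{2p}_H\chi\{\tau_{M,L,N}<T\}\Big].
\end{align*}
The second term is controlled by Cauchy–Schwarz. The factor $\big(\mathbb{E}\sup_t|j^\varepsilon-\bar j|^{4p}_H\big)^{1/2}$ is uniformly bounded in $\varepsilon$ by (\ref{e4}) and (\ref{e19}) applied with $2p$ in place of $p$. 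The remaining factor is $\mathbb{P}(\tau_{M,L,N}<T)^{1/2}$, which I estimate via Chebyshev:
\begin{align*}
\mathbb{P}(\tau_{M,L,N}<T)\le \frac{\mathbb{E}\sup|j^\varepsilon|^{2}_H}{M^2}+\mathbb{P}\Big(\sup_{t\le T}(|\theta^\varepsilon_t|_H+|\hat\theta^\varepsilon_t|_H)>L\Big)+\frac{\mathbb{E}\sup(|\hat j^\varepsilon|_H+|\bar j|_H)^2}{N^2}.
\end{align*}
The $M$ and $N$ terms are uniform in $\varepsilon$ by (\ref{e4}), (\ref{e19}) and (\ref{e25}).

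\textbf{Main obstacle: the $L$-localization.} Only $\sup_t\mathbb{E}|\theta^\varepsilon|^{2p}$ is uniform in $\varepsilon$; by (\ref{xe11}), $\mathbb{E}\sup_t|\theta^\varepsilon|^{2p}$ grows like $\varepsilon^{-1}$, and the same holds for $\hat\theta^\varepsilon$. A naive Chebyshev bound thus gives $C(1+\varepsilon^{-1})/L^{2p}$. I would handle this by letting $L=L(\varepsilon)\to\infty$ together with $\varepsilon\to0$, say $L^{2p}=\varepsilon^{-2}$, so this probability tends to $0$.

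This forces me to track the dependence on $L$ in Lemma \ref{l6}. There $L$ enters only through $C_{p,T,M,L}\delta^{1/2p}$, and I expect that constant to be polynomial in $L$, say $L^{2}$ coming from the $|\hat\theta-\theta|^{2p-2}$ factor. Choosing $\delta=\varepsilon^{a}$ with $a$ large enough (but $a<1$, so that $\varepsilon/\delta\to0$) makes $L^{2}\delta^{1/2p}\to0$. If this bookkeeping fails, the fallback is to redo Lemma \ref{l6} without the $L$-localization: estimate the term $\mathbb{E}\int|j^\varepsilon_t-j^\varepsilon_{t(\delta)}|^2|\nabla\theta^\varepsilon_t|^2$ directly via the energy identity for $|\theta^\varepsilon|^2$ and (\ref{e5}), using Hölder with (\ref{e8.5.5}).

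Finally, I let $\varepsilon\to0$ first, with $L=L(\varepsilon)$ and $\delta=\delta(\varepsilon)$ coupled as above, and then let $M,N\to\infty$. This yields (\ref{e30}).
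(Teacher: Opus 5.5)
Your Steps 1--2 are exactly the paper's proof. Your diagnosis in Step 3 is also correct: the paper simply asserts $\tau_{M,L,N}\to\infty$ and lets $\varepsilon\to0$ before $M,L,N\to\infty$. That is not uniform in $\varepsilon$ for the $L$-part, because of (\ref{xe11}). But your remedy does not close this gap.

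Killing $C(1+\varepsilon^{-1})/L^{2p}$ requires $L^{2p}\varepsilon\to\infty$. At the same time, the term $\kappa(C\varepsilon/\delta)$ in $\mathcal{R}(\varepsilon,\delta)$ forces $\delta=\varepsilon^{a}$ with $a<1$. Even granting your guess $C_{p,T,M,L}\sim L^{2}$, write $L^{2p}=\varepsilon^{-b}$ with $b>1$. Then $L^{2}\delta^{1/(2p)}=\varepsilon^{(a-2b)/(2p)}$, which vanishes only if $a>2b>2$; for your choice $b=2$, only if $a>4$.

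The actual $L$-dependence is worse:
\begin{itemize}
\item The constant $C_{M,L}$ in the last estimate of Lemma \ref{l6} comes from bounding the $\sigma_2$-terms on $\{|\theta^{\varepsilon}|_H\le L\}$, and grows at least like $L^{2}$.
\item For $p>1$, the bound $|\hat{\theta}^{\varepsilon}-\theta^{\varepsilon}|^{2p-2}_H\le(2L)^{2p-2}$ contributes another $L^{2p-2}$.
\end{itemize}
So you would need $\delta\ll\varepsilon^{2p}$. No coupling of $L$ and $\delta$ to $\varepsilon$ works with the moment bound (\ref{xe11}) available, and the step you yourself call the main obstacle remains open.

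Everything then rests on your one-sentence fallback, removing $L$ from Lemma \ref{l6} altogether. That is where the real work lies, and it is not carried out. The energy identity for $|\theta^{\varepsilon}|^2_H$ does not handle the weight $|\hat{\theta}^{\varepsilon}-\theta^{\varepsilon}|^{2p-2}_H$. Moreover, the jump part of that identity produces $\mathbb{E}\int|j^{\varepsilon}_t-j^{\varepsilon}_{t(\delta)}|^2_H|\theta^{\varepsilon}_t|^2_H\,dt$, which is small only if you have extra uniform integrability of $\theta^{\varepsilon}$.

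Your justification of the Osgood condition for $\hat{\kappa}$ is backwards. From $\hat{\kappa}\ge\kappa$ you get $\int_{0^+}du/\hat{\kappa}(u)\le\int_{0^+}du/\kappa(u)$, which yields nothing. What you need is $\hat{\kappa}\le C\kappa$ near $0$. This holds for $u+\kappa(u)$, since concavity and $\kappa(0)=0$ give $u\le C\kappa(u)$ near $0$. It fails in general for $\tilde{\kappa}(u)=\kappa(u+C\kappa(u))$: for $\kappa(u)=u\log(1/u)$ one gets $\tilde{\kappa}(u)\sim Cu\log^{2}(1/u)$, which is not Osgood. Then $\hat{\Omega}$ has a finite limit at $0^+$, and $\hat{\Omega}^{-1}(\hat{\Omega}(\mathcal{R})+C_{p,T,N}T)$ need not tend to $0$.

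The paper invokes Bihari and \cite[Lemma 2.3]{Gou} here without checking this either. Closing it requires an extra structural assumption, e.g.\ $\sigma_2$ Lipschitz in $j$, which by concavity gives $\tilde{\kappa}\le C\kappa$.

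Finally, using (\ref{e4}) and (\ref{e19}) at level $2p$ in Step 3 presupposes Assumptions \ref{a1}--\ref{a4} at level $2p$. That is an additional hypothesis, and a quite restrictive one because of the rapidly growing constant $M_{2p}$ in Assumption \ref{a4}.
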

\begin{proof}
By (\ref{e26}) and (\ref{e27}), we have
\begin{align*}
	\mathbb{E}\sup_{t\in[0,\tau_{M,L,N}\wedge T]}\left |j^{\varepsilon}_t-\bar{j}_t \right |^{2p}_{H}\leq & C_p\mathbb{E}\sup_{t\in[0,\tau_{M,L,N}\wedge T]}\left |j^{\varepsilon}_t-\hat{j}^{\varepsilon}_t \right |^{2p}_{H}+\mathbb{E}\sup_{t\in[0,\tau_{M,L,N}\wedge T]}\left |\hat{j}^{\varepsilon}_t-\bar{j}_t \right |^{2p}_{H}\\
	\leq & \mathcal{O}\left(\delta\right)+C_{p,T,M}\kappa\left(C_{T,M}\delta^{\frac{1}{2}}+\mathcal{O}\left(\delta\right)\right)\\
	&+ \hat{\Omega}^{-1}\left(\hat{\Omega}\left(\mathcal{R}(\varepsilon,\delta)\right)+C_{p,T,N}T\right).
\end{align*}
Let $\delta =\varepsilon ^{\frac{1}{2}}$, note that
\begin{align*}
	\mathcal{O}\left(\varepsilon ^{\frac{1}{2}}\right)\rightarrow 0,\quad and\quad \mathcal{R}\left(\varepsilon,\varepsilon ^{\frac{1}{2}}\right)\rightarrow 0,
\end{align*}
as $\varepsilon\rightarrow 0$. According to \cite[Lemma 2.3]{Gou}, we have
\begin{align*}
	\hat{\Omega}^{-1}\left(\hat{\Omega}\left(\mathcal{R}(\varepsilon,\delta)\right)+C_{p,T,N}T\right)\rightarrow 0, \quad as\quad \mathcal{R}\left(\varepsilon,\varepsilon ^{\frac{1}{2}}\right)\rightarrow 0.
\end{align*}
Hence
\begin{align*}
	\lim_{\varepsilon\rightarrow 0}\mathbb{E}\sup_{t\in[0,\tau_{M,L,N}\wedge T]}\left |j^{\varepsilon}_t-\bar{j}_t \right |^{2p}_{H}=0.
\end{align*}
Since (\ref{e4}), (\ref{e5}), (\ref{e19}), (\ref{e24}) and (\ref{e25}) implies that $\tau_{M,L,N}\rightarrow \infty$ as $M,L,N\rightarrow\infty$, let $\varepsilon \rightarrow 0$ firstly, $M,L,N\rightarrow \infty $ secondly,  we obtain
\begin{align*}
\lim_{\varepsilon \rightarrow 0}\mathbb{E}\sup_{t\in[0,T]}\left | j^{\varepsilon}_t-\bar{j}_t\right |^{2p}_{H}=0.
\end{align*}
\end{proof}

\section{Example and numerical simulation}\label{s6}
In this section, we provide an example to verify the theoretical results of this paper.
\subsection{Example}
Let us consider the following two-scale system:
\begin{equation}\label{e31}
	\left\{
\begin{aligned}
&\frac{\partial j^{\varepsilon}}{\partial t} + (u^{\varepsilon}\cdot \nabla)j^{\varepsilon} 
= \triangle j^{\varepsilon} - \frac{1}{2}\mathrm{sgn}(j^{\varepsilon})|j^{\varepsilon}|^{2/3} - \frac{3}{10}\mathrm{sgn}(\theta^{\varepsilon})|\theta^{\varepsilon}|^{1/3}+ \partial_x\theta^{\varepsilon} \\
&\qquad\qquad\qquad\qquad + \int_{|z|_H<1}\frac{\left(1+(j^{\varepsilon})^2\right)^{1/3}}{\sqrt{1+|z|_H^2}}\dot{\tilde{\eta}}_1(t,dz),\\
&\frac{\partial\theta^{\varepsilon}}{\partial t} + \frac{1}{\varepsilon}(u^{\varepsilon}\cdot \nabla)\theta^{\varepsilon} 
= \frac{1}{\varepsilon}\triangle\theta^{\varepsilon} - \int_{|z|_H<1}\frac{1}{2}\theta^{\varepsilon} e^{-|z|_H^2} \dot{\tilde{\eta}}^{\varepsilon}_2(t,dz),\\
&j^{\varepsilon}(0) = j_0, \quad \theta^{\varepsilon}(0) = \theta_0,
\end{aligned}
\right.
\end{equation}
where $\mathrm{sgn}(\cdot)$ denotes the sign function, and $z = \sum_{k=1}^{\infty} z_k e_k$ is the expansion of $z$ in the basis $\{e_k\}$ of $H$. The compensated Poisson random measures $\tilde{\eta}_1(t,z)=\eta_1(t,z)-\nu_1(z)\cdot t$ and $\tilde{\eta}^{\varepsilon}_2(t,z)=\eta_2(\frac{1}{\varepsilon}t,z)-\frac{1}{\varepsilon}\nu_2(z)\cdot t$, and the intensity measures $\nu_i(i=1,2)$ defined by
\begin{align*}
	\nu_i(dz) = \frac{C_{\nu_i}}{|z|_H^{1+\beta_i}}dz, \quad i=1,2,\quad for\quad |z|_H < 1,
\end{align*}
where $\beta_i\in(0,1)$ and $C_{\nu_i}$ is normalization constant.

Compared with the original equation (\ref{e2}), it is readily apparent that
\begin{align*}
	&f(j,\theta)== -\frac{1}{2}\mathrm{sgn}(j)\left|j\right|^{2/3} - \frac{3}{10}\mathrm{sgn}(\theta)\left|\theta\right|^{1/3},\\
	&\sigma_1(j,z) = \left(1+j^2\right)^{1/3}\frac{1}{\sqrt{1+|z|_H^2}},\\
	&\sigma_2(j,\theta,z) \equiv\sigma_2(\theta,z)= -\frac{1}{2}\theta e^{-|z|_H^2},
\end{align*}
and satisfies Assumptions \ref{a1}-\ref{a4} with $p=1$ and $\kappa(u)=Cu^{\frac{2}{3}}$.

For and fixed $u \in H^1$, the frozen equation is:
\begin{equation}\label{e32}
\left\{
\begin{aligned}
&\frac{\partial\tilde{\theta}_t}{\partial t} + (u\cdot \nabla)\tilde{\theta}_t = \triangle\tilde{\theta}_t - \int_{|z|_H<1}\frac{1}{2}\tilde{\theta}_t e^{-|z|_H^2} \dot{\tilde{\eta}}_2(t,dz),\\
&\tilde{\theta}_0 = \theta.
\end{aligned}
\right.
\end{equation}
It is easy to check that (\ref{e32}) admits a unique ergodic invariant measure $\pi^{u}$ on $H$. In addition, due to $\nu_2(\cdot)$ is symmetric, we have $\pi^{u}$ is symmetric. Hence, the integral of the odd function with respect to $\pi^{u}$ vanishes. Therefore
\begin{align*}
	\bar{f}(j,u)= -\frac{1}{2}\mathrm{sgn}(j)|j|^{2/3} - \frac{3}{10}\int_H \mathrm{sgn}(\hat{\theta})|\hat{\theta}|^{1/3}\pi^{u}(d\hat{\theta})=-\frac{1}{2}\mathrm{sgn}(j)|j|^{2/3},
\end{align*}
and
\begin{align*}
	g(u)=\int_H \hat{\theta} \, \pi^{u}(d\hat{\theta}) = 0.
\end{align*}
And so, the averaged equation give by
\begin{equation}\label{e33}
\left\{
\begin{aligned}
&\frac{\partial \bar{j}}{\partial t} + (\bar{u} \cdot \nabla)\bar{j} 
= \triangle \bar{j} - \frac{1}{2}\mathrm{sgn}(\bar{j})|\bar{j}|^{2/3} + \int_{|z|_H<1}\frac{(1+\bar{j}^2)^{1/3}}{\sqrt{1+|z|_H^2}}\dot{\tilde{\eta}}_1(t,dz),\\
&\bar{j}(0) = j_0.
\end{aligned}
\right.
\end{equation}

\subsection{Numerical simulation}

Let $\beta_1=0.8$, $\beta_2=0.6$, for $\varepsilon=1,0.5,0.25,0.1$, define
\begin{align*}
	error(\varepsilon):=\sup_{t\in[0,1]}\left|j^{\varepsilon}-\bar{j}\right|_H,
\end{align*}
and choose 100 samples with respect to $(j^{\varepsilon},\theta^{\varepsilon})$ and $\bar{j}$, we get the error distribution for 100 samples with different $\varepsilon$, see Figure \ref{f1}. 
\begin{figure}[H]
\centering
\includegraphics[width=0.78\textwidth]{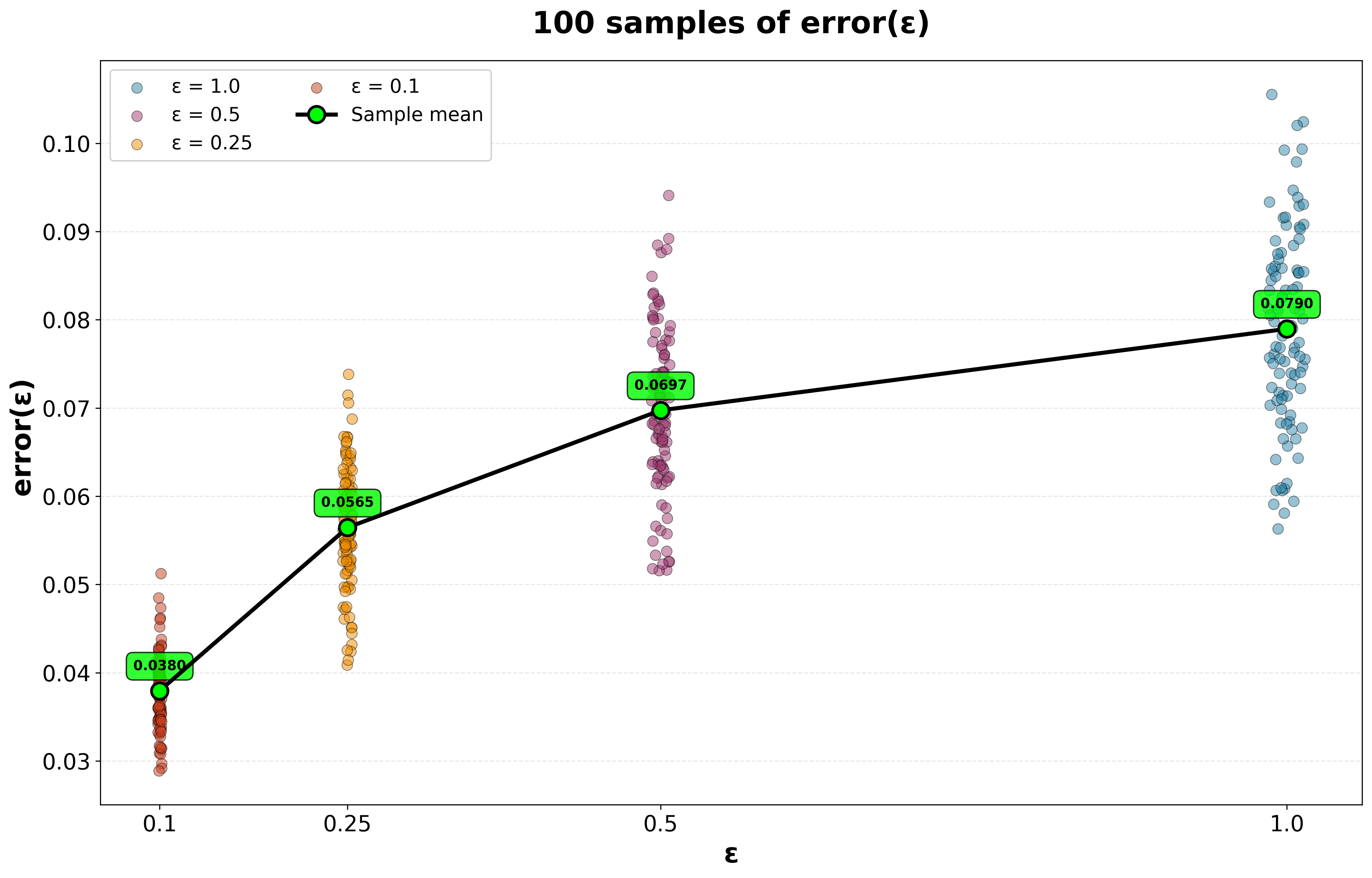}
\caption{100 samples of error($\varepsilon$)} % 图片下方解释
        \label{f1} % 便于文章中引用图片
\end{figure}
%\FloatBarrier% 图片不悬浮

As can be easy seen from the Figure $\ref{f1}$, the numerical simulation results for 100 samples are consistent with Theorem \ref{th6}.

Moreover, the mean squared error of $\varepsilon$ is shown in Figure \ref{f2}.
\begin{figure}[H]
\centering
\includegraphics[width=0.78\textwidth]{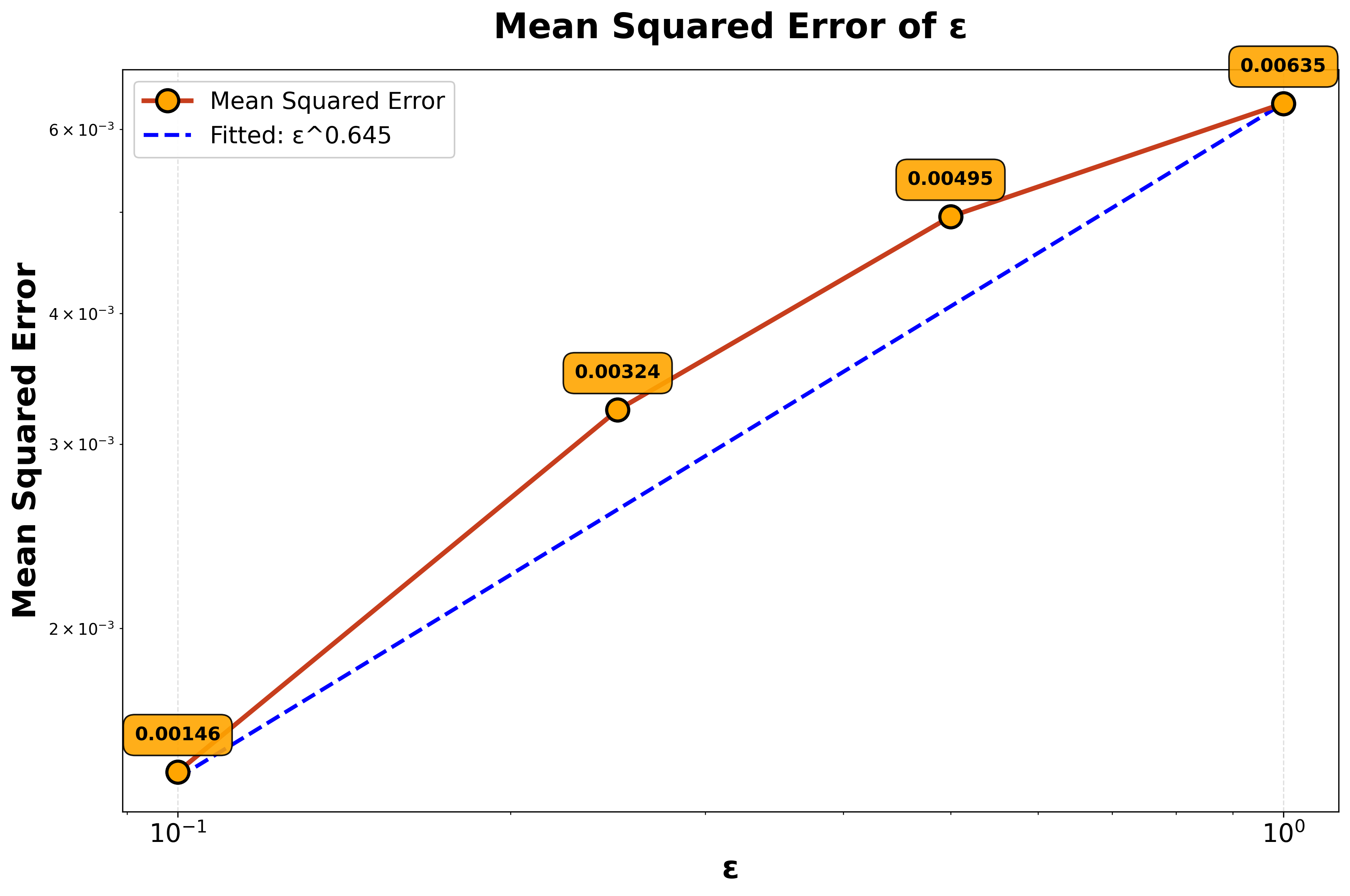}
\caption{mean squared error of $\varepsilon$} % 图片下方解释
        \label{f2} % 便于文章中引用图片
\end{figure}
%\FloatBarrier% 图片不悬浮

Observing Figure \ref{f2}, the simulation results are consistent with the conclusion of Theorem \ref{th7} with $p=1$. In addition, in this example, the mean squared error obtained through power law fitting is
\begin{align*}
	\text{mean squared error}\approx C\varepsilon^{0.645}.
\end{align*}

%% -------------------------------------------------------------------

\section*{Appendix}
\appendix
\renewcommand{\appendixname}{Appendix~\Alph{section}}
\addcontentsline{toc}{chapter}{Appendix}
\subsection*{Well-posedness of (\ref{e2})}

For convenience, let $\varepsilon=1$, according to (\ref{e4}) and (\ref{xe11}), it is straightforward to prove the existence of solution by the standard Galerkin's approximation. We turn to consider the problem of uniqueness.

Let $(j_1,\theta_1)$ and $(j_2,\theta_2)$ be two solution of (\ref{e2}) with the same initial value $(j_0,\theta_0)$. For any $T>0$ and $R>0$, we define the stopping time
\begin{align*}
	\tau_R:=\left\{t\leq 0;\left|j_2\right|_H+\left|\theta_2\right|_H>R\right\}.
\end{align*}
By It\^{o}'s formula, we have
\begin{align*}
	\left|j_1-j_2\right|^2_H&+2\int_0^t\left(\left(u_1\cdot\triangledown\right)j_1-\left(u_2\cdot \triangledown\right)j_2,j_1-j_2\right)ds=2\int_0^t\left(\triangle\left(j_1-j_2\right),j_1-j_2\right)ds\\
	&+2\int_0^t\left(f(j_1,\theta_1)-f(j_2,\theta_2),j_1-j_2\right)ds+2\int_0^t\left(\partial_x\theta_1-\partial_x\theta_2,j_1-j_2\right)ds\\
	&+\int_0^t\int_{\left|z\right|_H<1}\left|\sigma_1(j_1,\theta_1,z)-\sigma_1(j_2,\theta_2,z)\right|^2_H\eta_1(ds,dz)\\
	&+2\int_0^t\int_{\left|z\right|_H<1}\left(\sigma_1(j_1,\theta_1,z)-\sigma_1(j_2,\theta_2,z),j_1-j_2\right)\tilde{\eta}_1(ds,dz).
\end{align*}
Therefore
\begin{align*}
	\left|j_1-j_2\right|^2_H&+2\int_0^t\left|\triangledown\left( j_1-j_2\right)\right|^2_Hds=2\left\{\int_0^t\left(\left(\left(u_1-u_2\right)\cdot \triangledown\right)\left(j_1-j_2\right),j_2\right)ds\right\}_{H_1}\\
	&+2\left\{\int_0^t\left(f(j_1,\theta_1)-f(j_2,\theta_2),j_1-j_2\right)ds\right\}_{H_2}-2\left\{\int_0^t\left(\theta_1-\theta_2,\partial_x j_1-\partial_x j_2\right)ds\right\}_{H_3}\\
	&+\left\{\int_0^t\int_{\left|z\right|_H<1}\left|\sigma_1(j_1,\theta_1,z)-\sigma_1(j_2,\theta_2,z)\right|^2_H\eta_1(ds,dz)\right\}_{H_4}\\
	&+2\left\{\int_0^t\int_{\left|z\right|_H<1}\left(\sigma_1(j_1,\theta_1,z)-\sigma_1(j_2,\theta_2,z),j_1-j_2\right)\tilde{\eta}_1(ds,dz)\right\}_{H_5}.
\end{align*}
By H\"{o}lder's inequality, Ladyzhenskaya's inequality and Young's inequality, we derive that
\begin{align*}
	\mathbb{E}\sup_{t\in[0,\tau_R\wedge T]}H_1\leq & \mathbb{E}\int_0^{\tau_R\wedge T}\left|u_1-u_2\right|_{L^4}\left|\triangledown\left(j_1-j_2\right)\right|_H\left|j_2\right|_{L^4}ds\\
	\leq & \mathbb{E}\int_0^{\tau_R\wedge T}\left|\triangledown\left(u_1-u_2\right)\right|_H\left|\triangledown\left(j_1-j_2\right)\right|_H\left|\triangledown j_2\right|_Hds\\
		\leq & \mathbb{E}\int_0^{\tau_R\wedge T}\left|j_1-j_2\right|_H\left|\triangledown\left(j_1-j_2\right)\right|_H\left|\triangledown j_2\right|_Hds\\
		\leq & \gamma \mathbb{E}\int_0^{\tau_R\wedge T}\left|\triangledown\left(j_1-j_2\right)\right|^2_Hds+C_{\gamma} \mathbb{E}\int_0^{\tau_R\wedge T}\left|j_1-j_2\right|^2_H\left|\triangledown j_2\right|^2_Hds.
\end{align*}
Note that
\begin{align*}
	\frac{d}{dt}\left|j_2\right|^2_H+2\left|\triangledown j_2\right|^2_H=&2\left(f(j_2,\theta_2),j_2\right)-2\left(\theta_2,\partial_x j_2\right)+\int_{\left|z\right|_H<1}\left|\sigma_1(j_2,\theta_2,z)\right|^2_H\dot{\eta}_1(t,dz)\\
	&+2\int_{\left|z\right|_H<1}\left(\sigma_1(j_2,\theta_2,z),j_2\right)\dot{\tilde{\eta}}_1(t,dz),
\end{align*}
we have
\begin{align*}
	\left \langle \left|j_2\right|^2_H \right \rangle _t=\int_0^t\int_{\left|z\right|_H<1}\left(\left|\sigma_1(j_2,\theta_2,z)\right|^2_H+2\left(\sigma_1(j_2,\theta_2,z),j_2\right)\right)^2\eta_1(ds,dz),
\end{align*}
and
\begin{align*}
	\frac{d}{dt}\left|j_2\right|^2_H+\left|\triangledown j_2\right|^2_H\leq &C\left(1+\left|j_2\right|^2_H+\left|\theta_2\right|^2_H\right)+\int_{\left|z\right|_H<1}\left|\sigma_1(j_2,\theta_2,z)\right|^2_H\dot{\eta}_1(t,dz)\\
	&+2\int_{\left|z\right|_H<1}\left(\sigma_1(j_2,\theta_2,z),j_2\right)\dot{\tilde{\eta}}_1(t,dz).
\end{align*}
Using B-D-G's inequality, we have
\begin{align*}
&\mathbb{E}\int_0^{\tau_R\wedge T}\left|j_1-j_2\right|^2_H\left|\triangledown j_2\right|^2_Hds\\
\leq &-	\mathbb{E}\int_0^{\tau_R\wedge T}\left|j_1-j_2\right|^2_Hd\left|j_2\right|^2_H+C\mathbb{E}\int_0^{\tau_R\wedge T}\left(1+\left|j_2\right|^2_H+\left|\theta_2\right|^2_H\right)\left|j_1-j_2\right|^2_Hds\\
&+\mathbb{E}\int_0^{\tau_R\wedge T}\int_{\left|z\right|_H<1}\left|\sigma_1(j_2,\theta_2,z)\right|^2_H\left|j_1-j_2\right|^2_H\eta_1(ds,dz)\\
&+2\mathbb{E}\int_0^{\tau_R\wedge T}\int_{\left|z\right|_H<1}\left(\sigma_1(j_2,\theta_2,z),j_2\right)\left|j_1-j_2\right|^2_H\tilde{\eta}_1(ds,dz)\\
\leq & C\mathbb{E}\left(\int_0^{\tau_R\wedge T}\int_{\left|z\right|_H<1}\left|j_1-j_2\right|^4_H\left(\left|\sigma_1(j_2,\theta_2,z)\right|^2_H+2\left(\sigma_1(j_2,\theta_2,z),j_2\right)\right)^2\eta_1(ds,dz)\right)^{\frac{1}{2}}\\
&+C_{R}\mathbb{E}\int_0^{\tau_R\wedge T}\left|j_1-j_2\right|^2_Hds+\mathbb{E}\int_0^{\tau_R\wedge T}\int_{\left|z\right|_H<1}\left|\sigma_1(j_2,\theta_2,z)\right|^2_H\left|j_1-j_2\right|^2_H\nu_1(dz)ds\\
&+C\mathbb{E}\left(\int_0^{\tau_R\wedge T}\int_{\left|z\right|_H<1}\left|\left(\sigma_1(j_2,\theta_2,z),j_2\right)\right|^2\left|j_1-j_2\right|^4_H\eta_1(ds,dz)\right)^{\frac{1}{2}}\\
\leq & C\gamma\mathbb{E}\sup_{t\in [0,\tau_R\wedge T]}\left|j_1-j_2\right|^2_H+C_{R,\gamma}\mathbb{E}\int_0^{\tau_R\wedge T}\left|j_1-j_2\right|^2_Hds
\end{align*}
Hence
\begin{align*}
	\mathbb{E}\sup_{t\in[0,\tau_R\wedge T]}H_1\leq &  C\gamma\mathbb{E}\sup_{t\in [0,\tau_R\wedge T]}\left|j_1-j_2\right|^2_H+\gamma \mathbb{E}\int_0^{\tau_R\wedge T}\left|\triangledown\left(j_1-j_2\right)\right|^2_Hds\\
	&+C_{R,\gamma}\mathbb{E}\int_0^{\tau_R\wedge T}\left|j_1-j_2\right|^2_Hds
\end{align*}
According to Assumption \ref{a1}, we have
\begin{align*}
	\mathbb{E}\sup_{t\in[0,\tau_R\wedge T]}H_2\leq C\mathbb{E}\int_0^{\tau_R\wedge T}\left|j_1-j_2\right|^2_Hds+ C\mathbb{E}\int_0^{\tau_R\wedge T}\kappa\left(\left|j_1-j_2\right|^2_H+\left|\theta_1-\theta_2\right|^2_H\right)ds.
\end{align*}
It is easy to obtain that
\begin{align*}
	\mathbb{E}\sup_{t\in[0,\tau_R\wedge T]}H_3\leq \gamma \mathbb{E}\int_0^{\tau_R\wedge T}\left|\triangledown\left(j_1-j_2\right)\right|^2_Hds+C_{\gamma}\mathbb{E}\int_0^{\tau_R\wedge T}\left|\theta_1-\theta_2\right|^2_Hds
\end{align*}
According to Assumption \ref{a2}, we can get
\begin{align*}
		\mathbb{E}\sup_{t\in[0,\tau_R\wedge T]}H_4\leq C\mathbb{E}\int_0^{\tau_R\wedge T}\kappa\left(\left|j_1-j_2\right|^2_H+\left|\theta_1-\theta_2\right|^2_H\right)ds,
\end{align*}
and
\begin{align*}
	\mathbb{E}\sup_{t\in[0,\tau_R\wedge T]}H_5\leq  C\gamma\mathbb{E}\sup_{t\in [0,\tau_R\wedge T]}\left|j_1-j_2\right|^2_H+ C_{\gamma}\mathbb{E}\int_0^{\tau_R\wedge T}\kappa\left(\left|j_1-j_2\right|^2_H+\left|\theta_1-\theta_2\right|^2_H\right)ds.
\end{align*}
Choose $\gamma$ small enough, we have
\begin{align*}
	\mathbb{E}\sup_{t\in [0,\tau_R\wedge T]}\left|j_1-j_2\right|^2_H\leq & C_{R}\mathbb{E}\int_0^{\tau_R\wedge T}\left|j_1-j_2\right|^2_H+\left|\theta_1-\theta_2\right|^2_Hds\\
	&+C\mathbb{E}\int_0^{\tau_R\wedge T}\kappa\left(\left|j_1-j_2\right|^2_H+\left|\theta_1-\theta_2\right|^2_H\right)ds.
\end{align*}
A similar derivation process, we can get
\begin{align*}
\mathbb{E}\sup_{t\in [0,\tau_R\wedge T]}\left|\theta_1-\theta_2\right|^2_H\leq & C_{R}\mathbb{E}\int_0^{\tau_R\wedge T}\left|\theta_1-\theta_2\right|^2_Hds\\
	&+C\mathbb{E}\int_0^{\tau_R\wedge T}\kappa\left(\left|j_1-j_2\right|^2_H+\left|\theta_1-\theta_2\right|^2_H\right)ds.
\end{align*}
Therefore
\begin{align*}
	\mathbb{E}\sup_{t\in [0,\tau_R\wedge T]}\left(\left|j_1-j_2\right|^2_H+\left|\theta_1-\theta_2\right|^2_H\right)\leq C_R\int_0^{\tau_R\wedge T}\check{\kappa} \left(\left|j_1-j_2\right|^2_H+\left|\theta_1-\theta_2\right|^2_H\right)ds,
\end{align*}
where $\check{\kappa}(u):=u+\kappa(u)$. By Bihari's inequality, we have
\begin{align*}
	\mathbb{E}\sup_{t\in [0,\tau_R\wedge T]}\left(\left|j_1-j_2\right|^2_H+\left|\theta_1-\theta_2\right|^2_H\right)\leq \check{\Omega}^{-1}\left(\check{\Omega}(0)+C_RT\right),
\end{align*}
where
\begin{align*}
	\check{\Omega}(v):=\int_0^v\frac{du}{\check{\kappa}(u)}, \forall v\geq 0.
\end{align*}
According to \cite[Lemma 2.3]{Gou}, we have
\begin{align*}
	\mathbb{E}\sup_{t\in [0,\tau_R\wedge T]}\left(\left|j_1-j_2\right|^2_H+\left|\theta_1-\theta_2\right|^2_H\right)\equiv 0.
\end{align*}
Since (\ref{e4}) and (\ref{xe11}) implies that $\tau_R\rightarrow \infty$ as $R\rightarrow \infty$, the uniqueness holds by dominated convergence theorem and letting $R\rightarrow \infty$.
\rightline{$\qedsymbol$}

\subsection*{Proof of Lemma \ref{l4}}

Note that $F$ is increasing and bounded with respect to $t$, we have
\begin{align*}
	\lim_{t\rightarrow+\infty}F(t)=\sup_{t\geq 0}F(t):=M.
\end{align*}
Therefore, for any $\varepsilon>0$, there exists some $\delta>0$, such that
\begin{align*}
	e^{-a\delta}M<\frac{\varepsilon}{2}.
\end{align*}
Let
\begin{align*}
	I(t):=\int_0^te^{-a(t-s)}f(s)ds.
\end{align*}
For sufficiently large $t$, we have
\begin{align*}
	I(t)=\left\{\int_0^{t-\delta}e^{-a(t-s)}f(s)ds\right\}_{I_1(t)}+\left\{\int_{t-\delta}^{t}e^{-a(t-s)}f(s)ds\right\}_{I_2(t)}.
\end{align*}
For $I_1(t)$, due to $\delta\leq t-s\leq t$, we have
\begin{align*}
	I_1(t)\leq e^{-a\delta}\int_0^{t-\delta}f(s)ds\leq e^{-a\delta}M<\frac{\varepsilon}{2}.
\end{align*}
For $I_2(t)$, we have
\begin{align*}
	I_2(t)\leq \int_{t-\delta}^{t}f(s)ds=F(t)-F(t-\delta)<\frac{\varepsilon}{2},
\end{align*}
for sufficiently large $t$. Hence, Lemma \ref{l4} hold.

\rightline{$\qedsymbol$} 

\section*{Acknowledgments}
The authors would like to acknowledge the following funding sources for their support:
\begin{itemize}
	\item \textbf{Yangyang Shi}: National Natural Science Foundation of China (No. 12401304);
	\item \textbf{Dong Su}:  the Higher Education Institutions Basic Science (Natural Science) Research Project of Jiangsu Province (No. 2025KJB110004);
	\item \textbf{Hui Liu}: National Natural Science Foundation of China (No. 12271293), Natural Science Foundation of Shandong Province (No. ZR2023MA002). 
\end{itemize}

\section*{Conflicts of Interest}
The authors declare no conflicts of interest.
%% -------------------------------------------------------------------

%\bibliographystyle{plain}
%\bibliography{ref1}

\begin{thebibliography}{99}

\bibitem{Zaussinger}{\textsc{F. Zaussinger and H. Spruit}, Semiconvection: numerical simulations, {\it Astron. Astrophys.}, {\bf 554} (2010) 1--13.}

\bibitem{Thomas}{\textsc{T. Bridges and D. Ratliff}, Double criticality and the two-way Boussinesq equation in stratified shallow water hydrodynamics, {\it Phys. Fluids}, {\bf 28} (2016) 062103.}

\bibitem{Ahmet}{\textsc{A. Bekir, A. Cevikel and E. Zahran}, New impressive representations for the soliton behaviors arising from the (2+1)-Boussinesq equation, {\it J. Ocean. Eng. Sci.} (2022).}

\bibitem{Farlin}{\textsc{J. Farlin and P. Maloszewski}, On the use of spring baseflow recession for a more accurate parameterization of aquifer transit time distribution functions, {\it Hydrol. Earth Syst. Sci.}, {\bf 17} (2013) 1825--1831.}

\bibitem{Knight}{\textsc{J. Knight and D. Rassam}, Groundwater head responses due to random stream stage fluctuations using basis splines, {\it Water Resour. Res.}, {\bf 43} (2007) W06435.}

\bibitem{Yang}{\textsc{W. Yang and C. Li}, General Propagation Lattice Boltzmann Model for the Boussinesq Equation, {\it Entropy}, {\bf 24} (2022) 486.}

\bibitem{Pu}{\textsc{X. Pu and B. Guo}, Global well-posedness of the stochastic 2D Boussinesq equations with partial viscosity, {\it Acta Math. Sci. Ser. B (Engl. Ed.)}, {\bf 31} (2011) 1968--1984.}

\bibitem{Lihuai}{\textsc{L. Du and T. Zhang}, Local and global existence of pathwise solution for the stochastic Boussinesq equations with multiplicative noises, {\it Stochastic Process. Appl.}, {\bf 130} (2020) 1545--1567.}

\bibitem{Diego}{\textsc{D. Alonso-Or\'{a}n and A. Bethencourt de Le\'{o}n}, On the well-posedness of stochastic Boussinesq equations with transport noise, {\it J. Nonlinear Sci.}, {\bf 30} (2020) 175--224.}

\bibitem{Jinyi}{\textsc{J. Sun, N. Li and M. Yang}, Global existence of mild solutions for 3D stochastic Boussinesq system in Besov spaces, {\it Math. Nachr.}, {\bf 298} (2025) 1105--1126.}

\bibitem{Shang}{\textsc{S. Wu and J. Huang}, Well-posedness and limit behavior of stochastic fractional Boussinesq equation driven by nonlinear noise, {\it Phys. D}, {\bf 461} (2024) Paper No. 134104, 22.}

\bibitem{Quyuan}{\textsc{Q. Lin, R. Liu and W. Wang}, Global existence for the stochastic Boussinesq equations with transport noise and small rough data, {\it SIAM J. Math. Anal.}, {\bf 56} (2024) 501--528.}

\bibitem{Yamazaki}{\textsc{K. Yamazaki}, Global martingale solution for the stochastic Boussinesq system with zero dissipation, {\it Stoch. Anal. Appl.}, {\bf 34} (2016) 404--426.}

\bibitem{Dejun}{\textsc{D. Luo}, Convergence of stochastic 2D inviscid Boussinesq equations with transport noise to a deterministic viscous system, {\it Nonlinearity}, {\bf 34} (2021) 8311--8330.}

\bibitem{Millet}{\textsc{J. Duan and A. Millet}, Large deviations for the Boussinesq equations under random influences, {\it Stochastic Process. Appl.}, {\bf 119} (2009) 2052--2081.}

\bibitem{Chengfeng}{\textsc{C. Sun, H. Gao, J. Duan and B. Schmalfu\ss}, Rare events in the Boussinesq system with fluctuating dynamical boundary conditions, {\it J. Differential Equations}, {\bf 248} (2010) 1269--1296.}

\bibitem{Yan}{\textsc{Y. Zheng and J. Huang}, Ergodicity of stochastic Boussinesq equations driven by L\'evy processes, {\it Sci. China Math.}, {\bf 56} (2013) 1195--1212.}

\bibitem{Jianhua}{\textsc{J. Huang, Y. Zheng, T. Shen and C. Guo}, Asymptotic properties of the 2D stochastic fractional Boussinesq equations driven by degenerate noise, {\it J. Differential Equations}, {\bf 310} (2022) 362--403.}

\bibitem{Haoran}{\textsc{H. Dai, B. You and T. Caraballo}, Asymptotical behavior of the 2D stochastic partial dissipative Boussinesq system with memory, {\it Commun. Nonlinear Sci. Numer. Simul.}, {\bf 149} (2025) Paper No. 108916, 15.}

\bibitem{Bertram}{\textsc{R. Bertram and J. Rubin}, Multi-timescale systems and fast-slow analysis, {\it Math. Biosci.}, {\bf 287} (2017) 105--121.}

\bibitem{Weinan}{\textsc{W. E and B. Engquist}, Multiscale modeling and computation, {\it Notices Amer. Math. Soc.}, {\bf 50} (2003) 1062--1070.}

\bibitem{Schutter}{\textsc{E. De Schutter}, Modeling intracellular calcium dynamics, in {\it Computational modeling methods for neuroscientists}, Comput. Neurosci., MIT Press, Cambridge, MA (2010) 93--105.}

\bibitem{Khasminskii1}{\textsc{R. Khasminskii}, A limit theorem for solutions of differential equations with a random right hand part, {\it Teor. Verojatnost. i Primenen}, {\bf 11} (1966) 444--462.}

\bibitem{Khasminskii2}{\textsc{R. Khasminskii}, On the principle of averaging the It\^{o}'s stochastic differential equations, {\it Kybernetika (Prague)}, {\bf 4} (1968) 260--279.}

\bibitem{Cerrai2}{\textsc{S. Cerrai and A. Lunardi}, Averaging principle for nonautonomous slow-fast systems of stochastic reaction-diffusion equations: the almost periodic case, {\it SIAM J. Math. Anal.}, {\bf 49} (2017) 2843--2884.}

\bibitem{Duan1}{\textsc{J. Duan and W. Wang}, Effective dynamics of stochastic partial differential equations, Elsevier Insights, Elsevier, Amsterdam (2014).}

\bibitem{Golec}{\textsc{J. Golec}, Stochastic averaging principle for systems with pathwise uniqueness, {\it Stochastic Anal. Appl.}, {\bf 13} (1995) 307--322.}

\bibitem{Di}{\textsc{D. Liu}, Strong convergence of principle of averaging for multiscale stochastic dynamical systems, {\it Commun. Math. Sci.}, {\bf 8} (2010) 999--1020.}

\bibitem{Liu1}{\textsc{W. Liu, M. R\"{o}ckner, X. Sun and Y. Xie}, Averaging principle for slow-fast stochastic differential equations with time dependent locally Lipschitz coefficients, {\it J. Differential Equations}, {\bf 268} (2020) 2910--2948.}

\bibitem{Pavliotis}{\textsc{G. A. Pavliotis and A. M. Stuart}, Multiscale methods: averaging and homogenization, Texts in Applied Mathematics, vol. 53, Springer, New York (2008).}

\bibitem{Pei1}{\textsc{B. Pei, Y. Xu and G. Yin}, Stochastic averaging for a class of two-time-scale systems of stochastic partial differential equations, {\it Nonlinear Anal.}, {\bf 160} (2017) 159--176.}

\bibitem{Guo}{\textsc{Q. Guo, P. Guo and F. Wan}, Strong convergence in the $p$th-mean of an averaging principle for two-time-scales SPDEs with jumps, {\it Adv. Difference Equ.} (2017) Paper No. 275, 23.}

\bibitem{Majda}{\textsc{A. Majda, I. Timofeyev and E. Vanden Eijnden}, A mathematical framework for stochastic climate models, {\it Comm. Pure Appl. Math.}, {\bf 54} (2001) 891--974.}

\bibitem{Arnold}{\textsc{L. Arnold, P. Imkeller and Y. Wu}, Reduction of deterministic coupled atmosphere---ocean models to stochastic ocean models: a numerical case study of the Lorenz-Maas system, {\it Dyn. Syst.}, {\bf 18} (2003) 295--350.}

\bibitem{Yeong}{\textsc{H. Yeong, R. Beeson, N. Namachchivaya and N. Perkowski}, Particle filters with nudging in multiscale chaotic systems: with application to the Lorenz '96 atmospheric model, {\it J. Nonlinear Sci.}, {\bf 30} (2020) 1519--1552.}

\bibitem{Gao1}{\textsc{P. Gao}, Averaging principle for multiscale stochastic fractional Schr\"{o}dinger--Korteweg-de Vries system, {\it J. Stat. Phys.}, {\bf 181} (2020) 1781--1816.}

\bibitem{Gao2}{\textsc{P. Gao}, Averaging principle for multiscale nonautonomous random 2D Navier-Stokes system, {\it J. Funct. Anal.}, {\bf 285} (2023) Paper No. 110036, 48.}

\bibitem{Gao3}{\textsc{P. Gao}, Averaging principles for multiscale stochastic Cahn-Hilliard system, {\it J. Math. Phys.}, {\bf 65} (2024) Paper No. 052702, 24.}

\bibitem{Yeyu}{\textsc{Y. Zhang, L. Smith and S. Stechmann}, Fast-wave averaging with phase changes: asymptotics and application to moist atmospheric dynamics, {\it J. Nonlinear Sci.}, {\bf 31} (2021) Paper No. 38, 46.}

\bibitem{Lihuai1}{\textsc{L. Du and T. Zhang}, The global existence and averaging theorem for the strong solution of the stochastic Boussinesq equations with the low Froude number, {\it J. Math. Fluid Mech.}, {\bf 24} (2022) Paper No. 32, 20.}

\bibitem{Brzezniak}{\textsc{Z. Brze\'{z}niak, W. Liu and J. Zhu}, Strong solutions for SPDE with locally monotone coefficients driven by L\'{e}vy noise, {\it Nonlinear Anal. Real World Appl.}, {\bf 17} (2014) 283--310.}

\bibitem{Justin}{\textsc{J. Cyr, P. Nguyen, S. Tang and R. Temam}, Review of local and global existence results for stochastic PDEs with L\'{e}vy noise, {\it Discrete Contin. Dyn. Syst.}, {\bf 40} (2020) 5639--5710.}

\bibitem{Martin}{\textsc{M. Hairer and J. C. Mattingly}, Ergodicity of the 2D Navier-Stokes equations with degenerate stochastic forcing, {\it Ann. of Math. (2)}, {\bf 164} (2006) 993--1032.}

\bibitem{Da}{\textsc{G. Da Prato and J. Zabczyk}, Stochastic equations in infinite dimensions, Second edition, Encyclopedia of Mathematics and its Applications, vol. 152, Cambridge University Press, Cambridge (2014).}

\bibitem{Cerrai1}{\textsc{S. Cerrai and M. Freidlin}, Averaging principle for a class of stochastic reaction-diffusion equations, {\it Probab. Theory Related Fields}, {\bf 144} (2009) 137--177.}

\bibitem{Gyongy}{\textsc{I. Gy\"{o}ngy and N. Krylov}, Existence of strong solutions for It\^{o}'s stochastic equations via approximations, {\it Probab. Theory Related Fields}, {\bf 105} (1996) 143--158.}

\bibitem{Dror}{\textsc{D. Givon}, Strong convergence rate for two-time-scale jump-diffusion stochastic differential systems, {\it Multiscale Model. Simul.}, {\bf 6} (2007) 577--594.}

\bibitem{Bihari}{\textsc{I. Bihari}, A generalization of a lemma of Bellman and its application to uniqueness problems of differential equations, {\it Acta Math. Acad. Sci. Hungar.}, {\bf 7} (1956) 81--94.}

\bibitem{Gou}{\textsc{Z. Gou, M. Wang and N. Huang}, Strong solutions for jump-type stochastic differential equations with non-Lipschitz coefficients, {\it Stochastics}, {\bf 92} (2020) 533--551.}
	
\end{thebibliography}

\addcontentsline{toc}{chapter}{References} %向目录中添加条目，以章的名义
%% -------------------------------------------------------------------

\end{document}